\newcommand{\Hloc}{H^1_{\textnormal{loc}}}
\newcommand{\eps}{\varepsilon}
\renewcommand{\d}{\,\mathrm{d}}
\newcommand{\mc}{\mathcal}
\newcommand{\diff}{\!\setminus\!}
\newcommand{\ellz}{{\ell_0}}
\newcommand{\ellzk}{{\ell_0^k}}
\newcommand{\nuk}{{\nu^k}}
\newcommand{\vz}{{v_0}}
\newcommand{\vzk}{{v_0^k}}
\newcommand{\vu}{{v_1}}
\newcommand{\vuk}{{v_1^k}}
\newcommand{\vzd}{{\dot{v}_0}}
\newcommand{\vzdk}{{\dot{v}_0^k}}
\newcommand{\kappak}{{\kappa^k}}
\newcommand{\zk}{{z^k}}
\newcommand{\zdk}{{\dot{z}^k}}
\newcommand{\vk}{{v^k}}
\newcommand{\vkt}{{v^k_t}}
\newcommand{\vkx}{{v^k_x}}
\newcommand{\ellk}{{\ell^k}}
\newcommand{\lambdad}{{\dot{\lambda}}}
\newcommand{\lambdak}{{\lambda^k}}
\newcommand{\lambdadk}{{\dot{\lambda}^k}}
\newcommand{\elld}{{\dot{\ell}}}
\newcommand{\elldk}{{\dot{\ell}^k}}
\newcommand{\Ak}{{A^k}}
\newcommand{\Akt}{{A^k_t}}
\newcommand{\Akx}{{A^k_x}}
\newcommand{\Hk}{{H^k}}
\newcommand{\Hkt}{{H^k_t}}
\newcommand{\varphik}{{\varphi^k}}
\newcommand{\psik}{{\psi^k}}
\newcommand{\omk}{{\omega^k}}
\newcommand{\omdk}{{\dot{\omega}^k}}
\newcommand{\omd}{{\dot{\omega}}}
\def\enne{\mathbb{N}}
\def\zeta{\mathbb{Y}}
\def\erre{\mathbb{R}}
\renewcommand{\to}{\rightarrow}
\numberwithin{equation}{section}
\newtheorem{thm}{Theorem}[section]
\newtheorem{defi}[thm]{Definition}
\newtheorem{prop}[thm]{Proposition}
\newtheorem{lemma}[thm]{Lemma}
\newtheorem{cor}[thm]{Corollary}
\theoremstyle{definition}
\newtheorem{rmk}[thm]{Remark}
\theoremstyle{remark}
\begin{document}
	
	\author{Filippo Riva}
	
	\title[Continuous dependence for a 1d debonding model]{A continuous dependence result\\for a dynamic debonding model in dimension one}
	
	\begin{abstract}
		In this paper we address the problem of continuous dependence on initial and boundary data for a one-dimensional dynamic debonding model describing a thin film peeled away from a substrate. The system underlying the process couples the (weakly damped) wave equation with a Griffith's criterion which rules the evolution of the debonded region. We show that under general convergence assumptions on the data the corresponding solutions converge to the limit one with respect to different natural topologies.
	\end{abstract}

	\maketitle
		
	{\small
		\keywords{\noindent {\bf Keywords:}
			Thin films; Dynamic debonding; Wave equation in time-dependent domains; Griffith's criterion; Continuous dependence.
		}
		\par
		\subjclass{\noindent {\bf 2010 MSC:}
			35B30, % Dependence of solutions on initial and boundary data
			35L05, % Wave equation
			35Q74, % PDEs in connection with mechanics of deformable solids
			35R35, % Free boundary problems for PDE
			70F40, % Problems with friction
			74K35. % Thin films
		}
	}

	\pagenumbering{arabic}
	
\medskip

\tableofcontents

	\section*{Introduction}	
	The interest of the physical and engineering community on dynamic debonding models involving one spatial dimension originates in the '70s from the works of Hellan \cite{Hela, Helb, Helbook}, Burridge \& Keller \cite{BurrKell} and carries on in the '90s with the ones of Freund collected in \cite{Fre90}. The importance of this kind of models relies on the fact that they possess deep similarities to the theory of dynamic crack growth based on Griffith's criterion, but at the same time they are much easier to treat, allowing an exhaustive comprehension of the involved physical processes. More recently dynamic debonding models have been resumed by several authors, see for instance Dumouchel and others \cite{DouMarCha07, DouMarCha08, LBDM12}, but only in the last few years a rigorous mathematical formulation has been adopted: we are referring to \cite{DMLazNar16, LazNarkappa, LazNarinitiation, RivNar}, in which existence and uniqueness results are stated, or to \cite{LazNarkappa, LazNar}, where the so-called quasistatic limit problem is addressed. It concerns whether or not dynamic evolutions converge to quasistatic ones (see \cite{MieRou15} for the general discussion about quasistatic or rate-independent processes) when inertia goes to zero. We also refer to \cite{Buc} and \cite{MadTom} for adhesion and debonding problems in the static and quasistatic regime.\par 
	Nevertheless we are not aware of the presence in literature of continuous dependence results for debonding models, despite the importance of the issue and despite partial achievements in this direction have already been obtained in the more complicated framework of Fracture Dynamics, see for instance \cite{Cap, DMLuc}. Therefore the aim of our paper is filling this gap, giving a positive answer to the question of continuous dependence in a general version of the one-dimensional dynamic debonding model. The result will be used in a forthcoming paper to deal with the crucial problem arising in Mechanics of the quasistatic limit in this context.\par 
	To describe the model we are going to analyse let us consider a perfectly flexible and inextensible thin film partially glued to a flat rigid substrate. In an orthogonal coordinate system $(x,y,z)$, in which the substrate is identified with the half plane $\{(x,y,z)\mid x\ge 0,\, z=0 \}$, we assume the deformation of the film at time $t\ge 0$ is parametrized by $(x,y,0)\mapsto(x+h(t,x),y,u(t,x))$, where the scalar functions $h$ and $u$ represent the horizontal and the vertical displacement, respectively. Since the second component $y$ is assumed to be constant it will be ignored in the rest of the paper; this means that the debonding process takes place in the vertical half plane $\{(x,z)\mid x\ge 0 \}$. At every time $t\ge 0$ the debonded part of the film is the segment $\{(x,0)\mid x\in[0,\ell(t)) \}$, where $\ell$ is a nondecreasing function representing the debonding front. This in particular implies that the displacement $(h(t,x),u(t,x))$ is identically zero on the half line $\{(x,0)\mid x\ge \ell(t) \}$. As in \cite{DMLazNar16} and \cite{RivNar} in this work we make the crucial assumption that $\ellz:=\ell(0)>0$, namely at the initial time $t=0$ the film is already debonded in the segment $\{(x,0)\mid x\in[0,\ellz) \}$; see instead \cite{LazNarinitiation} for the analysis of the singular case in which initially the film is completely glued to the substrate. At the endpoint $x=0$ we finally prescribe a boundary condition for the vertical displacement $u(t,0)=w(t)$. By linear approximation, inextensibility of the film provides an explicit formula for the horizontal displacement:
	\begin{equation*}
		h(t,x)=\frac 12 \int_{x}^{\ell(t)}u_x^2(t,\xi)\d\xi.
	\end{equation*}
	The vertical displacement $u$ and the debonding front $\ell$ instead solve the system:	
	\begin{subequations}\label{coupled}
		\begin{align}
			\begin{split}\label{problemu}
			&\begin{cases}
			u_{tt}(t,x)-u_{xx}(t,x)+\nu u_t(t,x)=0, \quad& t > 0 \,,\, 0<x<\ell(t),  \\
			u(t,0)=w(t), &t>0, \\
			u(t,\ell(t))=0,& t>0,\\
			u(0,x)=u_0(x),\quad&0<x<\ell_0,\\
			u_t(0,x)=u_1(x),&0<x<\ell_0,		
			\end{cases}
			\end{split}\\
			\begin{split}\nonumber
			&
			\end{split}	\\
			\begin{split}\label{energycrit}
			&+\mbox{Energy criteria satisfied by }u\mbox{ and }\ell,
			\end{split}
		\end{align}
	\end{subequations}	
	where the initial conditions $u_0$ and $u_1$ are given functions, and the parameter $\nu\ge 0$ takes into account the friction produced by air resistance.\par 
	The paper is organised as follows: in Section \ref{sec1} we first give a rigorous mathematical presentation of the debonding model and we introduce the energy criteria appearing in \eqref{energycrit} that the pair $(u,\ell)$ has to satisfy (see Griffith's criterion \eqref{Griffithcrit}). We then state the result of existence and uniqueness for solutions to problem \eqref{coupled} proved in \cite{RivNar}. Finally we present the continuous dependence problem: we consider sequences of data converging in the natural topologies to some limit data, see \eqref{conv}, and we wonder whether and in which sense the sequence of solutions to \eqref{coupled} corresponding to these sequences of data, denoted by $\{(u^k, \ellk)\}_{k\in\enne}$, converges to the solution corresponding to the limit ones, denoted by $(u,\ell)$.\par 
	Section \ref{sec2} is devoted to the analysis of the convergence of the sequence of vertical displacements $\{u^k\}_{k\in\enne}$ assuming a priori that the sequence of debonding fronts $\{\ellk\}_{k\in\enne}$ converges to $\ell$ in some suitable topology. The main outcomes of this Section are collected in \eqref{allconv}, see also Remark~\ref{auxiliar}. This is, however, a continuous dependence result for problem \eqref{problemu}, still not coupled with \eqref{energycrit}, see Remark~\ref{Remcont}.\par 
	In Section \ref{sec3} we finally state and prove our continuous dependence result for the coupled problem, see Theorem~\ref{finalthm}, showing that the convergence of the sequence of debonding fronts we postulated in Section \ref{sec2} actually happens. The strategy of the proof strongly relies on a representation formula for solutions to \eqref{problemu} proved in \cite{RivNar}, see \eqref{repformula} and \eqref{lambda}. Furthermore the argument exploits the idea used in \cite{RivNar} that a certain operator is a contraction with respect to a suitable distance, see \eqref{distance} and Propositions~\ref{estimatelprop} and \ref{estimatevprop}.
	
	\section*{Notations}
	In this Section we collect some notation and some definition that we will use several times during the paper. They have already been introduced and used in \cite{DMLazNar16} and \cite{RivNar}, so we refer to them for a wide and more complete explanation.
	\begin{rmk}
		Throughout the paper every function in $W^{1,p}(a,b)$, for $-\infty<a<b<+\infty$ and $p\in[1,+\infty]$, is always identified with its continuous representative on $[a,b]$.\par 
		Furthermore the derivative of any function of real variable is always denoted by a dot (i.e. $\dot{f}$, $\dot{\ell}$, $\dot{\varphi}$, $\dot{v}_0$), regardless of whether it is a time or a spatial derivative.
	\end{rmk}
\textbf{Geometric considerations.} Fix $\ell_0>0$ and consider a function $\ell \colon [0,+\infty)\to [\ellz,+\infty)$, which will play the role of the debonding front, satisfying:
	\begin{subequations}\label{elle}
		\begin{equation}\label{ellea}
		\ell\in C^{0,1}([0,+\infty)) ,
		\end{equation}
		\begin{equation}\label{elleb}
		\ell(0)=\ell_0\mbox{ and } 0\le\dot\ell(t)< 1 \mbox{ for a.e. }t\in [0,+\infty).
		\end{equation}
	\end{subequations}	
	Given such a function we define the sets:
	\begin{align*}
	&\Omega := \{ (t,x)\mid t>0\,,\, 0 < x < \ell(t)\},\\
	&\Omega'_1 \,:=\{ (t,x)\in\Omega\mid t\le x\mbox{ and }t+x\le \ell_0\},\\		
	&\Omega'_2 \,:=\{ (t,x)\in\Omega\mid t>x\mbox{ and }t+x<\ell_0\},\\	
	&\Omega'_3 \,:=\{ (t,x)\in\Omega\mid t<x\mbox{ and }t+x>\ell_0\},\\
	&\Omega'\,\, :=	\,\,\Omega'_1\cup\Omega'_2\cup\Omega'_3	,\\
	&\Omega_T :=\{ (t,x)\in\Omega\mid t<T\},\\
	&\Omega'_T:=\{ (t,x)\in\Omega'\mid t<T\},\\
	&\!(\Omega'_i)_T:=\{ (t,x)\in\Omega'_i\mid t<T\},\mbox{ for }i=1,2,3,
	\end{align*}
	Moreover, for $t\in[0,+\infty)$, we introduce the functions:
	\begin{equation}\label{phipsidef}
	\varphi(t):= t {-} \ell(t) \,\mbox{, }\quad \psi(t):=t{+}\ell(t),
	\end{equation} 
	and we define:
	\begin{equation}\label{omegadef}
	\omega\colon [\ell_0,+\infty) \to [-\ell_0,+\infty) , \quad
	\omega(t):=\varphi\circ\psi^{-1}(t).
	\end{equation}
	\begin{rmk}
		By \eqref{elleb} $\psi$ turns out to be a bilipschitz function ($1\le \dot{\psi}< 2$), while $\varphi$ turns out to be Lipschitz with $0<\dot\varphi(t)\le 1$ for a.e. $t\in[0,+\infty)$. Hence $\varphi$ is invertible with absolutely continuous inverse. As a byproduct we get that $\omega$ is Lipschitz too and for a.e. $t\in[\ell_0,+\infty)$  it holds true:
		\begin{equation}\label{omegadot}
		0<\dot\omega(t)=\frac{1-\dot\ell(\psi^{-1}(t))}{1+\dot\ell(\psi^{-1}(t))}\le 1.
		\end{equation}
		So $\omega$ is invertible with absolutely continuous inverse too.
	\end{rmk}\noindent
	For $(t,x)\in\Omega'$ we also introduce the set: 
		\begin{equation}\label{rettangoli}
	R(t,x)= \{(\tau,\sigma) \in \Omega' \mid 0 < \tau < t,\,\,\, \gamma_1(\tau;t,x) < \sigma < \gamma_2(\tau;t,x) \},
	\end{equation}
	where
	\begin{equation}\label{bordi}
	\begin{aligned}		
	&\gamma_1(\tau;t,x) = \begin{cases}
	x{-}t{+}\tau, & \mbox{if } (t,x)\in\Omega_1', \\
	|x{-}t{+}\tau|, & \mbox{if } (t,x)\in\Omega_2',\\
	x{-}t{+}\tau, & \mbox{if } (t,x)\in\Omega_3',
	\end{cases}\,\\
	&\gamma_2(\tau;t,x) = \begin{cases}
	x{+}t{-}\tau, & \mbox{if } (t,x)\in\Omega_1',\\
	x{+}t{-}\tau, &\mbox{if } (t,x)\in\Omega_2',\\
	\tau{-}\omega(t{+}x), & \mbox{if } (t,x)\in\Omega_3' \mbox{ and } \tau \le \psi^{-1}(t{+}x),\\
	x{+}t{-}\tau, &  \mbox{if } (t,x)\in\Omega_3' \mbox{ and } \tau > \psi^{-1}(t{+}x),
	\end{cases}
	\end{aligned}
	\end{equation}
	\begin{figure}
		\subfloat{\includegraphics[scale=.4]{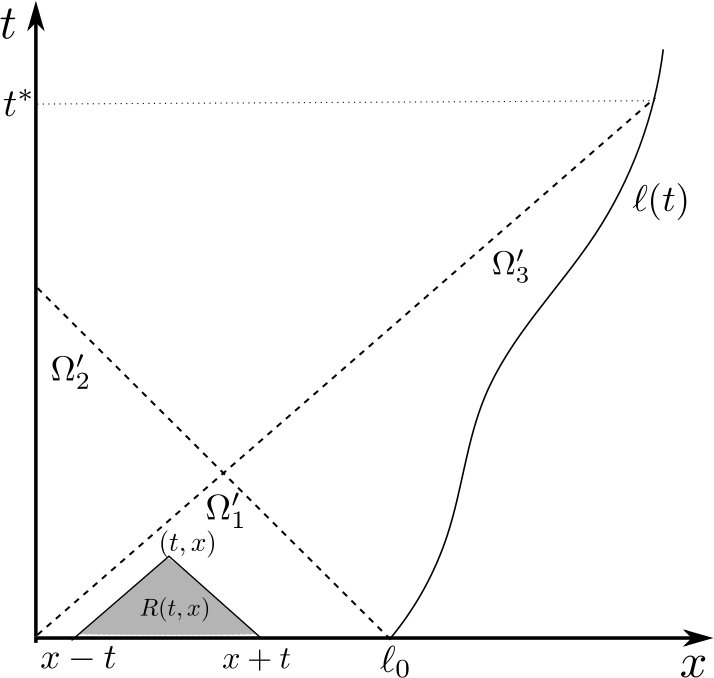}}\quad
		\subfloat{\includegraphics[scale=.4]{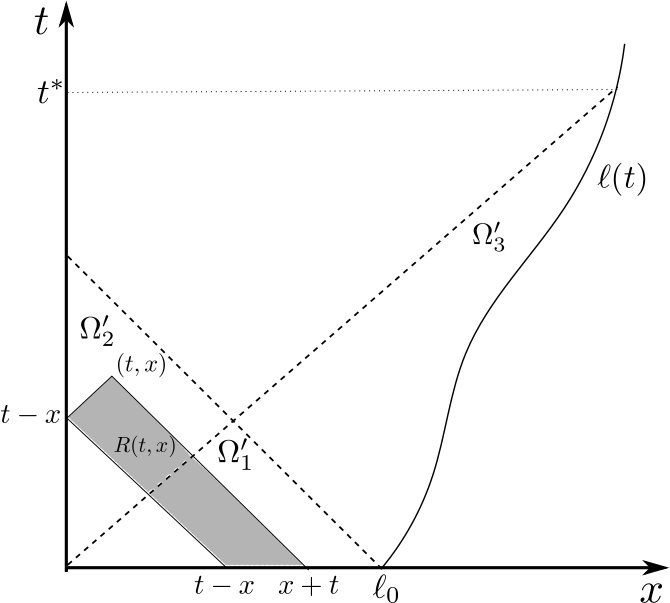}}\quad
		\subfloat{\includegraphics[scale=.4]{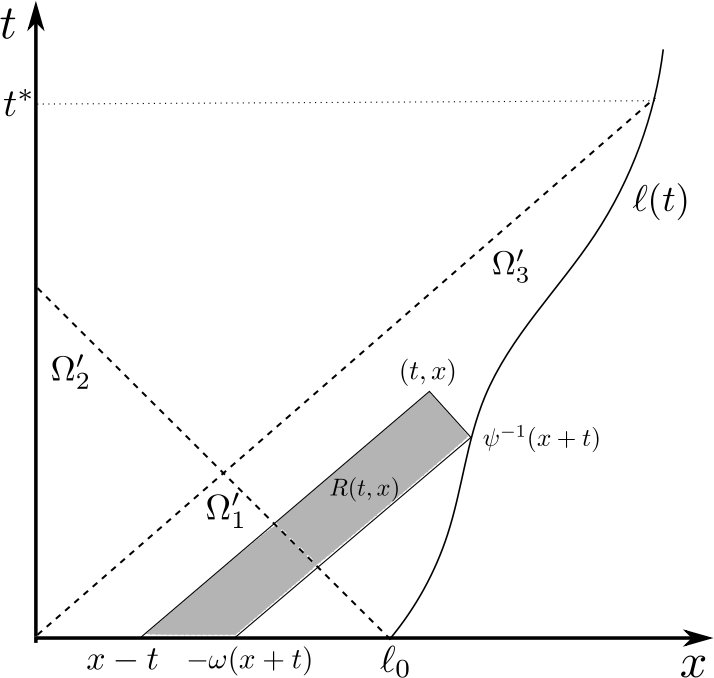}}
		\caption{The set $R(t,x)$ in the three possible cases.}\label{Figrettangoli}
	\end{figure}		
	are the left and the right boundary of $R(t,x)$, respectively. See Figure~\ref{Figrettangoli}.
	\begin{rmk}
		We warn the reader that, for the sake of clarity, during the whole paper we shall not write $\Omega_\ell$, $\Omega'_\ell$, $R_\ell(t,x)$, $\varphi_\ell$ or $\omega_\ell$, even if all of the sets and the functions introduced in this Section depend explicitely on the function $\ell$.
	\end{rmk}
	\textbf{Mathematical objects.} Finally let us define the spaces:
	\begin{gather*}
	\widetilde{H}{^1}(\Omega) := \{u \in \Hloc(\Omega)\mid u \in H^1(\Omega_T) \mbox{ for every } T>0\},\\
	\widetilde{H}{^1}(\Omega') := \{u \in \Hloc(\Omega')\mid u \in H^1(\Omega'_T) \mbox{ for every } T>0\},\\
	\widetilde H^1(0,+\infty) :=\{u \in \Hloc(0,+\infty) \mid u\in H^1(0,T) \, \text{ for every } T>0\},\\
	\widetilde C^{0,1} ([\ellz,+\infty)) := \{ u \in C^0([\ellz,+\infty)) \mid u \in C^{0,1}([\ellz,X]) \text{ for every } X>\ellz \} .
	\end{gather*}

	\section{Statement of the problem}\label{sec1}
	\subsection{The debonding model}
	In this Section we make the definition of solution to \eqref{coupled} precise. We fix $\nu\geq 0$, $\ellz>0$ and we assume that the boundary and initial data satisfy:
\begin{subequations}\label{bdryregularity}
	\begin{equation}	
	w \in \widetilde  H^1(0,+\infty),	
	\end{equation}
	\begin{equation}\label{bdry2}
	u_0 \in H^1(0,\ell_0) , \quad u_1 \in L^2(0,\ell_0) .
	\end{equation}
	\begin{equation}\label{compatibilitycond}
		u_0(0)=w(0),\quad u_0(\ellz)=0.
	\end{equation}
\end{subequations}	
	To fix the ideas let us assume for the moment that the debonding front $\ell \colon [0,+\infty) \to [\ell_0,+\infty)$ is assigned and it satisfies \eqref{elle}.
	\begin{defi}
		\label{sol}
		We say that a function $u \in \widetilde{H}{^1}(\Omega)$ (resp.\ in $H^1(\Omega_T)$) is a solution of \eqref{problemu} if $u_{tt}-u_{xx}+\nu u_{t}=0$  holds in the sense of distributions in $\Omega$ (resp.\ in $\Omega_T$), the boundary conditions are intended in the sense of traces and the initial conditions $u_0$ and $u_1$ are satisfied in the sense of $L^2(0,\ell_0)$ and $H^{-1}(0,\ell_0)$, respectively.
	\end{defi}
\begin{rmk}
	The definition is well posed, since for a solution $u\in H^1(\Omega_T)$ we have that $u_t$ and $u_x$  belong to $L^2(0,T;L^2(0,\ell_0))$; this implies that $u_t$ and $u_{xx}$ are in $L^2(0,T;H^{-1}(0,\ell_0))$ and so by the wave equation $u_{tt}\in L^2(0,T;H^{-1}(0,\ell_0))$. Therefore $u_t\in H^1(0,T;H^{-1}(0,\ell_0))\subseteq C^0([0,T];H^{-1}(0,\ell_0))$ (see also~\cite{DMLazNar16}).
\end{rmk}
	To establish the rules governing the evolution of the debonding front $\ell$ we need to introduce for $t\in[0,+\infty)$ the internal energy of a solution $u$:
	\begin{equation*}
	\mathcal{E}(t):=\frac 12 \int_{0}^{\ell(t)}\left(u_t^2(t,x)+u_x^2(t,x)\right)\d x,
	\end{equation*}
	the energy dissipated by the friction of air:
	\begin{equation*}
	\mathcal{A}(t):=\nu\int_{0}^{t}\int_{0}^{\ell(\tau)}u_t^2(\tau,\sigma)\d\sigma\d\tau,
	\end{equation*}
	and the work of the external loading:
	\begin{equation*}
		\mc W(t):=-\int_{0}^{t}\dot{w}(s)u_x(s,0)\d s.
	\end{equation*}
	\begin{rmk}
		As proved in \cite{RivNar}, the internal energy $\mc E(t)$ is well defined for every $t\in[0,+\infty)$ since $u$ turns out to be in $C^0([0,+\infty);H^1(0,+\infty))$ and in $ C^1([0,+\infty);L^2(0,+\infty))$; we present this result in Theorem~\ref{exuniq}. The expression $u_x(s,0)$ makes instead sense due to the representation formula for solutions to \eqref{problemu} introduced and used in \cite{RivNar}, see \eqref{vxpunto} and the subsequent discussion.
	\end{rmk}
	Moreover we assume that the glue between the substrate and the film behaves in a brittle fashion, thus the energy dissipated during the debonding process in the time interval $[0,t]$ is given by the formula
	\begin{equation}
	\int_{\ell_0}^{\ell(t)} \kappa(x) \d x,
	\end{equation}
	where $\kappa \colon [\ell_0,+\infty) \to (0,+\infty)$ is a measurable function representing the local toughness of the glue.\par 
	In our model we postulate that the debonding front $\ell$ has to evolve following two principles, which will replace the vague condition \eqref{energycrit}. The first one, called energy-dissipation balance, simply states that during the evolution the following equality between internal energy, dissipated energy and work of the external loading has to be satisfied:
	\begin{equation}
	\label{edb}
	\mc E(t)+\mc A(t)+\int_{\ell_0}^{\ell(t)} \kappa(x) \d x=\mc E(0)+\mc W(t),\quad\quad\text{for every }t\in[0,+\infty).
	\end{equation}
	The second one, called maximum dissipation principle, states that $\ell$ has to grow at the maximum speed which is consistent with the energy-dissipation balance (see also \cite{Lar10}):
	\begin{equation}
		\label{mdp}
		\dot \ell(t) = \max\{\alpha\in[0,1) \mid \kappa(\ell(t))\alpha = G_\alpha(t) \alpha\},\quad\quad\text{for a.e. } t\in[0,+\infty),
	\end{equation}
	where $G_\alpha(t)$ is the so-called dynamic energy release rate at speed $\alpha$, a quantity which measures the amount of energy spent by the debonding process. It is obtained as a sort of partial derivative of the total energy with respect to the elongation of the debonding front; we refer to \cite{DMLazNar16}, \cite{Fre90} or \cite{RivNar} for more details, since in this work we do not need its rigorous definition.\par 
	We only want to mention that in our context it has the expression:
	\begin{equation}\label{Galpha}
		G_\alpha(t)=\frac{1-\alpha}{1+\alpha}G_0(t),\quad \text{for a.e. }t\in[0,+\infty),
\end{equation}
	where $G_0$ can be explicitely written as:
	\begin{equation}\label{Gzero}
		G_0(t)=\frac 12 \left[\dot{u}_0(\ell(t){-}t)-u_1(\ell(t){-}t)+\nu\int_{0}^{t}u_t(\tau,\tau{-}t{+}\ell(t))\d\tau\right]^2,\quad \text{for a.e. }t\in\left[0,\frac{\ell_0}{2}\right),
\end{equation}
	and then extended to the whole $[0,+\infty)$ via a suitable procedure. It is also worth recalling that if $\alpha=\elld(t)$ one can write:
	\begin{equation}\label{Griffregular}
		G_{\elld(t)}(t)=\frac 12(1-\elld(t)^2)u_x(t,\ell(t))^2,\quad \text{for a.e. }t\in[0,+\infty).
\end{equation}
	In \cite{DMLazNar16} and \cite{RivNar} it has been shown that the two principles \eqref{edb} and \eqref{mdp} together are equivalent to the following system, called Griffith's criterion:
	\begin{equation}\label{Griffithcrit}
	\begin{cases}
	\,\,0\le\dot{\ell}(t)<1,\\ 
	\,\,G_{\dot\ell(t)}(t)\le \kappa(\ell(t)),\\ 
	\left[ G_{\dot\ell(t)}(t)-\kappa(\ell(t))\right]\dot{\ell}(t)=0, 
	\end{cases}\quad\quad\quad\quad \text{for a.e. } t\in[0,+\infty),
	\end{equation}
	which in turn is equivalent to an ordinary differential equations for the debonding front $\ell$:
	\begin{equation}\label{equation}
	\dot{\ell}(t)=\max\left\{\frac{G_0(t)-\kappa(\ell(t))}{G_0(t)+\kappa(\ell(t))},0\right\},\quad\quad\quad\quad \text{for a.e. } t\in[0,+\infty).
	\end{equation}
	\begin{rmk}
		The dynamic energy release rate $G_\alpha (t)$ depends on the solution $u$ of problem \eqref{problemu} and on the debonding front $\ell$ itself, as the reader can see from \eqref{Galpha} and \eqref{Gzero}, so equation \eqref{equation} only makes sense if coupled with problem \eqref{problemu}.
	\end{rmk}\noindent
	We are now in the position to give the following Definition:
	\begin{defi}\label{solcoup}
		Assume $\ell\colon[0,+\infty)\to[\ell_0,+\infty)$ satisfies \eqref{elle}; let $u\colon[0,+\infty)^2\to\erre$ be such that $u\in\widetilde{H}^1(\Omega)$ (resp. in $H^1(\Omega_T)$). We say that the pair $(u,\ell)$ is a solution of the coupled problem (resp. in $[0,T]$) if:
		\begin{itemize}
			\item[i)] $u$ solves problem \eqref{problemu} in $\Omega$ (resp. in $\Omega_T)$ in the sense of Definition~\ref{sol},
			\item[ii)] $u\equiv 0$ outside $\overline{\Omega}$ (resp. in $([0,T]\!\times\![0,+\infty))\setminus\overline{\Omega_T}$),
			\item[iii)]  $(u,\ell)$ satisfies Griffith's criterion \eqref{Griffithcrit} for a.e. $t\in[0,+\infty)$ (resp. for a.e. $t\in[0,T]$).
		\end{itemize}	
	\end{defi} \noindent
	In \cite{RivNar} it has been proved that under suitable assumptions on the toughness $\kappa$ coupled problem \eqref{problemu}\&\eqref{Griffithcrit} admits a unique solution. The result is the following:
		\begin{thm}\label{exuniq}
		Fix $\nu\ge 0$, $\ell_0>0$ and consider $u_0$, $u_1$ and $w$ satisfying \eqref{bdryregularity}. Assume that the measurable function $\kappa\colon[\ell_0,+\infty)\to(0,+\infty) $ fulfills the following property:
		\begin{equation}\label{kappaeps}
		\mbox{for every }x\in[\ell_0,+\infty)\text{ there exists }\varepsilon=\varepsilon(x)>0\text{ such that } \kappa\in C^{0,1}([x,x+\varepsilon]).
		\end{equation}
		Then there exists a unique pair $(u,\ell)$ solution of the coupled problem in the sense of Definition~\ref{solcoup}. Moreover $u$ has a continuous representative on $\overline{\Omega}$ and it holds:
		\begin{equation*}
		u\in C^0([0,+\infty);H^1(0,+\infty))\cap C^1([0,+\infty);L^2(0,+\infty)). 
		\end{equation*}
	\end{thm}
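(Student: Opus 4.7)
The plan is a fixed-point argument that decouples the wave equation from the Griffith evolution. For any admissible debonding front $\ell$ satisfying \eqref{elle}, problem \eqref{problemu} is linear and can be solved via an explicit d'Alembert-type representation formula in characteristic coordinates, producing a solution map $\ell\mapsto u[\ell]$. Griffith's criterion then collapses to the nonlinear ODE \eqref{equation} for $\ell$, whose right-hand side depends on $\ell$ only through $G_0(t)$ computed from $u[\ell]$. A solution of the coupled problem is precisely a fixed point of the associated operator $\Phi$ that sends $\ell$ to the unique solution of \eqref{equation} when $G_0$ is computed from $u[\ell]$.

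\textbf{Construction of $u[\ell]$.} I would build $u[\ell]$ by propagating initial and boundary data along the characteristics $t\pm x=\mathrm{const}$ and reflecting at the moving curve $x=\ell(t)$. The reflection is encoded by the map $\omega$ from \eqref{omegadef}, which is Lipschitz with $0<\dot\omega\le 1$ because $\dot\ell<1$, so the recursion is well defined on the whole $\Omega$. The weak damping $\nu u_t$ is absorbed via a Duhamel correction that depends continuously on $\ell$. Energy estimates on the strips $(\Omega'_i)_T$ introduced in the Notations, combined with the compatibility condition \eqref{compatibilitycond}, yield the announced regularity $u\in C^0([0,+\infty);H^1(0,+\infty))\cap C^1([0,+\infty);L^2(0,+\infty))$ and furnish a meaningful trace $u_x(\cdot,0)$ for the external work term.

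\textbf{Contraction, iteration, main obstacle.} On the initial triangle $\{t+x<\ell_0\}$ formula \eqref{Gzero} shows that $G_0(t)$ depends on $\ell$ only through the endpoint $\ell(t){-}t$ of a characteristic and through an integral of $u_t$ along it; coupled with the local Lipschitz hypothesis \eqref{kappaeps} on $\kappa$, this makes the right-hand side of \eqref{equation} Lipschitz in $\ell$ in a suitable sense. I would then prove that $\Phi$ is a contraction on a short interval $[0,T_*]$ with respect to the distance \eqref{distance} later introduced in Section~\ref{sec3}, and invoke Banach's fixed-point theorem to obtain a unique local solution. Since $T_*$ can be chosen to depend only on bounds that propagate along characteristics, iterating on consecutive intervals pieces together a unique global pair $(u,\ell)$. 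The main technical hurdle is the contraction estimate itself: perturbing $\ell$ to $\tilde\ell$ shifts where characteristics meet the moving boundary and thus affects both the reflection map $\omega$ and the trace of $u_x$ at $x=\ell(t)$ entering $G_0$; these quantities are not Lipschitz in $\ell$ for the naive $C^0$ norm, and the whole point of the distance \eqref{distance} is precisely to absorb this mismatch so that $\Phi$ becomes strictly contractive for $T_*$ small enough.
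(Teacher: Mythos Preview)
The paper does not prove Theorem~\ref{exuniq} itself; it is quoted from \cite{RivNar}, and the paragraph following the statement only sketches the strategy used there. That strategy is in spirit the same fixed-point/contraction scheme you outline, so your overall plan is sound. Two implementation points, however, differ from what the paper (and \cite{RivNar}) actually do and are worth flagging.

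First, the damping is not handled by a generic ``Duhamel correction on $u$'' but by the change of unknown $v(t,x)=e^{\nu t/2}u(t,x)$, which turns $u_{tt}-u_{xx}+\nu u_t=0$ into $v_{tt}-v_{xx}-\tfrac{\nu^2}{4}v=0$. This is not cosmetic: in the formula \eqref{Gzero} for $G_0$ the Duhamel term is a line integral of $u_t$ along a characteristic, whereas after the transformation the analogous quantity $\Gamma_{v,\ell}$ in \eqref{Gamma} involves only a line integral of $v$ itself. Since $v$ is continuous and uniformly bounded (Proposition~\ref{bdd}) while $u_t$ is merely $L^2$, this substitution is exactly what makes the contraction estimates tractable.

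Second, the fixed point in \cite{RivNar} is set up on the \emph{pair} $(v,\lambda)$, with $\lambda=\varphi^{-1}$, via the coupled system \eqref{repformula}--\eqref{lambda}, and the contraction is established for the product distance \eqref{distance}. Your write-up instead defines $\Phi$ as a map on $\ell$ alone (using $u[\ell]$ as an intermediate object) but then invokes the distance \eqref{distance}, which measures pairs. This is an internal inconsistency: either you run the contraction on $\ell$ and must produce a Lipschitz estimate for $\ell\mapsto u[\ell]$ separately, or you follow the paper and contract on $(v,\lambda)$ jointly. The latter is what the distance \eqref{distance} is designed for, and it is also what makes the ``main technical hurdle'' you identify disappear cleanly.
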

	The strategy of the proof relies in a representation formula (Duhamel's principle) valid for small times for the solution $u$ of \eqref{problemu} and for an auxiliary function $v$ defined as $v(t,x):=e^{\nu t/2}u(t,x)$. Since later on we will widely exploit it, we now want to say something more about this formula: to present it we first introduce the boundary and initial data of $v$, namely
	\begin{equation}\label{datav}
	\begin{gathered}
	z(t)=e^{\nu t/2}w(t),\\
	v_0(x)=u_0(x)\quad\text{and}\quad v_1(x)=u_1(x)+\frac{\nu}{2}u_0(x).
	\end{gathered}
	\end{equation}
	\begin{rmk}
		The functions $z$, $v_0$ and $v_1$ satisfy \eqref{bdryregularity} if and only if $w$, $u_0$ and $u_1$ do the same.
	\end{rmk}\noindent
	Then we recall that $v$ solves (in the sense of Definition~\ref{sol}) the following problem:
		\begin{equation}
		\label{problemv}
		\begin{cases}
		v_{tt}(t,x)-v_{xx}(t,x)-\displaystyle\frac{\nu^2}{4}v(t,x)=0, \quad& t > 0 \,,\, 0<x<\ell(t),  \\
		v(t,0)=z(t), &t>0, \\
		v(t,\ell(t))=0,& t>0,\\
		v(0,x)=v_0(x),\quad&0<x<\ell_0,\\
		v_t(0,x)=v_1(x),&0<x<\ell_0.
		\end{cases}
		\end{equation}	
	Thanks to the fact that $v$ solves \eqref{problemv}, in \cite{RivNar} it has been shown that, given $T<\frac{\ellz}{2}$, the pair $(u,\ell)$ is a solution of the coupled problem in $[0,T]$ if and only if the pair $(v,\ell)$ satisfies:
	\begin{equation}\label{repformula}
		\begin{cases}
		v(t,x)=A(t,x)+\displaystyle\frac{\nu^2}{8}\iint_{R(t,x)}v(\tau,\sigma)\d\sigma\d \tau, &\mbox{for every }(t,x)\in\overline{\Omega_T},\\
		\ell(t)=\ellz+\displaystyle\int_{0}^{t}\max\left\{\Gamma_{v,\ell}(s),0\right\}\d s,&\mbox{for every } t\in[0,T].
		\end{cases}
	\end{equation}
	where $R(t,x)$ is as in \eqref{rettangoli}, and the functions $A$ and $\Gamma_{v,\ell}$ are defined as follows:
	\begin{equation}\label{A}
	A(t,x)=\begin{cases}\displaystyle
	\frac 12 v_0(x{-}t)+\frac 12 v_0(x{+}t)+\frac 12 \int_{x{-}t}^{x{+}t}v_1(s) \d s , & \mbox{if } (t,x)\in\Omega_1',\\
	\displaystyle z(t{-}x)-\frac 12 v_0(t{-}x)+\frac 12 v_0(t{+}x)+\frac 12 \int_{t{-}x}^{t{+}x}v_1(s) \d s, & \mbox{if } (t,x)\in\Omega_2',\\
	\displaystyle\frac 12 v_0(x{-}t)-\frac 12 v_0({-}\omega(x{+}t))+\frac 12 \int_{x{-}t}^{{-}\omega(x{+}t)}v_1(s) \d s, & \mbox{if } (t,x)\in\Omega_3',
	\end{cases}
	\end{equation}
and 
	\begin{equation}\label{Gamma}
	\Gamma_{v,\ell} (t)=\frac{\left[\dot{v}_0(\ell(t){-}t)-v_1(\ell(t){-}t)-\frac{\nu^2}{4}\int_{0}^{t}v(\tau,\tau{-}t{+}\ell(t))\d\tau\right]^2-2e^{\nu t}\kappa(\ell(t))}{\left[\dot{v}_0(\ell(t){-}t)-v_1(\ell(t){-}t)-\frac{\nu^2}{4}\int_{0}^{t}v(\tau,\tau{-}t{+}\ell(t))\d\tau\right]^2+2e^{\nu t}\kappa(\ell(t))}.
	\end{equation}
\begin{rmk}
	The letter $A$ in \eqref{A} stands for D'Alembert, indeed it is the solution of the undamped wave equation with data $v_0$, $v_1$ and $z$ and it can be recovered by an adaptation to our time-varying domain setting of the classical D'Alembert formula. The expression for $\Gamma_{v,\ell}$ in \eqref{Gamma} is instead obtained from \eqref{equation} and from the expression of $G_0(t)$ \eqref{Gzero} by rewriting it in terms of the auxiliary function $v$; see \cite{RivNar}, Subsection 3.2.
\end{rmk}	
We want to recall that, as proved in \cite{RivNar}, Lemmas~1.10 and 1.11, the function $A$ and the integral term
\begin{equation}\label{Acca}
	H(t,x):=\iint_{R(t,x)}v(\tau,\sigma)\d\sigma\d \tau,
\end{equation}
are both continuous on $\overline{\Omega'}$, they belong to $\widetilde{H}^1(\Omega')$ and furthermore, setting them to be identically zero outside $\overline{\Omega}$, they belong to $C^0([0,\frac{\ell_0}{2}];H^1(0,+\infty))$ and to $ C^1([0,\frac{\ell_0}{2}];L^2(0,+\infty))$. Moreover explicit expressions for the partial derivatives of $H$, valid for every $t\in\left[0,\frac{\ell_0}{2}\right]$ and for a.e. $x\in(0,\ell(t))$, are:
		\begin{subequations}\label{Hder}
			\begin{equation}\label{Hdert}
			\!\!H_t(t,x)=\begin{cases}
			\displaystyle\int_{0}^{t}v(\tau,x{+}t{-}\tau)\d\tau+\displaystyle\int_{0}^{t}v(\tau,x{-}t{+}\tau)\d\tau,  &\Omega_1', \\
			\displaystyle\int_{0}^{t}v(\tau,x{+}t{-}\tau)\d\tau-	\displaystyle\int_{0}^{t{-}x}v(\tau,t{-}x{-}\tau)\d\tau+\displaystyle\int_{t{-}x}^{t}v(\tau,x{-}t{+}\tau)\d\tau, & \Omega_2',\\
			\displaystyle\int_{0}^{t}\!\!\!v(\tau,x{-}t{+}\tau)\d\tau{-}\dot\omega(x{+}t)\!\!\displaystyle\int_{0}^{\psi^{-1}(x{+}t)}\!\!\!\!\!\!\!\!\!\!\!\!\!\!\!\!\!\!\!\!\!v(\tau,\tau{-}\omega(x{+}t))\d\tau{+}\!\!\displaystyle\int_{\psi^{-1}(x{+}t)}^{t}\!\!\!\!\!\!\!\!\!\!\!\!\!\!\!\!\!\!\!\!v(\tau,x{+}t{-}\tau)\d\tau, &\Omega_3',
			\end{cases}
			\end{equation}
			\begin{equation}\label{Hderx}
			\!\!H_x(t,x)=\begin{cases}
			\displaystyle\int_{0}^{t}v(\tau,x{+}t{-}\tau)\d\tau-\displaystyle\int_{0}^{t}v(\tau,x{-}t{+}\tau)\d\tau, & \Omega_1', \\
			\displaystyle\int_{0}^{t}v(\tau,x{+}t{-}\tau)\d\tau+	\displaystyle\int_{0}^{t{-}x}v(\tau,t{-}x{-}\tau)\d\tau-\displaystyle\int_{t{-}x}^{t}v(\tau,x{-}t{+}\tau)\d\tau, & \Omega_2',\\
			\!\!-\!\!\displaystyle\int_{0}^{t}\!\!\!\!\!v(\tau,x{-}t{+}\tau)\d\tau\!-\!\dot\omega(x{+}t)\displaystyle\int_{0}^{\psi^{-1}(x{+}t)}\!\!\!\!\!\!\!\!\!\!\!\!\!\!\!\!\!\!\!\!\!\!\!v(\tau,\tau{-}\omega(x{+}t))\d\tau\!+\!\!\displaystyle\int_{\psi^{-1}(x{+}t)}^{t}\!\!\!\!\!\!\!\!\!\!\!\!\!\!\!\!\!\!\!\!\!v(\tau,x{+}t{-}\tau)\d\tau, & \Omega_3',
			\end{cases}
			\end{equation}
		\end{subequations}
	By the explicit formulas \eqref{A} and \eqref{Hderx} we deduce that for a.e $t\in\left[0,\frac{\ellz}{2}\right]$ the following equalities hold true:
		\begin{subequations}\label{boundaryder}
			\begin{equation}\label{vxpunto}
			v_x(t,0)=-\dot{z}(t)+\dot{v}_0(t)+v_1(t)+\frac{\nu^2}{4}\int_{0}^{t}v(\tau,t{-}\tau)\d\tau,
			\end{equation}
			\begin{equation}\label{vxelle}
			v_x(t,\ell(t))=\frac{1}{1+\elld(t)}\left[\vzd(\ell(t){-}t)-v_1(\ell(t){-}t)-\frac{\nu^2}{4}\int_{0}^{t}v(\tau,\tau+\ell(t){-}t)\d\tau\right].
			\end{equation}
		\end{subequations}
	Of course, by an iteration argument, this shows that the functions $v_x(\cdot,0)$ and $v_x(\cdot,\ell(\cdot))$, and thus $u_x(\cdot,0)$ and $u_x(\cdot,\ell(\cdot))$, are well-defined for almost every time.
	\begin{rmk}
		The function $A$ depends on $\ell$ via the function $\omega$ (see \eqref{phipsidef} and \eqref{omegadef}) and the function $H$ depends on $\ell$ via the set $R$ (see \eqref{rettangoli} and \eqref{bordi}) and depends on $v$ explicitely, so one should write $A_\ell$ and $H_{v,\ell}$. However in the whole paper we shall write only $A$ and $H$ to avoid too heavy notations.
	\end{rmk}
	\begin{rmk}\label{zeroext}
		As already said, in the whole paper the solution $u$ (and hence $v$) and the functions $A$ and $H$ are extended to zero outside $\Omega$.
	\end{rmk}
\subsection{Convergence assumptions on the data}\label{hypotheses}
	Now that we have precised all the notations and properties of solutions of the coupled problem \eqref{problemu}\&\eqref{Griffithcrit} we can state the issue we want to address in this paper. We start listing all the hypotheses on the limit data and on the sequences of data we will assume in the whole paper.\par
	\textbf{The limit data.} Let us fix $\nu\geq 0$, $\ellz>0$, functions $u_0$, $u_1$, $w$ satisfying \eqref{bdryregularity}, and a measurable function $\kappa\colon[\ell_0,+\infty)\to(0,+\infty) $ which belongs to $\widetilde C^{0,1} ([\ellz,+\infty))$ and so in particular it fulfills property \eqref{kappaeps}.\par 
	We extend $u_0$, $u_1$ to the whole $[0,+\infty)$ setting them to be identically zero outside $[0,\ell_0]$ (notice that by compatibility condition $u_0$ belongs to $H^1(0,+\infty)$) and we extend $\kappa$ to $[0,+\infty)$ setting $\kappa(x)=\kappa(\ellz)$ for $x\in[0,\ellz]$.\par 
	\textbf{The sequences of data.} Let us consider a sequence of positive real numbers $\{\ellzk\}_{k\in\enne}$, a sequence of non negative real numbers $\{\nuk\}_{k\in\enne}$, sequences of functions $\{u_0^k\}_{k\in\enne}$, $\{u_1^k\}_{k\in\enne}$ and $\{w^k\}_{k\in\enne}$ satisfying \eqref{bdryregularity} replacing $\ellz$ by $\ellzk$ and a sequence of functions $\{\kappak\}_{k\in\enne}$ such that $\kappak\colon [\ellzk,+\infty)\to(0,+\infty)$ belongs to $\widetilde C^{0,1} ([\ellzk,+\infty))$ for every $k\in\enne$ (and hence it fulfills property \eqref{kappaeps}, replacing $\ellz$ by $\ellzk$).\par 
	As before we extend $u_0^k$, $u_1^k$ to the whole $[0,+\infty)$ setting them to be identically zero outside $[0,\ellzk]$ and we extend $\kappak$ to $[0,+\infty)$ setting $\kappak(x)=\kappak(\ellzk)$ for $x\in[0,\ellzk]$.\par 
	\textbf{The convergence assumptions.} As $k\to +\infty$ we assume:
	\begin{subequations}\label{conv}
		\begin{equation}\label{convlnu}
			\ellzk\to\ellz\quad\mbox{ and }\quad\nuk\to\nu;
		\end{equation}
		\begin{equation} \label{convdata}
			u_0^k\to u_0 \mbox{ in } H^1(0,+\infty),\, u_1^k\to u_1\mbox{ in }L^2(0,+\infty)\mbox{ and } w^k\to w\mbox{ in }\widetilde  H^1(0,+\infty);
		\end{equation}
		\begin{equation}\label{convkappa}
			\kappak\to\kappa\mbox{ in }C^0([0,X]) \mbox{ for every }X>0.
		\end{equation}	
	\end{subequations}
\subsection{The main result}  
Let now $(u,\ell)$ and $(u^k,\ellk)$ be the solutions of the coupled problem given by Theorem~\ref{exuniq} corresponding to the limit data and to the $k$th term of the sequence of data, respectively. The principal result of the paper, stated in Theorem~\ref{finalthm}, affirms that under the assumptions of this first Section the following convergences hold true for every $T>0$:
	\begin{equation*}
	\begin{aligned}
	&\bullet\elldk\to\elld \mbox{ in } L^1(0,T),\mbox{ and thus }\ellk\to\ell\mbox{ uniformly in }[0,T];\\
	&\bullet u^k\to u\mbox{ uniformly in }[0,T]\times[0,+\infty);\\
	&\bullet u^k\to u\mbox{ in }H^1((0,T)\times(0,+\infty));\\
	&\bullet u^k\to u\mbox{ in }C^0([0,T];H^1(0,+\infty))\mbox{ and in } C^1([0,T];L^2(0,+\infty));\\
	&\bullet u^k_x(\cdot,0)\to u_x(\cdot,0)\mbox{ and }\sqrt{1-\elldk(\cdot)^2}u^k_x(\cdot,\ellk(\cdot))\to \sqrt{1-\elld(\cdot)^2}u_x(\cdot,\ell(\cdot))\mbox{ in }L^2(0,T).
	\end{aligned}
	\end{equation*}
	We recall that the term $\sqrt{1-\elld(\cdot)^2}u_x(\cdot,\ell(\cdot))$ is, up to the constant $1/\sqrt{2}$ and up to the sign, the square root of the dynamic energy release rate $G_{\elld(\cdot)}(\cdot)$, see \eqref{Griffregular}.
	\begin{rmk}\label{Remcont}
		If instead of considering the coupled problem, we study system \eqref{problemu} with a prescribed debonding front, then we obtain an analogous continuous dependence result. This analysis will be performed in Section~\ref{sec2}, see \eqref{allconv}, Remark~\ref{auxiliar} and also Propositions~\ref{uniform}, \ref{sobolev}, \ref{Cone} and \ref{boundary}.
	\end{rmk}\noindent
	To prove the Theorem we will exploit the sequence of auxiliary functions $\vk(t,x)=e^{\nuk t/2}u^k(t,x)$, whose boundary and initial data are the functions $\vzk$, $\vuk$ and $\zk$ given by \eqref{datav}. We recall that for $T<\frac \ellz 2$ they can be expressed using representation formula \eqref{repformula} as
	\begin{equation}\label{duhamv}
		\vk(t,x)=\Ak(t,x)+\frac{(\nuk)^2}{8}\Hk(t,x),\quad\quad\mbox{ for every }(t,x)\in[0,T]\times[0,+\infty),
	\end{equation}
	where the function $\Ak$ is as in \eqref{A} with the obvious changes, while $H^k(t,x)\!=\displaystyle\!\!\!\iint_{R^k(t,x)}\!\!\!\!\!\!\!\!\!\!\!\!\!\!\!\vk(\tau,\sigma)\d \sigma\d\tau$. As stressed in Remark~\ref{zeroext} they both are extended to zero outside $\overline{\Omega^k}$.
	\begin{rmk}
		By \eqref{datav} it is easy to see that convergence hypotheses \eqref{convlnu} and \eqref{convdata} yield the same kind of convergence for the functions $\vzk$, $\vuk$ and $\zk$.
	\end{rmk}
	In the next two Sections we analyse the convergence of the pair $(\vk,\ellk)$ instead of the one of the pair $(u^k,\ellk)$. Indeed the transformed pair $(\vk,\ellk)$ is easier than $(u^k,\ellk)$ to handle with, since in \eqref{Gamma} and \eqref{Acca} inside the integral it appears the function itself, and not its time derivative (see \eqref{Gzero}). We are able to prove that the convergences listed just above hold true for the auxiliary function $v^k$, and thus, since it is linked to $u^k$ via the equality $\vk(t,x)=e^{\nuk t/2}u^k(t,x)$, the result is easily transferred to the solution $u^k$ of the coupled problem.
	\begin{rmk}[\textbf{Notation}]
		From now on during all the estimates the symbol $C$ is used to denote a constant, which may change from line to line, which does not depend on $k$. The symbol $\eps^k$ is instead used to denote the $k$th term of a generic infinitesimal sequence.
	\end{rmk}

	\section{A priori convergence of the debonding front}\label{sec2}
	In this Section we prove that if we assume a priori the validity of certain suitable convergence (uniform and in $W^{1,1}$) on the sequence of debonding fronts $\{\ellk\}_{k\in\enne}$ in a time interval $[0,T]$, then the sequence of auxiliary functions $\{\vk\}_{k\in\enne}$ converges to $v$ in the natural spaces. First of all we prove an equiboundedness result for the sequence $\{\vk\}_{k\in\enne}$:
	\begin{prop}
		\label{bdd}
		Assume \eqref{convlnu}, \eqref{convdata} and let us denote by $N$ the maximum value of $\nuk$. If $T< \min\left\{\frac{\ellz}{2},\frac{2}{N^2\ellz}\right\}$, then the functions $\vk$ are uniformly bounded in $C^0([0,T]\times[0,+\infty))$.
	\end{prop}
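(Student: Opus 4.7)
The plan is to exploit the Duhamel-type representation \eqref{duhamv}, which writes $\vk$ as the sum of the D'Alembert-like term $\Ak$ and a Volterra perturbation $\frac{(\nuk)^2}{8}\iint_{R^k(t,x)}\vk(\tau,\sigma)\d\sigma\d\tau$. A uniform bound then follows once we establish: (i) that $\|\Ak\|_{C^0([0,T]\times[0,+\infty))}$ is bounded uniformly in $k$, and (ii) that the factor $(\nuk)^2|R^k(t,x)|/8$ is bounded by some $q<1$ uniformly in $k$ and in $(t,x)\in[0,T]\times[0,+\infty)$.

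To prove (i), I would inspect each branch of the piecewise formula \eqref{A}. The Sobolev embedding $H^1(0,+\infty)\hookrightarrow C^0([0,+\infty))$ combined with the convergences \eqref{convdata} yields uniform $C^0$-bounds on $\{\vzk\}$ and on the restrictions of $\{\zk\}$ to $[0,T]$, while the integral terms involving $\vuk$ are controlled via Cauchy--Schwarz over intervals of length at most $2T$, using the uniform $L^2$-bound on $\{\vuk\}$ coming from \eqref{convdata}. The argument $-\omk(t{+}x)$ appearing in the $\Omega_3'$-piece lies in $[-\ellzk,+\infty)$ by definition \eqref{omegadef} of $\omk$, so the $C^0$-bound on $\vzk$ (extended by zero outside $[0,\ellzk]$) still applies.

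For (ii), a case-by-case look at \eqref{rettangoli}--\eqref{bordi} shows that for every $\tau\in(0,t)$ the cross-section $\{\sigma:(\tau,\sigma)\in R^k(t,x)\}$ is an interval of length at most $\ellzk$: in $\Omega_1'$ and $\Omega_2'$ this follows straightforwardly from $t{+}x<\ellzk$ together with the relations between $t$ and $x$, while in $\Omega_3'$ it is a direct consequence of the inclusion $\omk([\ellzk,+\infty))\subset[-\ellzk,+\infty)$. Integrating over $\tau\in(0,t)$ then yields $|R^k(t,x)|\le T\,\ellzk$, and the hypothesis $T<2/(N^2\ellz)$ together with $\ellzk\to\ellz$ gives the desired $(\nuk)^2|R^k(t,x)|/8\le q<1$ for all sufficiently large $k$.

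Finally, for each such $k$, Theorem~\ref{exuniq} guarantees that $\vk$ is continuous on $\overline{\Omega^k_T}$, so $M_k:=\|\vk\|_{C^0([0,T]\times[0,+\infty))}$ is a priori finite; the bootstrap $M_k\le\|\Ak\|_\infty+q\,M_k$, obtained by taking the supremum in \eqref{duhamv}, then delivers $M_k\le\|\Ak\|_\infty/(1-q)$ uniformly in $k$. The finitely many small-$k$ values for which $2T$ may still exceed $\ellzk$ are absorbed by taking the maximum of their individual Theorem~\ref{exuniq}-bounds. The main obstacle I foresee is the area estimate in the $\Omega_3'$-case, where the right boundary of $R^k(t,x)$ involves $\omk$ and both change with $k$; once this geometric point is settled, the remainder is a clean contraction/bootstrap argument.
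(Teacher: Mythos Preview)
Your proposal is correct and follows essentially the same bootstrap/absorption argument as the paper. The one place where you work harder than necessary is the area estimate: the paper bypasses the case-by-case analysis of $R^k(t,x)$ entirely by observing that $R^k(t,x)\subseteq\Omega^k_T$ whenever $(t,x)\in\overline{\Omega'^k_T}$, which immediately gives $|R^k(t,x)|\le|\Omega^k_T|\le T(\ell_0^k+T)\le 2T\ell_0$ for $k$ large (using $T<\ell_0/2$ and $\ell_0^k\to\ell_0$). This sidesteps the $\Omega_3'$ geometry you flagged as the main obstacle. Otherwise the structure of your argument---uniform $C^0$-bound on $A^k$ from \eqref{convdata}, absorption via $T<2/(N^2\ell_0)$, and handling finitely many small $k$ separately---matches the paper exactly.
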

	\begin{proof}
		We exploit representation formula \eqref{duhamv} and we estimate:
		\begin{align*}
		\Vert \vk\Vert_{C^0([0,T]\times[0,+\infty))}&\leq\Vert \Ak\Vert_{C^0([0,T]\times[0,+\infty))}+\frac{(\nuk)^2}{8}\Vert \Hk\Vert_{C^0([0,T]\times[0,+\infty))}\\
		&\leq\Vert\Ak\Vert_{C^0([0,T]\times[0,+\infty))}+\frac{N^2}{8}|\Omega^k_T|\Vert \vk\Vert_{C^0([0,T]\times[0,+\infty))}\\
		&\leq\Vert\Ak\Vert_{C^0([0,T]\times[0,+\infty))}+\frac{N^2\ellz T}{4}\Vert \vk\Vert_{C^0([0,T]\times[0,+\infty))}.
		\end{align*}
		Since by hypothesis $T\leq \frac{2}{N^2\ellz}$ we deduce that:
		\begin{equation*}
			\Vert \vk\Vert_{C^0([0,T]\times[0,+\infty))}\leq2\Vert \Ak\Vert_{C^0([0,T]\times[0,+\infty))}.
		\end{equation*}
		By the explicit expression of $\Ak$ given by \eqref{A} and using \eqref{convdata} it is easy to get the equiboundedness of $\Ak$ in $C^0([0,T]\times[0,+\infty))$ and so we conclude.
	\end{proof}
Before starting the analysis of the convergence of the sequence $\{\Ak\}_{k\in\enne}$ we state several Lemmas~regarding the convergence of the sequence $\{\omega^k\}_{k\in\enne}$ appearing in formulas \eqref{bordi}, \eqref{A} and \eqref{Hder}.
	\begin{lemma}
		\label{unifinv}
		Let $f^k\colon [a,b]\to\erre$ be a sequence of continuous and invertible functions and assume $f^k$ uniformly converges to a continuous and invertible function $f\colon [a,b]\to\erre$. Then
		$\lim\limits_{k\to +\infty}\max\limits_{y\in D^k_f(a,b)}|(f^k)^{-1}(y)-f^{-1}(y)|= 0$, where $D^k_f(a,b):=f^k([a,b])\cap f([a,b])$.
	\end{lemma}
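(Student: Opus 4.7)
The plan is to argue by contradiction using a standard compactness-plus-continuity scheme, exploiting the fact that $[a,b]$ is compact and $f$ is injective and continuous (hence $f^{-1}$ is continuous on its range).

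First I would rewrite the quantity to be controlled in a convenient form: for $y\in D^k_f(a,b)$, set $x^k:=(f^k)^{-1}(y)\in[a,b]$ and $\tilde x:=f^{-1}(y)\in[a,b]$, so that the thesis reduces to showing that $\sup_{y\in D^k_f(a,b)}|x^k-\tilde x|\to 0$. Assume by contradiction that this fails: then there exist $\eps>0$, a (not relabelled) subsequence, and points $y^k\in D^k_f(a,b)$ such that, writing $x^k=(f^k)^{-1}(y^k)$ and $\tilde x^k=f^{-1}(y^k)$, one has $|x^k-\tilde x^k|\ge\eps$ for every $k$.

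Next I would apply Bolzano--Weierstrass twice on $[a,b]$ to extract a further subsequence along which $x^k\to x_*$ and $\tilde x^k\to\tilde x_*$ in $[a,b]$, where necessarily $|x_*-\tilde x_*|\ge\eps>0$. At this point the crucial step is to pass to the limit in the identity $f^k(x^k)=y^k=f(\tilde x^k)$. For the left-hand side, the uniform convergence $f^k\to f$ together with the continuity of $f$ gives
\begin{equation*}
|f^k(x^k)-f(x_*)|\le\Vert f^k-f\Vert_{C^0([a,b])}+|f(x^k)-f(x_*)|\to 0,
\end{equation*}
while for the right-hand side continuity of $f$ alone yields $f(\tilde x^k)\to f(\tilde x_*)$. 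Therefore $f(x_*)=f(\tilde x_*)$, and the injectivity of $f$ forces $x_*=\tilde x_*$, contradicting $|x_*-\tilde x_*|\ge\eps$.

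There is no real obstacle here: the only point to keep in mind is that the supremum is taken only over $D^k_f(a,b)$, precisely the set where both inverses make sense, so one never needs to worry about $y^k$ lying outside the ranges, and the uniform convergence of $f^k$ to $f$ is used only at points of $[a,b]$, which is where it is assumed. The argument is in fact a purely topological one and does not use the monotonicity of $f^k$ and $f$ beyond what is implicit in their invertibility on the interval.
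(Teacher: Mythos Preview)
Your proof is correct. The contradiction-plus-compactness scheme is sound: the extraction via Bolzano--Weierstrass, the passage to the limit in $f^k(x^k)=f(\tilde x^k)$ using uniform convergence, and the appeal to injectivity of $f$ are all valid as stated.

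The paper takes a different, more direct route. Instead of arguing by contradiction, it writes
\[
|(f^k)^{-1}(y)-f^{-1}(y)|=|f^{-1}(f((f^k)^{-1}(y)))-f^{-1}(y)|
\]
and then observes two things: first, $f^{-1}$ is uniformly continuous on the compact interval $f([a,b])$; second, for any $y\in f^k([a,b])$ one has the explicit bound
\[
|f((f^k)^{-1}(y))-y|=|f((f^k)^{-1}(y))-f^k((f^k)^{-1}(y))|\le\Vert f^k-f\Vert_{C^0([a,b])},
\]
which goes to zero uniformly in $y$. This yields the conclusion without any subsequence argument. The trade-off is the usual one: the paper's argument is constructive and, in effect, gives a rate (the modulus of continuity of $f^{-1}$ evaluated at $\Vert f^k-f\Vert_{C^0}$), while your compactness argument is slightly more robust in spirit and avoids ever invoking uniform continuity of $f^{-1}$ explicitly, relying only on injectivity and pointwise continuity of $f$.
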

	\begin{proof}
		For $y\in D^k_f(a,b)$ it holds:
		\begin{equation}
		\label{invf}
			|(f^k)^{-1}(y)-f^{-1}(y)|=|f^{-1}(f((f^k)^{-1}(y)))-f^{-1}(y)|.
		\end{equation}
		Since $f$ is continuous, $f^{-1}$ is uniformly continuous on the compact interval $f([a,b])$ and so by \eqref{invf} to conclude it is enough to prove that $\max\limits_{y\in f^k([a,b])}|f((f^k)^{-1}(y))-y|\to 0$ as $k\to+\infty$. So let us take $y\in f^k([a,b])$ and reason as follows:
	\begin{equation*}
		|f((f^k)^{-1}(y))-y|=|f((f^k)^{-1}(y))-f^k((f^k)^{-1}(y))|\leq \Vert f^k-f\Vert_{C^0{([a,b])}}.
	\end{equation*}
	Since by hypothesis $f^k$ uniformly converges to $f$ in $[a,b]$ the proof is complete.
	\end{proof}
	As we did in Lemma~\ref{unifinv} we now introduce the following notation: given a time $T>0$ we define $D^k_\psi(0,T):=\psik([0,T])\cap\psi([0,T])$ and $D^k_\varphi(0,T):=\varphik([0,T])\cap\varphi([0,T])$. We notice that we can rewrite them as:
	\begin{equation*}
		D^k_\psi(0,T)=[\ellzk\vee\ellz,\psik(T)\wedge\psi(T)]\quad\mbox{ and }\quad D^k_\varphi(0,T)=[-(\ellz\wedge\ellzk),\varphi(T)\wedge\varphi^k(T)].
	\end{equation*}
	\begin{lemma}
		\label{omega}
		If $\ellk$ uniformly converges to $\ell$ in $[0,T]$, then $\lim\limits_{k\to +\infty}\max\limits_{t\in D^k_\psi(0,T)}|\omk(t)-\omega(t)|=0$. If \eqref{convlnu} holds and $\elldk\to\elld$ in $L^1(0,T)$, then $\displaystyle\lim\limits_{k\to +\infty}\int_{D^k_\psi(0,T)}^{}|\omdk(t)-\omd(t)|\d t= 0$.
	\end{lemma}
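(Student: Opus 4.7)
\emph{Plan.} The idea is to derive both statements from the convergence of $\ellk$ by unwinding $\omk=\varphik\circ(\psik)^{-1}$ and then reducing the integrals on the moving set $D^k_\psi(0,T)$ to integrals on $[0,T]$ via the change of variable $t=\psik(s)$. Two inputs drive the argument: a triangle-inequality comparison of the two compositions defining $\omk$ and $\omega$, and the uniform bilipschitz bound $1\le\dot\psi^k<2$ guaranteed by \eqref{elleb}.

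\textbf{Uniform convergence of $\omk$.} For $t\in D^k_\psi(0,T)$ I would write
\begin{equation*}
|\omk(t)-\omega(t)|\le|\varphik((\psik)^{-1}(t))-\varphi((\psik)^{-1}(t))|+|\varphi((\psik)^{-1}(t))-\varphi(\psi^{-1}(t))|.
\end{equation*}
The first summand is bounded by $\|\varphik-\varphi\|_{C^0([0,T])}=\|\ellk-\ell\|_{C^0([0,T])}$, which vanishes by assumption. Since $\varphi$ is $1$-Lipschitz (cf.\ \eqref{elleb}), the second summand is bounded by $|(\psik)^{-1}(t)-\psi^{-1}(t)|$; and as $\psik\to\psi$ uniformly on $[0,T]$ (again $\psik-\psi=\ellk-\ell$) with the $\psik$, $\psi$ continuous and invertible, Lemma~\ref{unifinv} applied to the sequence $\psik$ delivers the uniform vanishing on $D^k_\psi(0,T)$.

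\textbf{$L^1$ convergence of $\omdk$.} Setting $g(x):=(1{-}x)/(1{+}x)$, which satisfies $|g'|\le 2$ on $[0,+\infty)$, formula \eqref{omegadot} and the triangle inequality give
\begin{equation*}
|\omdk(t)-\omd(t)|\le 2|\elldk((\psik)^{-1}(t))-\elld((\psik)^{-1}(t))|+2|\elld((\psik)^{-1}(t))-\elld(\psi^{-1}(t))|.
\end{equation*}
I would change variable $t=\psik(s)$ in the integral over $D^k_\psi(0,T)$ and use $1\le\dot\psi^k\le 2$: the contribution of the first summand is controlled by $4\int_0^T|\elldk-\elld|\d s$, which tends to zero by hypothesis; the contribution of the second summand is controlled by $4\int_0^T|\elld(s)-\elld(\psi^{-1}(\psik(s)))|\d s$.

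\textbf{Main obstacle.} The delicate piece is this last integral: because $\elld$ is merely $L^\infty(0,T)$, composition with the moving inner function $\psi^{-1}\circ\psik$ is not automatically continuous in $L^1$. I would handle it by a standard density argument: fix $\eps>0$, choose $\eta\in C^0([0,T])$ with $\|\elld-\eta\|_{L^1(0,T)}<\eps$, and split the integrand by the triangle inequality into $|\elld-\eta|(s)$, $|\eta(s)-\eta(\psi^{-1}(\psik(s)))|$ and $|\eta-\elld|(\psi^{-1}(\psik(s)))$. The first contributes at most $\eps$; the third, after the additional change of variable $r=\psi^{-1}(\psik(s))$ (bilipschitz with constants $\tfrac12$ and $2$ independent of $k$), contributes at most $2\eps$; the middle one vanishes as $k\to+\infty$ by uniform continuity of $\eta$ combined with the uniform convergence $\psi^{-1}\circ\psik\to\mathrm{id}$ on $[0,T]$, which follows at once from the $1$-Lipschitz regularity of $\psi^{-1}$ together with $\psik\to\psi$ uniformly.
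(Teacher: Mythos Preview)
Your proof is correct and follows essentially the same route as the paper: the uniform part is handled identically via Lemma~\ref{unifinv} and the $1$-Lipschitz bound on $\varphi$, and the $L^1$ part uses the same Lipschitz bound on $x\mapsto(1{-}x)/(1{+}x)$ followed by the same density argument with a continuous approximation of $\elld$. The only cosmetic difference is that you push the change of variable $t=\psik(s)$ through before the density step (working with $\psi^{-1}\circ\psik\to\mathrm{id}$ on $[0,T]$), whereas the paper keeps the integral on $D^k_\psi(0,T)$ and compares $(\psik)^{-1}$ with $\psi^{-1}$ directly; just be careful that after your substitution the domain of integration is the subinterval $(\psik)^{-1}(D^k_\psi(0,T))\subseteq[0,T]$ rather than all of $[0,T]$, so that $\psi^{-1}(\psik(s))$ is actually defined.
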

	\begin{proof}
		Assume that $\ellk\to\ell$ uniformly in $[0,T]$, then obviously $\psik\to\psi$ uniformly in $[0,T]$ and so by Lemma~\ref{unifinv} we get $\lim\limits_{k\to +\infty}\max\limits_{t\in D^k_\psi(0,T)}|(\psik)^{-1}(t)-\psi^{-1}(t)|=0$. Take now $t\in D^k_\psi(0,T)$, then
		\begin{align*}
		|\omk(t)-\omega(t)|&\leq|\varphi^k((\psik)^{-1}(t))-\varphi((\psik)^{-1}(t))|+|\varphi((\psik)^{-1}(t))-\varphi(\psi^{-1}(t))|\\
		&\leq \Vert \ellk-\ell\Vert_{C^0([0,T])}+|(\psik)^{-1}(t)-\psi^{-1}(t)|,
		\end{align*}
		and hence we deduce $\lim\limits_{k\to +\infty}\max\limits_{t\in D^k_\psi(0,T)}|\omk(t)-\omega(t)|=0$.\par 
		Now assume that $\elldk\to\elld$ in $L^1(0,T)$. Notice that by \eqref{convlnu} this implies $\ellk\to\ell$ uniformly in $[0,T]$, and so we have:
		\begingroup
		\allowdisplaybreaks
		\begin{align*}
			\int_{D^k_\psi(0,T)}\!\!\!\!\!\!\!\!\!\!\!\!\!\!|\omdk(t)-\omd(t)|\d t &=\int_{D^k_\psi(0,T)}\left|\frac{1-\elldk((\psik)^{-1}(t))}{1+\elldk((\psik)^{-1}(t))}-\frac{1-\elld(\psi^{-1}(t))}{1+\elld(\psi^{-1}(t))}\right|\d t\\
			&\leq 2\int_{D^k_\psi(0,T)}\left|\elldk((\psik)^{-1}(t))-\elld(\psi^{-1}(t))\right|\d t\\
			&\leq 2\left(\!\int_{D^k_\psi(0,T)}\!\!\!\!\!\!\!\!\!\!\!\!\!\!|\elldk((\psik)^{-1}(t))-\elld((\psik)^{-1}(t))|\d t+\int_{D^k_\psi(0,T)}\!\!\!\!\!\!\!\!\!\!\!\!\!\!|\elld((\psik)^{-1}(t))-\elld(\psi^{-1}(t))|\d t\!\right)\\
			&\leq2\left(2\int_{0}^{T}\left|\elldk(s)-\elld(s)\right|\d s+\int_{D^k_\psi(0,T)}\left|\elld((\psik)^{-1}(t))-\elld(\psi^{-1}(t))\right|\d t\right).
		\end{align*}
		\endgroup
		By assumption the first term in the last line goes to zero as $k\to +\infty$, while for the second term we reason as follows. We fix $\eps>0$ and we consider $f_\eps\in C^0([0,T])$ such that $\Vert\elld-f_\eps\Vert_{L^1(0,T)}\leq \eps$, so we can estimate:
		\begingroup
		\allowdisplaybreaks
		\begin{align*}
			&\quad\,\,\int_{D^k_\psi(0,T)}\left|\elld((\psik)^{-1}(t))-\elld(\psi^{-1}(t))\right|\d t\\ &\leq\int_{D^k_\psi(0,T)}\left|\elld((\psik)^{-1}(t))-f_\eps((\psik)^{-1}(t))\right|\d t+\int_{D^k_\psi(0,T)}\left|f_\eps((\psik)^{-1}(t))-f_\eps(\psi^{-1}(t))\right|\d t\\
			&\quad+\int_{D^k_\psi(0,T)}\left|f_\eps(\psi^{-1}(t))-\elld(\psi^{-1}(t))\right|\d t\\
			&\leq 2\Vert\elld-f_\eps\Vert_{L^1(0,T)}+\int_{D^k_\psi(0,T)}\left|f_\eps((\psik)^{-1}(t))-f_\eps(\psi^{-1}(t))\right|\d t+2\Vert\elld-f_\eps\Vert_{L^1(0,T)}\\
			&\leq 4\eps+\int_{D^k_\psi(0,T)}\left|f_\eps((\psik)^{-1}(t))-f_\eps(\psi^{-1}(t))\right|\d t.
		\end{align*}
		\endgroup
		By dominated convergence the last integral goes to zero as $k\to+\infty$ and so by the arbitrariness of $\eps$ we get the result.
	\end{proof}

	\begin{lemma}
		\label{composition}
		Let $f^k$ be a sequence of $L^2(\erre)$-functions converging to $f$ strongly in $L^2(\erre)$. If \eqref{convlnu} holds and $\elldk\to\elld$ in $L^1(0,T)$, then 
		\begin{equation*}
			\lim\limits_{k\to +\infty}\int_{D^k_\psi(0,T)}^{}|f^k(-\omk(s))\omdk(s)-f(-\omega(s))\omd(s)|^2\d s=0.
		\end{equation*}
	\end{lemma}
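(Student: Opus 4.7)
The plan is to split the integrand by inserting intermediate terms which isolate the $L^2$--convergence $f^k\to f$ and the convergences $\omk\to\omega$ and $\omdk\to\omd$ of Lemma~\ref{omega}. Specifically, I would write
\begin{equation*}
f^k({-}\omk(s))\omdk(s)-f({-}\omega(s))\omd(s)=[f^k{-}f]({-}\omk(s))\omdk(s)+\bigl[f({-}\omk(s))\omdk(s)-f({-}\omega(s))\omd(s)\bigr],
\end{equation*}
and bound the squared $L^2(D^k_\psi(0,T))$-norm of the left-hand side by twice the sum of those of the two addends on the right; it then suffices to show that each of them vanishes as $k\to+\infty$.

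For the first addend, since $0<\omdk(s)\le 1$ for a.e.\ $s$, I bound $\omdk(s)^2\le \omdk(s)$ and apply the change of variable $y={-}\omk(s)$, which is an absolutely continuous strictly decreasing bijection of $D^k_\psi(0,T)$ onto an interval of $\erre$ with Jacobian $\omdk$; this yields
\begin{equation*}
\int_{D^k_\psi(0,T)}|(f^k{-}f)({-}\omk(s))|^2\omdk(s)^2\d s\le \int_{\erre}|f^k(y)-f(y)|^2\d y=\Vert f^k-f\Vert_{L^2(\erre)}^2,
\end{equation*}
which tends to zero by hypothesis.

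The second addend is the delicate part, and constitutes the main obstacle: since $f$ is merely in $L^2(\erre)$, the composition $s\mapsto f({-}\omk(s))$ enjoys no pointwise continuity in $s$, so a direct passage to the limit fails. To bypass this difficulty I employ a density argument: fixed $\eps>0$, I choose $f_\eps\in C^0_c(\erre)$ with $\Vert f-f_\eps\Vert_{L^2(\erre)}\le\eps$, and I insert and subtract $f_\eps$ to split the second addend into
\begin{equation*}
[f{-}f_\eps]({-}\omk)\omdk,\quad f_\eps({-}\omk)\omdk-f_\eps({-}\omega)\omd,\quad [f_\eps{-}f]({-}\omega)\omd.
\end{equation*}
The first and third pieces have squared $L^2(D^k_\psi(0,T))$-norm bounded by $\Vert f-f_\eps\Vert_{L^2(\erre)}^2\le\eps^2$ via the same change-of-variables argument used above (applied to $\omk$ and $\omega$ respectively). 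For the middle piece I exploit the regularity of $f_\eps$: by Lemma~\ref{omega} we have $\omk\to\omega$ uniformly on $D^k_\psi(0,T)$, so uniform continuity of $f_\eps$ gives $f_\eps({-}\omk)\to f_\eps({-}\omega)$ uniformly on $D^k_\psi(0,T)$; moreover Lemma~\ref{omega} yields $\omdk\to\omd$ in $L^1(D^k_\psi(0,T))$, and since $|\omdk-\omd|\le 2$ this convergence actually holds in $L^2$ as well. Combining these facts with $\Vert\omdk\Vert_{L^\infty}\le 1$ and $\Vert f_\eps\Vert_{L^\infty}<+\infty$, a standard triangle-inequality estimate shows that the middle piece tends to zero in $L^2(D^k_\psi(0,T))$. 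Letting first $k\to+\infty$ and then $\eps\to 0$ concludes the proof.
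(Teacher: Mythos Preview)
Your proof is correct and follows essentially the same approach as the paper: the same two-term split, the same change-of-variables bound for the first addend, and the same density argument with $f_\eps\in C^0_c(\erre)$ for the second. The only minor difference is in the treatment of the continuous middle piece $f_\eps({-}\omk)\omdk-f_\eps({-}\omega)\omd$: the paper passes through a subsequence and dominated convergence, whereas you argue directly via the uniform convergence $f_\eps({-}\omk)\to f_\eps({-}\omega)$ and the $L^2$-convergence $\omdk\to\omd$ (obtained from the $L^1$-convergence of Lemma~\ref{omega} together with the uniform bound $|\omdk-\omd|\le 2$); both routes are valid and yield the same conclusion.
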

	\begin{proof}
		It is enough to estimate:
		\begingroup
		\allowdisplaybreaks
		\begin{align*}
			&\quad\,\int_{D^k_\psi(0,T)}^{}|f^k(-\omk(s))\omdk(s)-f(-\omega(s))\omd(s)|^2\d s\\
			&\leq 2\int_{D^k_\psi(0,T)}^{}\!\!\!\!\!\!\!\!\!\!\!\!\!|f^k(-\omk(s))\omdk(s)-f(-\omk(s))\omdk(s)|^2\d s+2\int_{D^k_\psi(0,T)}^{}\!\!\!\!\!\!\!\!\!\!\!\!\!|f(-\omk(s))\omdk(s)-f(-\omega(s))\omd(s)|^2\d s\\
			&\leq 2\Vert f^k-f\Vert^2_{L^2(\erre)}+2\int_{D^k_\psi(0,T)}^{}|f(-\omk(s))\omdk(s)-f(-\omega(s))\omd(s)|^2\d s.
		\end{align*}
		\endgroup
		Here we used the uniform bound of $\omdk$, see \eqref{omegadot}. By assumption the first term in the last line vanishes as $k\to+\infty$, while for the second integral we reason as in the proof of Lemma~\ref{omega}: for $\eps>0$ fixed let us consider $f_\eps\in C^0_c(\erre)$ satisfying $\Vert f-f_\eps\Vert^2_{L^2(\erre)}\leq\eps$, then we have:
		\begingroup
		\allowdisplaybreaks
		\begin{align*}
			&\quad\,\int_{D^k_\psi(0,T)}^{}|f(-\omk(s))\omdk(s)-f(-\omega(s))\omd(s)|^2\d s\\
			&\leq 3\int_{D^k_\psi(0,T)}^{}\!\!\!\!\!\!\!\!\!\!\!\!\!\!\!\!|f(-\omk(s))\omdk(s)-f_\eps({-}\omk(s))\omdk(s)|^2\d s+3\int_{D^k_\psi(0,T)}^{}\!\!\!\!\!\!\!\!\!\!\!\!\!\!\!\!|f_\eps({-}\omk(s))\omdk(s)-f_\eps(-\omega(s))\omd(s)|^2\d s\\
			&\quad+3\int_{D^k_\psi(0,T)}^{}\!\!\!\!\!\!\!\!\!\!\!\!\!\!\!\!|f_\eps({-}\omega(s))\omd(s)-f(-\omega(s))\omd(s)|^2\d s\\
			&\leq 3\int_{\erre}^{}\!\!|f(x)-f_\eps(x)|^2\d x+3\int_{D^k_\psi(0,T)}^{}\!\!\!\!\!\!\!\!\!\!\!\!\!\!|f_\eps(-\omk(s))\omdk(s)-f_\eps(-\omega(s))\omd(s)|^2\d s+3\int_{\erre}^{}\!\!|f(x)-f_\eps(x)|^2\d x\\
			&\leq 6\eps+3\int_{D^k_\psi(0,T)}^{}|f_\eps(-\omk(s))\omdk(s)-f_\eps(-\omega(s))\omd(s)|^2\d s.
		\end{align*}
		\endgroup
		By dominated convergence the last integral goes to zero as $k\to+\infty$. Indeed exploiting Lemma~\ref{omega} we deduce that, up to subsequences (not relabelled), the function $|\omdk-\omd|\chi_{D^k_\psi(0,T)}$ (here and henceforth $\chi$ denotes the characteristic function of a set) vanishes almost everywhere on a bounded interval (the intervals $D^k_\psi(0,T)$ are all contained for instance in $[0,\psi(T)+1]$). By continuity of $f_\eps$ and since by assumptions $\chi_{D^k_\psi(0,T)}\to\chi_{[\ellz,\psi(T)]}$ almost everywhere as $k\to +\infty$, this implies that also $|f_\eps(-\omk)\omdk-f_\eps(-\omega)\omd|^2 \chi_{D^k_\psi(0,T)}$ vanishes almost everywhere on that bounded interval. Since the limit does not depend on the subsequence we conclude.\par 
		Thus by the arbitrariness of $\eps$ we get the result.
	\end{proof}
	Now that we have established some convergence results of the sequence $\{\omega^k\}_{k\in\enne}$ we can start to study how the sequence $\{\Ak\}_{k\in\enne}$ behaves under different convergence assumptions on $\{\ellk\}_{k\in\enne}$.
	\begin{prop}
		\label{uniformA}
	Assume \eqref{convdata} and let $T<\frac{\ellz}{2}$. If $\ellk$ uniformly converges to $\ell$ in $[0,T]$, then $\Ak$ uniformly converges to $ A$ in $[0,T]\times[0,+\infty)$.
	\end{prop}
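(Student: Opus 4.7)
The plan is to prove uniform convergence by splitting $[0,T]\times[0,+\infty)$ into a ``safe'' region, where for $k$ large every $(t,x)$ lies in the same subdomain $\Omega'_i$ for both $\ell$ and $\ellk$, and a thin ``transition'' region near the separating lines, which I would handle by an equicontinuity argument.

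First, since $\ellzk\to\ellz$ and $\ellk\to\ell$ uniformly on $[0,T]$, the sets $\Omega^k_T$ are all contained in a fixed bounded rectangle $K=[0,T]\times[0,L]$, outside which both $\Ak$ and $A$ are identically zero (via the extension of Remark~\ref{zeroext}). The partitions $(\Omega^k_1)'\cup(\Omega^k_2)'\cup(\Omega^k_3)'$ converge in the obvious sense to $\Omega'_1\cup\Omega'_2\cup\Omega'_3$, since the only $k$-dependent separating lines are $t+x=\ellzk$ and the curves $x=\ellk(t)$. For every $\delta>0$ I introduce the open ``safe'' set $K_\delta\subset K$ of points at distance at least $\delta$ from the lines $t=x$, $t+x=\ellz$ and from the curve $x=\ell(t)$; by uniform convergence, for $k\geq k_0(\delta)$ every $(t,x)\in K_\delta$ lies in the same $\Omega'_i$ for both indices.

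On $K_\delta$ I would bound $|\Ak(t,x)-A(t,x)|$ directly from \eqref{A}. The cases of $\Omega'_1$ and $\Omega'_2$ reduce to controlling the differences $v_0^k-v_0$, $z^k-z$ in $C^0$ (via the embedding $H^1\hookrightarrow C^0$ applied to \eqref{convdata}) together with integral remainders of the form $\int_a^b(v_1^k-v_1)\d s$, which by Cauchy--Schwarz are at most $\sqrt{2T}\,\|v_1^k-v_1\|_{L^2}\to 0$, uniformly in $(t,x)$. The case $\Omega'_3$ additionally involves comparing $v_0^k(-\omk(x+t))$ with $v_0(-\omega(x+t))$ and the integrals $\int_{x-t}^{-\omk(x+t)}v_1^k$: these are handled by combining the uniform convergence $v_0^k\to v_0$ with the uniform convergence $\omk\to\omega$ on $D^k_\psi(0,T)$ supplied by Lemma~\ref{omega}. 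Altogether this yields an estimate $\sup_{K_\delta}|\Ak-A|\leq\eps^k$ independent of $\delta$.

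On the transition layer $K\setminus K_\delta$ I would use uniform continuity of $A$ on the compact $K$ (which follows from the continuity of $A$ on $\overline{\Omega}$ plus the fact that $A=0$ on $x=\ell(t)$ thanks to $v_0(\ell_0)=0$ and the identity $-\omega(\psi(t))=\ell(t)-t$) together with equicontinuity of $\{\Ak\}$ uniform in $k$. The latter is read off \eqref{A}: $\{v_0^k\}$ and $\{z^k\}$ are equibounded in $H^1$ and hence uniformly H\"older-$\tfrac12$, the primitives of $v_1^k$ are uniformly absolutely continuous because $\{(v_1^k)^2\}$ is equi-integrable, and the $\omk$ are uniformly Lipschitz with $0<\omdk\leq 1$. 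A standard $\eps/3$ argument comparing $(t,x)\in K\setminus K_\delta$ to a nearby $(t^*,x^*)\in K_\delta$ closes the estimate. The main obstacle is the careful bookkeeping in the $\Omega'_3$ case, where the formula for $A$ depends on $\ell$ through $\omega$, and one must simultaneously handle the mismatch between the domains $\Omega^k$ and $\Omega$ near $x=\ell(t)$ by exploiting that $A$ vanishes on this curve so the zero-extension remains continuous.
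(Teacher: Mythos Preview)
Your approach is correct, but it differs from the paper's. The paper does not use an equicontinuity/$\varepsilon$--$3$ argument; instead it introduces an explicit eight-piece partition $\Lambda^k_1,\dots,\Lambda^k_8$ of $[0,T]\times[0,+\infty)$ adapted simultaneously to $\ell$ and $\ellk$ (your $K_\delta$ corresponds roughly to $\Lambda^k_1\cup\Lambda^k_2\cup\Lambda^k_5$, while your transition layer is split into the ``mismatch'' pieces $\Lambda^k_3,\Lambda^k_4,\Lambda^k_6,\Lambda^k_7$ where $(t,x)$ lies in different $\Omega'_i$ for the two fronts). On each $\Lambda^k_i$ the paper writes down $|A^k-A|$ directly from \eqref{A} and bounds it by quantities like $\|\vzk-\vz\|_{C^0}$, $\|\vuk-\vu\|_{L^2}$, $\int_{-\omega(\ellzk)}^{\ellz}(|\vzd|+|\vu|)$, and $\max_{D^k_\psi}|\omk-\omega|$, all of which vanish as $k\to\infty$.

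What each approach buys: the paper's explicit partition gives concrete, constructive bounds and---more importantly---the same sets $\Lambda^k_i$ are reused verbatim in the proofs of the $H^1$ convergence (Proposition~\ref{H1A}) and of Lemma~\ref{diffconv}, so the case analysis is amortised over several results. Your route is conceptually cleaner for this single proposition, since it replaces the four mismatch cases by a single equicontinuity estimate (uniform H\"older-$\tfrac12$ bounds on $v_0^k,z^k$ and on the primitives of $v_1^k$, plus the uniform $1$-Lipschitz bound on $\omk$). One small point worth tightening in your write-up: the cleanest way to close the argument is to invoke Arzel\`a--Ascoli (equiboundedness plus equicontinuity give precompactness in $C^0$, and pointwise convergence on the dense complement of the separating curves identifies the limit), rather than the $\varepsilon/3$ comparison, which otherwise requires a mild geometric remark about finding a nearby point in $K_\delta$ near the intersections of the separating curves.
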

	\begin{proof}
		We assume without loss of generality that $\ellz<\ellzk$, the other cases being analogous. As in the whole paper we exploit explicit formula \eqref{A}, so we need to deal with some different cases separately. We thus consider the following partition of $[0,T]\times[0,+\infty)$, see Figure~\ref{FigLambdas}:
			\begin{alignat}{4}\label{partition}
			&\Lambda^k_1:=(\Omega'_1)_T,\quad\quad&&\Lambda^k_2:=(\Omega'_2)_T,\quad\quad &\Lambda^k_3:=(\Omega'^k_1)_T\cap(\Omega'_3)_T,\nonumber\\
			&\Lambda^k_4:=(\Omega'^k_1)_T\diff\Omega_T,\quad\quad&&\Lambda^k_5:=(\Omega'^k_3)_T\cap(\Omega'_3)_T,\quad\quad&\Lambda^k_6:=(\Omega'^k_3)_T\diff\Omega_T,\\
			&\Lambda^k_7:=(\Omega'_3)_T\diff\Omega^k_T,\quad\quad &&\Lambda^k_8:=\big([0,T]\times[0,+\infty)\big)\setminus\bigcup_{i=1}^{7}\Lambda^k_i.\nonumber
			\end{alignat}
		If $(t,x)\in \Lambda^k_1$, then
		\begin{align*}
		|\Ak(t,x)-A(t,x)|\leq\Vert\vzk-\vz\Vert_{C^0([0,+\infty))}+\frac{\sqrt{\ellz}}{2}\Vert\vuk-\vu\Vert_{L^2(0,+\infty)}.
		\end{align*}
		If $(t,x)\in \Lambda^k_2$, then 
		\begin{align*}
		|\Ak(t,x)-A(t,x)|\leq \Vert\zk-z\Vert_{C^0([0,T])}+ \Vert\vzk-\vz\Vert_{C^0([0,+\infty))}+\frac{\sqrt{\ellz}}{2}\Vert\vuk-\vu\Vert_{L^2(0,+\infty)}.
		\end{align*}
		If $(t,x)\in \Lambda^k_3$, we first notice that $\vz(x{+}t)=0$ and that $-\omega(\ellzk)\leq-\omega(x{+}t)\leq \ellz\leq x{+}t\leq\ellzk$, then we estimate:
		\begingroup
		\allowdisplaybreaks
		\begin{align*}
			&\quad\,|\Ak(t,x)-A(t,x)|\\
			&\leq\frac 12 |\vzk(x{-}t)-\vz(x{-}t)|+\frac 12|\vzk(x{+}t)+\vz(-\omega(x{+}t))|+\frac 12\left|\int_{x{-}t}^{x{+}t}\vuk(s)\d s-\int_{x{-}t}^{-\omega(x{+}t)}\!\!\!\!\!\!\!\vu (s)\d s\right|\\
			&\leq\Vert\vzk-\vz\Vert_{C^0([0,+\infty))}+\frac{\sqrt{\ellz}+\sqrt{\ellzk}}{2}\Vert\vuk-\vu\Vert_{L^2(0,+\infty)}+\frac 12|\vz(-\omega(x{+}t))|+\frac 12 \left|\int_{-\omega(x{+}t)}^{x{+}t}\!\!\!\!\!\!\!\vu(s)\d s\right|\\
			&\leq \Vert\vzk-\vz\Vert_{C^0([0,+\infty))}+C\Vert\vuk-\vu\Vert_{L^2(0,+\infty)}+\int_{-\omega(\ellzk)}^{\ellz}(|\vzd(s)|+|\vu(s)|)\d s.
		\end{align*}
		\endgroup
		If $(t,x)\in \Lambda^k_4$, we notice that $-\omega(\ellzk)\le x{-}t\le x{+}t\le \ellzk$ and hence we get:
		\begin{align*}
		|\Ak(t,x)-A(t,x)|&=|\Ak(t,x)|\leq\int_{-\omega(\ellzk)}^{\ellzk}|\vzdk(s)|\d s+\frac 12\int_{-\omega(\ellzk)}^{\ellzk}|\vuk(s)|\d s\\
		&\leq C\Vert\vzdk-\vzd\Vert_{L^2(0,+\infty)}+C\Vert\vuk-\vu\Vert_{L^2(0,+\infty)}+\int_{-\omega(\ellzk)}^{\ellz}\!\!\!\!\!\!\!\left(|\vzd(s)|+|\vu(s)|\right)\d s.
		\end{align*}
		If $(t,x)\in \Lambda^k_5$, then
		\begingroup
		\allowdisplaybreaks
		\begin{align*}
			&\quad\,|\Ak(t,x)-A(t,x)|\\
			&\leq\frac 12 \Vert\vzk-\vz\Vert_{C^0([0,+\infty))}+\frac 12 |\vzk(-\omk(x{+}t))-\vz(-\omega(x{+}t))|+\frac 12 \left|\int_{x{-}t}^{-\omk(x{+}t)}\!\!\!\!\!\!\!\!\!\!\!\!\!\!\!\!\!\!\!\!\vuk(s)\d s-\int_{x{-}t}^{-\omega(x{+}t)}\!\!\!\!\!\!\!\!\!\!\!\!\!\!\!\!\!\!\!\!\vu(s)\d s\right|\\
			&\leq \Vert\vzk-\vz\Vert_{C^0([0,+\infty))}+\frac 12 |\vz(-\omk(x{+}t))-\vz(-\omega(x{+}t))|+C\Vert\vuk-\vu\Vert_{L^2(0,+\infty)}\\
			&\quad+\frac 12 \left|\int_{-\omega(x{+}t)}^{-\omk(x{+}t)}\!\!\!\!|\vu(s)|\d s\right|\\
			&\leq \Vert\vzk-\vz\Vert_{C^0([0,+\infty))}+\max\limits_{r\in D^k_\psi(0,T)}|\vz(-\omk(r))-\vz(-\omega(r))|+C\Vert\vuk-\vu\Vert_{L^2(0,+\infty)}\\
			&\quad+\max\limits_{r\in D^k_\psi(0,T)}\left|\int_{-\omega(r)}^{-\omk(r)}|\vu(s)|\d s\right|.
		\end{align*}
		\endgroup
		If $(t,x)\in \Lambda^k_6$, we notice that $-\omega(x{+}t)\le x{-}t\le -\omk(x{+}t)$ and hence we get:
		\begingroup
		\allowdisplaybreaks
		\begin{align*}
			|\Ak(t,x)-A(t,x)|&=|\Ak(t,x)|\leq\frac 12 |\vzk(x{-}t)-\vzk(-\omk(x{+}t))|+\frac 12 \int_{x{-}t}^{-\omk(x{+}t)}|\vuk(s)|\d s\\
			&\leq\frac 12\int_{x{-}t}^{-\omk(x{+}t)}(|\vzdk(s)|+|\vuk(s)|)\d s\\
			&\leq C\Vert\vzdk-\vzd\Vert_{L^2(0,+\infty)}+C\Vert\vuk-\vu\Vert_{L^2(0,+\infty)}+\int_{x{-}t}^{-\omk(x{+}t)}\!\!\!\!\!\!\!\left(|\vzd(s)|+|\vu(s)|\right)\d s\\
			&\leq C\Vert\vzdk-\vzd\Vert_{L^2(0,+\infty)}\!+\!C\Vert\vuk-\vu\Vert_{L^2(0,+\infty)}\!\!+\!\!\!\!\max\limits_{r\in D^k_\psi(0,T)}\int_{-\omega(r)}^{-\omk(r)}\!\!\!\!\!\!\!\!\!\!\!\!\!\!(|\vzd(s)|+|\vu(s)|)\d s.
		\end{align*}
		\endgroup
		If $(t,x)\in \Lambda^k_7$ one reasons just as above, while if $(t,x)\in \Lambda^k_8$ there is nothing to prove since $A^k(t,x)=A(t,x)=0$.\par
		We conclude exploiting Lemma~\ref{omega} and using \eqref{convdata}.			
	\end{proof}

	\begin{prop}
		\label{H1A}
		Assume \eqref{convlnu}, \eqref{convdata} and let $T<\frac{\ellz}{2}$. If $\elldk\to\elld$ in $L^1(0,T)$, then $\Ak\to A$ in $H^1((0,T)\times(0,+\infty))$.
	\end{prop}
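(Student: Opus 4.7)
The plan is to upgrade Proposition~\ref{uniformA} from uniform convergence of $\Ak$ to $L^2$-convergence of the gradient $\nabla\Ak$. The hypothesis $\elldk\to\elld$ in $L^1(0,T)$ implies, via \eqref{convlnu}, the uniform convergence of $\ellk$ to $\ell$ on $[0,T]$, so Proposition~\ref{uniformA} applies; and since the supports of $\Ak$ and $A$ are both contained in the bounded rectangle $[0,T]\times[0,T+(\ellz\vee\ellzk)]$, the $L^2$-convergence of $\Ak-A$ itself follows immediately from the uniform one. The real task is therefore to prove $A^k_t\to A_t$ and $A^k_x\to A_x$ in $L^2((0,T)\times(0,+\infty))$. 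I would differentiate the explicit formula \eqref{A} in each of the three regions $\Omega'_i$, obtaining expressions that are, up to linear combinations, either of the form $\vzdk(x{\pm}t)$, $\vuk(x{\pm}t)$, $\zdk(t{-}x)$ (on $\Omega'^k_1$ and $\Omega'^k_2$) or of that form together with $\vzdk(-\omk(x{+}t))\omdk(x{+}t)$ and $\vuk(-\omk(x{+}t))\omdk(x{+}t)$ (on $\Omega'^k_3$). I would then reuse the partition \eqref{partition} from the proof of Proposition~\ref{uniformA} and treat each piece separately.

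On $\Lambda^k_1$ and $\Lambda^k_2$ neither $\omk$ nor $\omega$ appears; Fubini combined with the affine changes of variable $y=x\pm t$ (bounded Jacobian) reduces the $L^2$-norm over these sets to the 1d-norms of $\vzdk-\vzd$, $\vuk-\vu$ and $\zdk-\dot z$, which vanish by \eqref{convdata}. On $\Lambda^k_8$ both $\Ak$ and $A$ are identically zero. The four remaining pieces $\Lambda^k_3, \Lambda^k_4, \Lambda^k_6, \Lambda^k_7$ have Lebesgue measure going to zero as $k\to +\infty$: $\Lambda^k_3\cup\Lambda^k_4$ sits in the strip $\{(\ellz\wedge\ellzk)\le t+x\le (\ellz\vee\ellzk)\}$, while $\Lambda^k_6$ and $\Lambda^k_7$ are trapped between the uniformly-close graphs of $\ell$ and $\ellk$. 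On each of these sets the integrands are of one of the forms recalled above; they converge in $L^2$ to a limit which vanishes on the shrinking region (because $\vzd$ and $\vu$ are identically zero outside $[0,\ellz]$), so the standard equi-integrability estimate
\begin{equation*}
	\int_{E^k}|f^k|^2\d y\le 2\Vert f^k-f\Vert_{L^2}^2+2\int_{E^k}|f|^2\d y\longrightarrow 0
\end{equation*}
kills their contribution.

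The only genuine difficulty is $\Lambda^k_5=(\Omega'^k_3)_T\cap(\Omega'_3)_T$, where the derivatives of $\Ak$ and $A$ both contain compositions with $\omk$ and $\omega$. Introducing the shorthand $F^k(s):=\vzdk(-\omk(s))\omdk(s)$ and $F(s):=\vzd(-\omega(s))\omd(s)$, these critical terms depend on $(t,x)$ only through $s=t+x$, so Fubini with the slicing $s=t+x$ (Jacobian equal to $1$) yields
\begin{equation*}
	\int_{\Lambda^k_5}|F^k(x{+}t)-F(x{+}t)|^2\d x\d t\le T\int_{D^k_\psi(0,T)}|F^k(s)-F(s)|^2\d s,
\end{equation*}
and the right-hand side vanishes by Lemma~\ref{composition} applied with $f^k=\vzdk$, $f=\vzd$ (and analogously with $f^k=\vuk$, $f=\vu$). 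This is precisely the step where the strengthened hypothesis $\elldk\to\elld$ in $L^1(0,T)$, rather than mere uniform convergence of $\ellk$, is essential, and it constitutes the main obstacle of the proof: since $\vzd$ and $\vu$ live only in $L^2$ no pointwise argument is available, and reducing the 2d integral to the 1d setting of Lemma~\ref{composition} via Fubini is the only structural tool at hand.
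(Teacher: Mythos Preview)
Your proposal is correct and follows essentially the same route as the paper: the same partition \eqref{partition}, the same direct treatment of $\Lambda^k_1,\Lambda^k_2$ via \eqref{convdata}, and the same crucial reduction on $\Lambda^k_5$ to Lemma~\ref{composition} after slicing by $s=t+x$. The only variation is on the shrinking pieces $\Lambda^k_3,\Lambda^k_4,\Lambda^k_6,\Lambda^k_7$, where you invoke a generic equi-integrability bound while the paper performs explicit change-of-variable estimates; note that your parenthetical justification (``the limit vanishes because $\vzd,\vu\equiv 0$ outside $[0,\ellz]$'') is not quite accurate for the $(x{-}t)$-terms on $\Lambda^k_6,\Lambda^k_7$, but your displayed inequality still gives the conclusion since $|f|^2\in L^1$ and $|E^k|\to 0$.
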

	\begin{proof}
		\begin{figure}
			\centering
			\includegraphics[scale=.8]{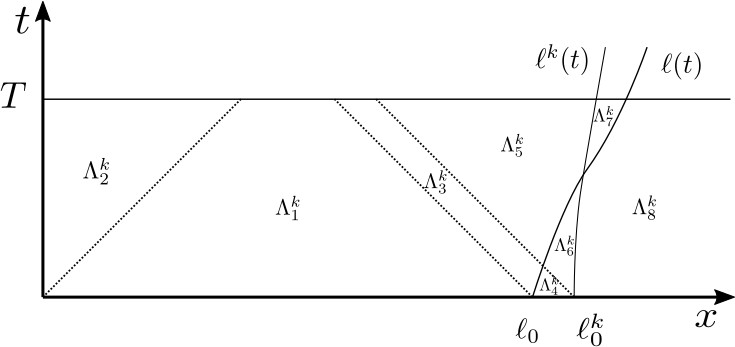}
			\caption{The partition of the set $[0,T]\times[0,+\infty)$ via the sets $\Lambda^k_i$, for $i=1,\dots,8$, in the case $\ellz<\ellzk$.}\label{FigLambdas}
		\end{figure}
		First of all we notice that our hypothesis imply $\ellk$ uniformly converges to $\ell$ in $[0,T]$ and hence by Proposition~\ref{uniformA} we deduce that $\Ak\to A$ in $L^2((0,T)\times(0,+\infty))$, so we only have to prove that the same kind of convergence holds true for $\Akt$ and $\Akx$. We assume without loss of generality that $\ellz<\ellzk$, the other cases being analogous. We then consider again the partition \eqref{partition} used in the proof of previous Proposition, see also Figure \ref{FigLambdas}. So we have:
		\begin{equation*}
			\Vert \Akt-A_t\Vert^2_{L^2((0,T)\times(0,+\infty))}=\sum_{i=1}^{7}\iint_{\Lambda_i^k}|\Akt(t,x)-A_t(t,x)|^2\d x\d t.
		\end{equation*}
		By \eqref{convdata} the integrals over $\Lambda^k_1$ and $\Lambda^k_2$ goes to zero as $k\to+\infty$. For the others we start to estimate from $\Lambda_3^k$:
		\begingroup
		\allowdisplaybreaks
		\begin{align*}
			&\quad\,\iint_{\Lambda_3^k}|\Akt(t,x)-A_t(t,x)|^2\d x\d t\\
			&\leq C\iint_{\Lambda_3^k}(|\vzdk(x{-}t)-\vzd(x{-}t)|^2+|\vuk(x{-}t)-\vu(x{-}t)|^2)\d x\d t\\
			&\quad\,+ C\iint_{\Lambda_3^k}\left(\left|\vzdk(x{+}t)-\vzd(-\omega(x{+}t))\omd(x{+}t)\right|^2+\left|\vuk(x{+}t)+\vu(-\omega(x{+}t))\omd(x{+}t)\right|^2\right)\d x\d t\\
			&\leq C\left(\Vert\vzdk-\vzd\Vert^2_{L^2(0,+\infty)}\!+\!\Vert\vuk-\vu\Vert^2_{L^2(0,+\infty)}\!+\!\!\iint_{\Lambda_3^k}\!\!\!\Big((|\vzd|^2+|\vu|^2)({-}\omega(x{+}t))\Big)\omd(x{+}t)^2\d x\d t\right)\\
			&\leq C\left(\Vert\vzdk-\vzd\Vert^2_{L^2(0,+\infty)}+\Vert\vuk-\vu\Vert^2_{L^2(0,+\infty)}+\int_{-\omega(\ellzk)}^{\ellz}(|\vzd(s)|^2+|\vu(s)|^2)\d s\right).
		\end{align*}
		\endgroup
		As regards $\Lambda^k_4$ we have:
		\begingroup
		\allowdisplaybreaks
		\begin{align*}
			&\quad\,\iint_{\Lambda_4^k}|\Akt(t,x)-A_t(t,x)|^2\d x\d t=\iint_{\Lambda_4^k}|\Akt(t,x)|^2\d x\d t\\
			&\leq C\left(\iint_{\Lambda_4^k}(|\vzdk(x{-}t)|^2+|\vuk(x{-}t)|^2)\d x\d t+\iint_{\Lambda_4^k}(|\vzdk(x{+}t)|^2+|\vuk(x{+}t)|^2)\d x\d t\right)\\
			&\leq C\left(\int_{-\omega(\ellzk)}^{\ellzk}(|\vzdk(s)|^2+|\vuk(s)|^2)\d s+\int_{\ellz}^{\ellzk}(|\vzdk(s)|^2+|\vuk(s)|^2)\d s\right)\\
			&\leq C\left(\Vert\vzdk-\vzd\Vert^2_{L^2(0,+\infty)}+\Vert\vuk-\vu\Vert^2_{L^2(0,+\infty)}+\int_{-\omega(\ellzk)}^{\ellz}(|\vzd(s)|^2+|\vu(s)|^2)\d s\right).
		\end{align*}
		\endgroup
		We then consider $\Lambda^k_6\cup\Lambda^k_7$, so that:
		\begin{align*}
			\iint_{\Lambda_6^k\cup\Lambda^k_7}|\Akt(t,x)-A_t(t,x)|^2\d x\d t=\iint_{\Lambda_6^k}|\Akt(t,x)|^2\d x\d t+\iint_{\Lambda_7^k}|A_t(t,x)|^2\d x\d t.
		\end{align*}
		Since by assumptions $\ellk\to\ell$ uniformly in $[0,T]$, we deduce $\Lambda_7^k\to\emptyset$ in measure, and so the second integral goes to zero as $k\to+\infty$, while for the first one we estimate:
		\begingroup
		\allowdisplaybreaks
		\begin{align*}
			&\quad\,\iint_{\Lambda_6^k}|\Akt(t,x)|^2\d x\d t\\
			&\leq C\left(\iint_{\Lambda_6^k}\!\!(|\vzdk(x{-}t)|^2+|\vuk(x{-}t)|^2)\d x\d t+\iint_{\Lambda_6^k}\!\!\Big((|\vzdk|^2+|\vuk|^2)(-\omk(x{+}t))\Big)|\omdk(x{+}t)|^2\d x\d t\right)\\
			&\leq C\max\limits_{r\in D^k_\varphi(0,T)}|(\varphi^k)^{-1}(r)-\varphi^{-1}(r)|\int_{0}^{\ellzk}(|\vzdk(s)|^2+|\vuk(s)|^2)\d s\\
			&\quad+C\max\limits_{r\in D^k_\psi(0,T)}|\omk(r)-\omega(r)|\int_{-\omk(\psi(T)\wedge\psik(T))}^{\ellzk}(|\vzdk(s)|^2+|\vuk(s)|^2)\d s\\
			&\leq C\left(\max\limits_{r\in D^k_\varphi(0,T)}|(\varphi^k)^{-1}(r)-\varphi^{-1}(r)|+\max\limits_{r\in D^k_\psi(0,T)}|\omk(r)-\omega(r)|\right)(\Vert\vzdk\Vert^2_{L^2(0,+\infty)}+\Vert\vuk\Vert^2_{L^2(0,+\infty)})\\
			&\leq C\left(\max\limits_{r\in D^k_\varphi(0,T)}|(\varphi^k)^{-1}(r)-\varphi^{-1}(r)|+\max\limits_{r\in D^k_\psi(0,T)}|\omk(r)-\omega(r)|\right).
		\end{align*}
		\endgroup
		Applying Lemma~\ref{unifinv} for the sequence of functions $\{\varphik\}_{k\in\enne}$ and Lemma~\ref{omega} we deduce that this last integral vanishes as $k\to+\infty$. The last term to treat is the integral over $\Lambda^k_5$:
		\begingroup
		\allowdisplaybreaks
		\begin{align*}
			&\quad\,\iint_{\Lambda_5^k}|\Akt(t,x)-A_t(t,x)|^2\d x\d t\\
			&\leq C\iint_{\Lambda_5^k}|\vzdk(x{-}t)-\vzd(x{-}t)|^2\d x\d t+C\iint_{\Lambda_5^k}|\vuk(x{-}t)-\vu(x{-}t)|^2\d x\d t\\
			&\quad+C\iint_{\Lambda_5^k}\left|\left((\vzdk-\vuk)(-\omk(x{+}t))\right)\omdk(x{+}t)-\Big((\vzd-\vu)(-\omega(x{+}t)\Big)\omd(x{+}t)\right|^2\d x\d t\\
			&\leq C\Vert\vzdk-\vzd\Vert^2_{L^2(0,+\infty)}+C\Vert\vuk-\vu\Vert^2_{L^2(0,+\infty)}\\
			&\quad+C\int_{D^k_\psi(0,T)}^{}\left|\left((\vzdk-\vuk)(-\omk(s))\right)\omdk(s)-\Big((\vzd-\vu)(-\omega(s))\Big)\omd(s)\right|^2\d s.
		\end{align*}
		\endgroup
		Applying Lemma~\ref{composition} to this last integral and putting together all the previous estimates, by \eqref{convlnu} and \eqref{convdata} we finally conclude that $\Akt\to A_t$ in $L^2((0,T)\times(0,+\infty))$. Reasoning exactly in the same way one also gets $\Akx\to A_x$ in $L^2((0,T)\times(0,+\infty))$ and so the Proposition~is proved.
	\end{proof}
	Now we can deal with the convergence of the sequence of auxiliary functions $\{\vk\}_{k\in\enne}$. We only need a short Lemma. Before the statement we introduce the following notation: here and henceforth by $A\Delta B$ we mean the symmetric difference of the sets $A$ and $B$; if moreover both sets depend on time and space, we write $(A\Delta B)(t,x)$ instead of $A(t,x)\Delta B(t,x)$.
	\begin{lemma}
		\label{diffconv}
		Let $T<\frac{\ellz}{2}$ and assume $\ellk$ uniformly converges to $\ell$ in $[0,T]$, then the map $(t,x)\mapsto|(R^k\Delta R) (t,x)|$ uniformly converges to zero in $[0,T]\times[0,+\infty)$.
	\end{lemma}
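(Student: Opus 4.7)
The strategy is a case analysis on the subregion of $\Omega'$ (resp.\ $\Omega'^k$) containing $(t,x)$, adopting the convention $R(t,x)=\emptyset$ whenever $(t,x)\notin\Omega'$ (and analogously for $R^k$). The structural point behind the proof is that in \eqref{bordi} the left boundary $\gamma_1$ consists of pure characteristics, independent of $\ell$, while the right boundary $\gamma_2$ depends on $\ell$ only through the function $\omega$ and the switching time $\psi^{-1}(t{+}x)$. The uniform convergences needed, namely $\ellk\to\ell$ in $C^0([0,T])$ (and hence $\ellzk\to\ellz$), $(\psik)^{-1}\to\psi^{-1}$ on $D^k_\psi(0,T)$ via Lemma~\ref{unifinv}, and $\omk\to\omega$ on $D^k_\psi(0,T)$ via Lemma~\ref{omega}, are already in hand.

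\textbf{Matching subregions.} If $(t,x)\in(\Omega_1'\cap\Omega_1'^k)_T$ or $(\Omega_2'\cap\Omega_2'^k)_T$, the formulas for $\gamma_i$ and $\gamma_i^k$ coincide verbatim, hence $R(t,x)=R^k(t,x)$. If $(t,x)\in(\Omega_3'\cap\Omega_3'^k)_T$, the only discrepancy lies in $\gamma_2$: on the common part of the $\tau$-interval one has
\begin{equation*}
|\gamma_2^k(\tau;t,x)-\gamma_2(\tau;t,x)|\leq \max_{r\in D^k_\psi(0,T)}|\omk(r)-\omega(r)|,
\end{equation*}
and the extra sliver near the switching time has horizontal width $|(\psik)^{-1}(t{+}x)-\psi^{-1}(t{+}x)|$ and bounded height. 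Integrating over $\tau\in(0,t)\subset(0,T)$ yields a bound of the form $|(R^k\Delta R)(t,x)|\leq \eps^k$, uniform in $(t,x)$, with $\eps^k\to 0$ by Lemmas~\ref{unifinv}--\ref{omega}.

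\textbf{Transition strips.} The remaining cases correspond to $(t,x)$ lying either in different subregions of $\Omega'$ and $\Omega'^k$, or in only one of $\Omega,\Omega^k$; these are the regions $\Lambda^k_3,\dots,\Lambda^k_7$ of \eqref{partition}. They are all thin strips around the lines $\{t{=}x\}$, $\{t{+}x=\ellz\wedge\ellzk\}$, or the graphs of $\ell$ and $\ellk$, with widths controlled by $|\ellzk-\ellz|$ and $\|\ellk-\ell\|_{C^0([0,T])}$. Two observations keep both $|R(t,x)|$ and $|R^k(t,x)|$ small on these strips: (i) the identity $\omega(\ellz)=\varphi(\psi^{-1}(\ellz))=\varphi(0)=-\ellz$ (and analogously for $\omk$), which ensures that the $\Omega_3'$-formula for $\gamma_2$ glues continuously with the $\Omega_1'/\Omega_2'$-formula across $t{+}x=\ellz$, so the two expressions differ on the transition strip by $O(|\ellzk-\ellz|)$ in a way that is square-integrable in $\tau$; and (ii) the degeneracy
\begin{equation*}
\gamma_2(0;t,x)-\gamma_1(0;t,x)=t-x-\omega(t{+}x)\longrightarrow 0\quad\text{as }x\to\ell(t)^-,
\end{equation*}
since at $x=\ell(t)$ one has $\omega(t{+}x)=\varphi(t)=t-\ell(t)$, which forces $|R(t,x)|\to 0$ uniformly as $(t,x)$ approaches the moving boundary.

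\textbf{Main obstacle.} The delicate point is the uniform quantitative version of (ii): one must dominate $|R(t,x)|$ by a modulus of continuity (depending only on the limit $\ell$ through the continuity of $\omega$) applied to the $C^0$-distance of $(t,x)$ from the graph of $\ell$, and then prove the analogous uniform estimate for $|R^k(t,x)|$ with a modulus independent of $k$, exploiting the uniform convergence $\omk\to\omega$. Once this is in place, summing the estimates over the finitely many pieces of the partition \eqref{partition} yields the claimed uniform vanishing of $|(R^k\Delta R)(t,x)|$ in $[0,T]\times[0,+\infty)$.
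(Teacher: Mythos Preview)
Your overall strategy---case analysis on the partition $\Lambda^k_i$ of \eqref{partition}---is exactly the paper's, and your treatment of the matching subregions $\Lambda^k_1,\Lambda^k_2,\Lambda^k_5$ is correct. (The paper even absorbs your ``sliver'' near the switching time into the same width bound: one checks that the two formulas for $\gamma_2$ glue continuously at $\tau=\psi^{-1}(t{+}x)$, so the discrepancy $|\gamma_2^k-\gamma_2|$ is everywhere at most $|\omk(t{+}x)-\omega(t{+}x)|$.)

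Where you part ways with the paper is on the transition strips, and here you both overcomplicate the argument and leave it unfinished. You frame the estimate on $\Lambda^k_6\cup\Lambda^k_7$ as a ``main obstacle'' requiring a $k$-uniform modulus of continuity for $|R^k(t,x)|$ near the moving boundary, and then stop short of providing it. The paper bypasses this entirely with a one-line algebraic inequality. For $(t,x)\in\Lambda^k_6$ one has $x\geq\ell(t)$, hence $t{+}x\geq\psi(t)$ and therefore $\omega(t{+}x)\geq\varphi(t)=t-\ell(t)\geq t-x$; since the maximal width of $R^k(t,x)$ over all levels $\tau$ equals $-\omk(t{+}x)-(x{-}t)$, this gives
\[
-\omk(t{+}x)-(x{-}t)\;\leq\;-\omk(t{+}x)+\omega(t{+}x)\;\leq\;\max_{r\in D^k_\psi(0,T)}|\omk(r)-\omega(r)|,
\]
and integrating over $\tau\in(0,T)$ yields $|(R^k\Delta R)(t,x)|=|R^k(t,x)|\leq T\max|\omk-\omega|$. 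The same bound covers $\Lambda^k_5$ and, symmetrically, $\Lambda^k_7$. For $\Lambda^k_3\cup\Lambda^k_4$ the paper does not estimate widths at all: it simply observes that $(R^k\Delta R)(t,x)$ is contained in the fixed rectangle $[0,\psi^{-1}(\ellzk)]\times[-\omega(\ellzk),\ellzk]$, whose area $\psi^{-1}(\ellzk)(\ellzk+\omega(\ellzk))$ tends to zero because $\psi^{-1}(\ellz)=0$ and $\omega(\ellz)=-\ellz$. No modulus of continuity is needed anywhere; the ``delicate point'' you identify dissolves once you use the inequality $\omega(t{+}x)\geq t-x$ on the relevant strip.
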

	\begin{proof}
		We assume without loss of generality that $\ellz<\ellzk$, the other cases being analogous. We then consider again the partition of $[0,T]\times[0,+\infty)$ given by the sets $\Lambda^k_i$, for $i=1,\dots,8$, introduced in the proof of Proposition~\ref{uniformA}.\\
		If $(t,x)\in \Lambda^k_1\cup\Lambda^k_2$, then $(R^k\Delta R)(t,x)=\emptyset$ and so $|(R^k\Delta R) (t,x)|=0$.\\
		If $(t,x)\in \Lambda^k_3\cup\Lambda^k_4$, then $(R^k\Delta R)(t,x)\subseteq[0,\psi^{-1}(\ellzk)]\times[-\omega(\ellzk),\ellzk]$ and so 
		\begin{equation*}
			|(R^k\Delta R) (t,x)|\leq \psi^{-1}(\ellzk)(\ellzk+\omega(\ellzk)).
		\end{equation*}
		If finally $(t,x)\in \Lambda^k_5\cup\Lambda^k_6\cup\Lambda^k_7$, then
		\begin{equation*}
		|(R^k\Delta R) (t,x)|\leq T\max \limits_{r\in D^k_\psi(0,T)}|\omk(r)-\omega(r)|.
		\end{equation*}
		We conclude recalling that $\omega(\ellz)=-\ellz$ and exploiting Lemma~\ref{omega}.
	\end{proof}
	\begin{prop}
		\label{uniform}
		 Assume \eqref{convlnu}, \eqref{convdata} and let $T$ be as in Proposition~\ref{bdd}. If $\ellk$ uniformly converges to $\ell$ in $[0,T]$, then $\vk$ uniformly converges to $ v$ in $[0,T]\times[0,+\infty)$.
	\end{prop}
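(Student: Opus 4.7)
The plan is to exploit representation formula \eqref{duhamv} and to reduce the proof to an absorption argument based on the smallness of $T$ dictated by Proposition~\ref{bdd}. Writing
\begin{equation*}
v^k(t,x)-v(t,x) = \big(A^k(t,x)-A(t,x)\big)+\frac{(\nuk)^2-\nu^2}{8}H^k(t,x)+\frac{\nu^2}{8}\big(H^k(t,x)-H(t,x)\big),
\end{equation*}
I would control the three contributions separately on $[0,T]\times[0,+\infty)$.

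The first term is dealt with directly by Proposition~\ref{uniformA}, since the assumption that $\ellk\to\ell$ uniformly in $[0,T]$ is in force. The second term is easy: by Proposition~\ref{bdd} both $\vk$ and the sets $\Omega^k_T$ admit a uniform bound (the latter by $2\ellz T$ for $k$ large), hence $\|H^k\|_{C^0([0,T]\times[0,+\infty))}\le C$ uniformly in $k$, and the prefactor $|(\nuk)^2-\nu^2|/8$ vanishes by \eqref{convlnu}.

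The core of the argument is the third term. For every $(t,x)\in[0,T]\times[0,+\infty)$ I would decompose
\begin{equation*}
H^k(t,x)-H(t,x)=\iint_{R^k(t,x)\cap R(t,x)}\!\!\!(v^k-v)\d\sigma\d\tau+\iint_{R^k(t,x)\setminus R(t,x)}\!\!\!v^k\d\sigma\d\tau-\iint_{R(t,x)\setminus R^k(t,x)}\!\!\!v\d\sigma\d\tau,
\end{equation*}
which yields, using again the uniform bounds on $v^k$ and $v$ coming from Proposition~\ref{bdd},
\begin{equation*}
\|H^k-H\|_{C^0([0,T]\times[0,+\infty))}\le 2\ellz T\,\|v^k-v\|_{C^0([0,T]\times[0,+\infty))}+C\sup_{(t,x)}|(R^k\Delta R)(t,x)|.
\end{equation*}
By Lemma~\ref{diffconv} the supremum of $|(R^k\Delta R)(t,x)|$ is infinitesimal, so the last summand is some $\eps^k$. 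Combining the three estimates gives
\begin{equation*}
\|v^k-v\|_{C^0([0,T]\times[0,+\infty))}\le \eps^k+\frac{\nu^2\ellz T}{4}\|v^k-v\|_{C^0([0,T]\times[0,+\infty))},
\end{equation*}
and since $T$ satisfies the smallness constraint of Proposition~\ref{bdd}, namely $\frac{N^2\ellz T}{4}\le\tfrac12$, the coefficient in front of $\|v^k-v\|_{C^0}$ on the right-hand side is strictly less than one (for $k$ large, $\nu\le N$), so this term can be absorbed into the left-hand side. The conclusion follows.

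The genuine difficulty is precisely the mismatch between the domains $R^k(t,x)$ and $R(t,x)$ in the Duhamel term: one cannot just estimate pointwise since $v^k$ and $v$ live on different time-dependent domains. The uniform control of the symmetric difference provided by Lemma~\ref{diffconv} together with the quantitative smallness of $T$ are the two ingredients that make the absorption argument go through.
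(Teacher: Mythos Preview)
Your proof is correct and follows essentially the same route as the paper's: representation formula~\eqref{duhamv}, Proposition~\ref{uniformA} for the $A^k-A$ piece, Lemma~\ref{diffconv} for the mismatch of the Duhamel domains, and an absorption argument using the smallness of $T$ from Proposition~\ref{bdd}. The only cosmetic differences are that the paper writes the splitting as $(A^k-A)+\frac{|(\nuk)^2-\nu^2|}{8}H+\frac{(\nuk)^2}{8}(H^k-H)$ (with $H$ rather than $H^k$ in the middle term and $(\nuk)^2$ rather than $\nu^2$ as the last coefficient) and decomposes $H^k-H$ as $\iint_{R^k}(v^k-v)+\iint_{R^k\Delta R}|v|$ instead of your three-piece version; both variants lead to the same absorption inequality.
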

	\begin{proof}
		Exploiting representation formula \eqref{duhamv} we deduce that:
		\begingroup
		\allowdisplaybreaks 
		\begin{align*}
			&\quad\,\,\Vert \vk-v\Vert_{C^0([0,T]\times[0,+\infty))}\\
			&\leq \Vert \Ak-A\Vert_{C^0([0,T]\times[0,+\infty))}+\frac{|(\nuk)^2-\nu^2|}{8}\Vert H\Vert_{C^0([0,T]\times[0,+\infty))}+\frac{(\nuk)^2}{8}\Vert \Hk-H\Vert_{C^0([0,T]\times[0,+\infty))}\\
			&\leq \Vert \Ak-A\Vert_{C^0([0,T]\times[0,+\infty))}+\frac{|(\nuk)^2-\nu^2|}{8}\Vert H\Vert_{C^0([0,T]\times[0,+\infty))}\\
			&\quad+\frac{N^2}{8}\left\Vert \iint_{R^k}|\vk-v|+\iint_{R^k\Delta R}|v|\right\Vert_{C^0([0,T]\times[0,+\infty))}\\
			&\leq \Vert \Ak-A\Vert_{C^0([0,T]\times[0,+\infty))}+\frac{|(\nuk)^2-\nu^2|}{8}\Vert H\Vert_{C^0([0,T]\times[0,+\infty))}+\frac{N^2}{8}|\Omega^k_T|\Vert \vk-v\Vert_{C^0([0,T]\times[0,+\infty))}\\
			&\quad+\frac{N^2}{8}\Vert|R^k\Delta R|\Vert_{C^0([0,T]\times[0,+\infty))}\Vert v\Vert_{C^0([0,T]\times[0,+\infty))}\\
			&\leq \Vert \Ak-A\Vert_{C^0([0,T]\times[0,+\infty))}+\frac{|(\nuk)^2-\nu^2|}{8}\Vert H\Vert_{C^0([0,T]\times[0,+\infty))}+\frac 12 \Vert \vk-v\Vert_{C^0([0,T]\times[0,+\infty))}\\
			&\quad+\frac{N^2}{8}\Vert|R^k\Delta R|\Vert_{C^0([0,T]\times[0,+\infty))}\Vert v\Vert_{C^0([0,T]\times[0,+\infty))},
		\end{align*}
		\endgroup
		and so we get:
		\begin{align*}
			\quad\,\Vert \vk-v\Vert_{C^0([0,T]\times[0,+\infty))}&\leq 2\Vert \Ak-A\Vert_{C^0([0,T]\times[0,+\infty))}+\frac{|(\nuk)^2-\nu^2|}{4}\Vert H\Vert_{C^0([0,T]\times[0,+\infty))}\\
			&\quad+\frac{N^2}{4}\Vert|R^k\Delta R|\Vert_{C^0([0,T]\times[0,+\infty))}\Vert v\Vert_{C^0([0,T]\times[0,+\infty))}.
		\end{align*}
		Letting $k\to+\infty$ we deduce that by Proposition~\ref{uniformA} the first term goes to zero, by \eqref{convlnu} the second one goes trivially to zero and by Lemma~\ref{diffconv} the third one goes to zero too. So we conclude.
	\end{proof}
	\begin{prop}\label{sobolev}
		Assume \eqref{convlnu}, \eqref{convdata} and let $T$ be as in Proposition~\ref{bdd}. If $\elldk\to\elld$ in $L^1(0,T)$, then $\vk\to v$ in $H^1((0,T)\times(0,+\infty))$.
	\end{prop}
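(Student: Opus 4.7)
The plan is to reduce the assertion to showing that each of the three terms in the decomposition coming from \eqref{duhamv}, namely $A^k$, the constant $(\nu^k)^2/8$, and $H^k$, produces the right convergence when assembled. Since Proposition~\ref{uniform} already gives $\vk\to v$ uniformly on $[0,T]\times[0,+\infty)$, and all these functions are supported in $[0,T]\times[0,\ellk(T)\vee\ell(T)]$, a set whose measure is uniformly bounded, we immediately obtain $\vk\to v$ in $L^2((0,T)\times(0,+\infty))$. Thus it suffices to prove $\vkt\to v_t$ and $\vkx\to v_x$ in $L^2((0,T)\times(0,+\infty))$.

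Differentiating \eqref{duhamv} yields
\begin{equation*}
\vkt - v_t = (\Akt - A_t) + \frac{(\nuk)^2-\nu^2}{8}H_t + \frac{(\nuk)^2}{8}(\Hkt - H_t),
\end{equation*}
and analogously for the space derivative. By Proposition~\ref{H1A} the first term vanishes in $L^2$, by \eqref{convlnu} together with $H_t\in L^2$ the second does too. So the heart of the argument is to prove $\Hkt\to H_t$ (and $\Hkx\to H_x$) in $L^2((0,T)\times(0,+\infty))$. For this I would use the explicit formulas \eqref{Hder} and reproduce the same partition $\{\Lambda^k_i\}_{i=1}^{8}$ of $[0,T]\times[0,+\infty)$ employed in Proposition~\ref{H1A}, handling each region separately.

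On $\Lambda^k_1\cup\Lambda^k_2$ the two expressions differ only through $\vk-v$ inside a single-variable integral over a bounded interval, so the uniform convergence from Proposition~\ref{uniform} directly gives a pointwise uniform bound on $|\Hkt-H_t|$, hence $L^2$ convergence. On $\Lambda^k_3\cup\Lambda^k_4$ and on $\Lambda^k_6\cup\Lambda^k_7$, the domains of integration shrink: either because the region itself has vanishing measure (it is contained in a strip determined by $|\ellzk-\ellz|$ or by $\max_r|\omk(r)-\omega(r)|$), or because the integrated paths have vanishing length; together with the equiboundedness of $\vk$ given by Proposition~\ref{bdd}, Lemma~\ref{unifinv} applied to $\varphik$, and Lemma~\ref{omega}, each such contribution is $o(1)$.

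The delicate region is $\Lambda^k_5$, where in $H^k_t$ (and $H^k_x$) the third case of \eqref{Hdert} appears, bringing in the term
\begin{equation*}
\dot\omk(x{+}t)\int_0^{\psi^{-1,k}(x{+}t)}\vk(\tau,\tau{-}\omk(x{+}t))\,\mathrm{d}\tau,
\end{equation*}
to be compared with its limit analogue. The obstacle is that $\dot\omk$ converges only in $L^1$, while we need an $L^2$ estimate. My plan is to split the difference into a factor in which $\dot\omk$ is fixed and the remaining data vary (controlled in $L^\infty$ via uniform convergence of $\vk$, $\psi^{-1,k}$ and $\omk$ on $D^k_\psi(0,T)$, plus dominated convergence using Lemma~\ref{omega}), and a factor in which the remaining data are fixed at their limits and only $\dot\omk-\dot\omega$ varies. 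The latter is exactly of the form to which Lemma~\ref{composition} applies, possibly after an approximation by continuous functions as done there; the uniform bound $0<\dot\omk\le 1$ from \eqref{omegadot} turns the $L^1$ convergence of $\dot\omk$ into what we need in $L^2$ because it multiplies a quantity that is bounded and converges uniformly. Combining all regions and applying the parallel computation for $H^k_x$ using \eqref{Hderx} concludes the proof.
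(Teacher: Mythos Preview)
Your proposal is correct and follows essentially the same route as the paper's own proof: reduce to $L^2$ via Proposition~\ref{uniform}, decompose $\vkt-v_t$ through \eqref{duhamv}, dispatch the $A$-term with Proposition~\ref{H1A} and the scalar term with \eqref{convlnu}, and then treat $\Hkt-H_t$ (and $\Hkx-H_x$) via the explicit formulas \eqref{Hder} together with the partition $\{\Lambda^k_i\}$ and Lemmas~\ref{omega}--\ref{composition}. The paper is simply far more terse, writing only ``exploiting the explicit formulas \eqref{Hdert}, the fact that $\vk\to v$ uniformly, and reasoning as in the proof of Proposition~\ref{H1A}'' for the $H$-part; your sketch spells out precisely what that sentence means region by region.

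One small remark on your $\Lambda^k_5$ argument: you do not actually need to force the difficult term into the exact mold of Lemma~\ref{composition}. As the paper does later in Proposition~\ref{Cone}, one first uses that $\vk$ vanishes outside $\overline{\Omega^k}$ to rewrite $\int_0^{(\psik)^{-1}(x+t)}\vk(\tau,\tau-\omk(x{+}t))\d\tau=\int_0^{T}\vk(\tau,\tau-\omk(x{+}t))\d\tau$, which puts the term in the form $\dot\omk(s)\,\tilde f^k(-\omk(s))$ with $\tilde f^k\to\tilde f$ uniformly; then Lemma~\ref{composition} applies directly. Your alternative (use $|\dot\omk-\dot\omega|^2\le 2|\dot\omk-\dot\omega|$ and the $L^1$ convergence from Lemma~\ref{omega} against a bounded factor) is equally valid and in fact slightly simpler.
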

\begin{proof}
	First of all we notice that our hypothesis imply $\ellk\to\ell$ uniformly in $[0,T]$ and hence by Proposition~\ref{uniform} we get $\vk\to v$ uniformly in $[0,T]\times[0,+\infty)$ and so in particular in $L^2((0,T)\times(0,+\infty))$. To get the same result for the sequence of time derivatives $\{\vkt\}_{k\in\enne}$ we estimate:
	\begin{align*}
		&\quad\,\,\Vert\vkt-v_t\Vert_{L^2((0,T)\times(0,+\infty))}\\
		&\leq\Vert\Akt-A_t\Vert_{L^2((0,T)\times(0,+\infty))}+\frac{|(\nuk)^2-\nu^2|}{8}\Vert H_t\Vert_{L^2((0,T)\times(0,+\infty))}\!+\!\frac{N^2}{8}\Vert\Hkt-H_t\Vert_{L^2((0,T)\times(0,+\infty))}.
	\end{align*}
	By Proposition~\ref{H1A} we deduce that the first term goes to zero as $k\to+\infty$, by \eqref{convlnu} the second term goes trivially to zero, while for the third one one gets the same result exploiting the explicit formulas for $\Hkt$ and $H_t$ given by \eqref{Hdert}, the fact that $\vk\to v$ uniformly in $[0,T]\times[0,+\infty)$, and reasoning as in the proof of Proposition~\ref{H1A}.\par 
	With the same argument one can show that also $\vkx\to v_x$ in $L^2((0,T)\times(0,+\infty))$ and so the result is proved.
\end{proof}
	\begin{prop}\label{Cone}
		Assume \eqref{convlnu}, \eqref{convdata} and let $T$ be as in Proposition~\ref{bdd}. If $\elldk\to\elld$ in $L^1(0,T)$, then $\vk\to v$ in $C^0([0,T];H^1(0,+\infty))$ and in $C^1([0,T];L^2(0,+\infty))$.
	\end{prop}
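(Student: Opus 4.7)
The starting point is the representation formula \eqref{duhamv}, which for every $t\in[0,T]$ yields
\[
\vk(t,\cdot) - v(t,\cdot) = (\Ak - A)(t,\cdot) + \frac{(\nuk)^2 - \nu^2}{8}\, H(t,\cdot) + \frac{(\nuk)^2}{8}(\Hk - H)(t,\cdot),
\]
and analogous identities for the $t$- and $x$-partial derivatives. Since $H$ belongs to $C^0([0,\ellz/2]; H^1(0,+\infty))\cap C^1([0,\ellz/2]; L^2(0,+\infty))$ and $(\nuk)^2\to\nu^2$, the middle term trivially goes to zero in the required norms. The proof thus reduces to proving (i) $\Ak \to A$ in $C^0([0,T]; H^1(0,+\infty))$ with $\Akt \to A_t$ in $C^0([0,T]; L^2(0,+\infty))$, and (ii) the analogous statements for $\Hk$.

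For (i) I would revisit the case-by-case analysis of Proposition~\ref{H1A}, working on the same partition $\{\Lambda_i^k\}_{i=1}^{8}$ of $[0,T]\times[0,+\infty)$. The key observation is that nearly every error contribution appearing there---namely $\|\vzdk-\vzd\|_{L^2}$, $\|\vuk-\vu\|_{L^2}$, $\int_{-\omega(\ellzk)}^{\ellz}(|\vzd|^2+|\vu|^2)\d s$, $\max_{D^k_\varphi(0,T)}|(\varphik)^{-1}-\varphi^{-1}|$ and $\max_{D^k_\psi(0,T)}|\omk-\omega|$---is independent of $t$. Consequently, fixing $t$ and integrating only in $x$ (instead of over the product variable) produces the same right-hand sides, all of which vanish by Lemmas~\ref{unifinv}, \ref{omega} together with \eqref{convlnu}--\eqref{convdata}. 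The only term that seems to depend on $t$ is the composition one on $\Lambda_5^k$ involving $(\vzd-\vu)(-\omk(x+t))\omdk(x+t)$; however, the change of variable $r = x+t$ reformulates its $L^2$-in-$x$ norm as an $L^2$-norm on a subinterval of $D^k_\psi(0,T)$ that does not depend on $t$, to which Lemma~\ref{composition} applies verbatim.

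For (ii) I would use the explicit formulas \eqref{Hder}. Each of $\Hkt(t,\cdot)-H_t(t,\cdot)$ and $\Hkx(t,\cdot)-H_x(t,\cdot)$ is a finite sum of segment integrals of $\vk - v$ along characteristic lines, plus terms involving $\omk-\omega$ and $\omdk-\omd$. Since $\vk\to v$ uniformly on $[0,T]\times[0,+\infty)$ by Proposition~\ref{uniform}, those segment integrals are bounded pointwise in $(t,x)$ by $T\|\vk-v\|_{C^0}$, while the $\omega$-error terms are controlled by Lemmas~\ref{omega} and \ref{diffconv} independently of $t$; the latter also takes care of the region where exactly one of the points $(t,x)$ lies in its respective $\Omega$. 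Integrating in $x$ then yields a $t$-uniform bound that vanishes as $k\to+\infty$, and the analogous estimate combined with the uniform bound $\||R^k\Delta R|\|_{C^0}\to 0$ gives the convergence of $\Hk - H$ itself in $C^0([0,T]; L^2(0,+\infty))$. The main obstacle I anticipate is the $\Lambda_5^k$-term just mentioned: showing that the argument of Lemma~\ref{composition} really delivers a sup-in-$t$ (and not just an integrated-in-$t$) estimate is the only non-mechanical point; once this is in place, assembling (i), (ii) and the middle term in the displayed decomposition gives the stated convergence of $\vk$.
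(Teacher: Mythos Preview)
Your proposal is correct and takes essentially the same approach as the paper: decompose via \eqref{duhamv}, revisit the Proposition~\ref{H1A} estimates slice-by-slice for the $A$-part (with the key change of variable $s=x+t$ on $\Lambda_5^k$ reducing the composition term to Lemma~\ref{composition}, which indeed yields a $t$-independent bound since the resulting integration domain is always a subinterval of $D^k_\psi(0,T)$), and handle the $H$-part via \eqref{Hder} together with the uniform convergence $\vk\to v$ from Proposition~\ref{uniform}. The paper organizes the computation slightly differently---it splits the spatial integral at $x=\ell(t)$ rather than running through the full $\{\Lambda_i^k\}$ partition, and it points out that the $\omdk$-weighted term in $\Hkt$ on $\Omega_3'$ needs the same Lemma~\ref{composition} trick you reserved for the $A$-part---but the substance and the one nontrivial ingredient are identical.
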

	\begin{proof}
		By Proposition~\ref{uniform} we know that $\vk\to v$ uniformly in $[0,T]\times[0,+\infty)$, so to conclude it is enough to prove that
		\begin{equation*}
			\lim\limits_{k\to +\infty}\max\limits_{t\in[0,T]}\Vert\vkt(t)-v_t(t)\Vert_{L^2(0,+\infty)}= 0\quad\quad\mbox{and}\quad\quad\lim\limits_{k\to +\infty}\max\limits_{t\in[0,T]}\Vert\vkx(t)-v_x(t)\Vert_{L^2(0,+\infty)}= 0.
		\end{equation*}
		We actually prove only the validity of the first limit, the other one being analogous. So we fix $t\in[0,T]$ and we assume that $\ell(t)<\ellk(t)$, being the other cases even easier to deal with, then we estimate:
		\begingroup
		\allowdisplaybreaks
		\begin{align}\label{estvt}
			\nonumber\Vert\vkt(t)-v_t(t)\Vert_{L^2(0,+\infty)}&=\int_{0}^{\ell(t)}|\vkt(t,x)-v_t(t,x)|^2\d x+\int_{\ell(t)}^{\ellk(t)}|\vkt(t,x)|^2\d x\\
			&\leq 2\int_{0}^{\ell(t)}|\Akt(t,x)-A_t(t,x)|^2\d x+2\int_{\ell(t)}^{\ellk(t)}|\Akt(t,x)|^2\d x\\
			\nonumber&\quad+2\int_{0}^{\ell(t)}|\Hkt(t,x)-H_t(t,x)|^2\d x+2\int_{\ell(t)}^{\ellk(t)}|\Hkt(t,x)|^2\d x.
		\end{align}
		\endgroup
		Exploiting the explicit formulas \eqref{Hdert} and Proposition~\ref{bdd} it is easy to see that the second term in the last line is bounded by $C\Vert\ellk-\ell\Vert_{C^0([0,T])}$; always by \eqref{Hdert} we deduce that also the first term in the last line goes uniformly to zero in $[0,T]$. We want to remark that the only difficult part to estimate is the following:
		\begingroup
		\allowdisplaybreaks
		\begin{align*}
			&\quad\,\int_{\ellzk-t}^{\ell(t)}\left|\omdk(x{+}t)\int_{0}^{(\psik)^{-1}(x{+}t)}\!\!\!\!\!\vk(\tau,\tau-\omk(x{+}t))\d\tau-\omd(x{+}t)\int_{0}^{\psi^{-1}(x{+}t)}\!\!\!\!\!v(\tau,\tau-\omega(x{+}t))\d\tau\right|^2\d x\\
			&=\int_{\ellzk-t}^{\ell(t)}\left|\omdk(x{+}t)\int_{0}^{T}\vk(\tau,\tau-\omk(x{+}t))\d\tau-\omd(x{+}t)\int_{0}^{T}v(\tau,\tau-\omega(x{+}t))\d\tau\right|^2\d x\\
			&=\int_{\ellzk}^{\psi(t)}\left|\omdk(s)\int_{0}^{T}\vk(\tau,\tau-\omk(s))\d\tau-\omd(s)\int_{0}^{T}v(\tau,\tau-\omega(s))\d\tau\right|^2\d s,
		\end{align*}
		\endgroup
		which goes uniformly to zero applying Lemma~\ref{composition} and recalling that $\vk\to v$ uniformly in $[0,T]\times[0,+\infty)$.\par 
		The first term in the second line in \eqref{estvt} is estimated just as above using hypothesis \eqref{convdata}, while for the second term we reason as follows:
		\begingroup
		\allowdisplaybreaks
		\begin{align*}
			\int_{\ell(t)}^{\ellk(t)}|\Akt(t,x)|^2\d x&\leq 2\int_{\ell(t)}^{\ellk(t)}\!\!\!\!\!\!\!|(\vzdk+\vuk)(x{-}t)|^2\d x+2\int_{\ell(t)}^{\ellk(t)}\!\!\!\!\!\!\!\big|\big((\vzdk+\vuk)(-\omk(x{+}t))\big)\omdk(x{+}t)\big|^2\d x\\
			&\leq 2\int_{-\varphi(t)}^{-\varphik(t)}|(\vzdk+\vuk)(s)|^2\d s+2\int_{-\varphik(t)}^{-\omk(\psi(t))}|(\vzdk+\vuk)(s)|^2\d s\\
			&\leq 2\Vert\vzdk+\vuk-\vzd-\vu\Vert^2_{L^2(0,+\infty)}+2\int_{-\varphi(t)}^{-\omk(\psi(t))}|(\vzd+\vu)(s)|^2\d s,
		\end{align*}
		\endgroup
		which goes uniformly to zero since $-\omk\circ\psi\to -\varphi$ uniformly.\par 
		So we have proved that $\lim\limits_{k\to +\infty}\max\limits_{t\in[0,T]}\Vert\vkt(t)-v_t(t)\Vert_{L^2(0,+\infty)}= 0$ and we conclude.
	\end{proof}
	\begin{prop}\label{boundary}
	Assume \eqref{convlnu}, \eqref{convdata} and let $T$ be as in Proposition~\ref{bdd}. If $\elldk\to\elld$ in $L^1(0,T)$, then $\vkx(\cdot,0)\to v_x(\cdot,0)$ and $\sqrt{1-\elldk(\cdot)^2}\vkx(\cdot,\ellk(\cdot))\to \sqrt{1-\elld(\cdot)^2}v_x(\cdot,\ell(\cdot))$ in $L^2(0,T)$.
	\end{prop}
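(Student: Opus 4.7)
The plan is to work directly from the explicit boundary representations \eqref{vxpunto} and \eqref{vxelle}, applied both to $v$ and (with obvious modifications) to $\vk$; these hold a.e.\ on $(0,T)$ since $T<\ellz/2$. At $x=0$ the argument is essentially immediate: the data terms $-\dot z^k$, $\dot v_0^k$, $v_1^k$ in \eqref{vxpunto} converge in $L^2(0,T)$ by \eqref{convdata}, while the convolution $\int_0^t\vk(\tau,t-\tau)\d\tau$ converges to $\int_0^t v(\tau,t-\tau)\d\tau$ uniformly in $t\in[0,T]$ by Proposition~\ref{uniform}; together with \eqref{convlnu} this yields $\vkx(\cdot,0)\to v_x(\cdot,0)$ in $L^2(0,T)$ at once.

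For the endpoint $x=\ellk(t)$ I would first factor
\begin{equation*}
\sqrt{1-\elldk(t)^2}\,\vkx(t,\ellk(t)) = \frac{1}{\sqrt{1+\elldk(t)}}\cdot\sqrt{1-\elldk(t)}\,g^k(t),
\end{equation*}
where $g^k(t):=\vzdk(\ellk(t)-t)-\vuk(\ellk(t)-t)-\frac{(\nuk)^2}{4}\int_0^t\vk(\tau,\tau+\ellk(t)-t)\d\tau$ is the bracketed expression in \eqref{vxelle}. Since $\elldk\to\elld$ in $L^1(0,T)$, up to subsequences also a.e., and $1/\sqrt{1+\elldk}\in[1/\sqrt{2},1]$, dominated convergence (plus the usual subsequence principle) gives $1/\sqrt{1+\elldk}\to 1/\sqrt{1+\elld}$ in every $L^p(0,T)$. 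It thus suffices to prove $\sqrt{1-\elldk}\,g^k\to\sqrt{1-\elld}\,g$ in $L^2(0,T)$. The genuine difficulty concerns the composition terms $\sqrt{1-\elldk(t)}\,f^k(\ellk(t)-t)$ with $f^k\in\{\vzdk,\vuk\}$: without the weight, the $L^2$-norm of $f^k(\ellk(\cdot)-\cdot)$ need not be uniformly bounded in $k$, since $\elldk$ may approach $1$.

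The key observation is that the weight $\sqrt{1-\elldk}$ exactly cancels the Jacobian of the strictly decreasing change of variable $s=\ellk(t)-t$, which in absolute value equals $1-\elldk(t)$: concretely,
\begin{equation*}
\int_0^T (1-\elldk(t))\,|f^k(\ellk(t)-t)|^2\d t = \int_{\ellk(T)-T}^{\ellzk}|f^k(s)|^2\d s \le \|f^k\|^2_{L^2(0,+\infty)}.
\end{equation*}
With this at hand I would split $\sqrt{1-\elldk}\,f^k(\ellk(t){-}t)-\sqrt{1-\elld}\,f(\ell(t){-}t)$ into the piece $\sqrt{1-\elldk}\,(f^k{-}f)(\ellk(t){-}t)$, whose $L^2$-norm is bounded by $\|f^k-f\|_{L^2(0,+\infty)}$ via the displayed estimate and hence vanishes by \eqref{convdata}, and the piece $\sqrt{1-\elldk}\,f(\ellk(t){-}t)-\sqrt{1-\elld}\,f(\ell(t){-}t)$. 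For the latter, approximating $f$ by $f_\eps\in C^0_c$ with $\|f-f_\eps\|_{L^2}\le\eps$, in the spirit of Lemmas~\ref{omega} and \ref{composition}, reduces matters to a continuous and uniformly bounded integrand that tends to zero a.e.; dominated convergence handles it, while the error due to $f-f_\eps$ is absorbed once more by the same Jacobian cancellation applied to both $\sqrt{1-\elldk}$ and $\sqrt{1-\elld}$.

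The remaining contribution $\sqrt{1-\elldk(t)}\int_0^t\vk(\tau,\tau+\ellk(t)-t)\d\tau$ is comparatively harmless: by Propositions~\ref{bdd} and \ref{uniform} the functions $\vk$ are uniformly bounded on $[0,T]\times[0,+\infty)$ and converge uniformly to the continuous function $v$, and $\ellk\to\ell$ uniformly, so the inner integral converges uniformly in $t\in[0,T]$ to $\int_0^t v(\tau,\tau+\ell(t)-t)\d\tau$; since also $\sqrt{1-\elldk}\to\sqrt{1-\elld}$ in $L^2(0,T)$ by dominated convergence, the product converges in $L^2(0,T)$. Collecting the three contributions of $g^k$ yields $\sqrt{1-\elldk}\,g^k\to\sqrt{1-\elld}\,g$ in $L^2(0,T)$, and multiplying by $1/\sqrt{1+\elldk}$ closes the argument.
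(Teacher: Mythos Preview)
Your proof is correct and follows essentially the same route as the paper's: both start from the explicit formulas \eqref{vxpunto} and \eqref{vxelle}, both identify the Jacobian cancellation $\int_0^T(1-\elldk(t))|f(\ellk(t)-t)|^2\d t=\int|f(s)|^2\d s$ as the key point, and both finish with the $C^0_c$-approximation argument in the style of Lemma~\ref{composition}. The only organisational difference is that the paper first extends the upper limit of the inner integral from $t$ to $T$ (using $\vk\equiv 0$ outside $\overline{\Omega^k}$) so that the whole bracket becomes a function $g^k(s)$ of $s=t-\ellk(t)$ alone, proves $g^k\to g$ in $L^2(-\infty,0)$ in one stroke, and then treats the composition with $t\mapsto t-\ellk(t)$ globally; you instead keep $g^k$ as a function of $t$ and handle the data pieces and the integral piece separately. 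Both work; the paper's packaging is a little slicker, yours is more hands-on but equally valid.
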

	\begin{proof}
	 By \eqref{vxpunto} we recall that for a.e. $t\in(0,T)$ the following equality holds true:
		\begin{equation*}
			\vkx(t,0)=-\zdk(t)+\vzdk(t)+\vuk(t)+\frac{(\nuk)^2}{4}\int_{0}^{t}\vk(\tau,t{-}\tau)\d\tau,
		\end{equation*}
		and so using \eqref{convdata} and Propositions~\ref{bdd} and \ref{uniform} it is easy to deduce $\vkx(\cdot,0)\to v_x(\cdot,0)$ in $L^2(0,T)$.\\
		Moreover by \eqref{vxelle} we know that for a.e. $t\in(0,T)$ it holds:
		\begin{align*}
		\vkx(t,\ellk(t))&=\frac{1}{1+\elldk(t)}\left[\vzdk(\ellk(t){-}t)-\vuk(\ellk(t){-}t)-\frac{(\nuk)^2}{4}\int_{0}^{t}\vk(\tau,\tau+\ellk(t){-}t)\d\tau\right]\\
		&=\frac{1}{1+\elldk(t)}\left[\vzdk(\ellk(t){-}t)-\vuk(\ellk(t){-}t)-\frac{(\nuk)^2}{4}\int_{0}^{T}\vk(\tau,\tau+\ellk(t){-}t)\d\tau\right].
		\end{align*}
		We denote by $g^k(t{-}\ellk(t))$ the expression within the square brackets, i.e. $g^k(t{-}\ellk(t))=(1+\elldk(t))\vkx(t,\ellk(t))$, and we estimate:
		\begingroup
		\allowdisplaybreaks
		\begin{align*}
			&\quad\,\int_{0}^{T}\left|\sqrt{1-\elldk(t)^2}\vkx(t,\ellk(t))- \sqrt{1-\elld(t)^2}v_x(t,\ell(t))\right|^2\d t\\
			&=\int_{0}^{T}\left|\frac{\sqrt{1-\elldk(t)^2}}{1+\elldk(t)}g^k(t{-}\ellk(t))-\frac{\sqrt{1-\elld(t)^2}}{1+\elld(t)}g(t{-}\ell(t))\right|^2\d t\\
			&\leq 2\int_{0}^{T}\left|\frac{1}{1+\elldk(t)}\left(\sqrt{1-\elldk(t)^2}g^k(t{-}\ellk(t))-\sqrt{1-\elld(t)^2}g(t{-}\ell(t))\right)\right|^2\d t\\
			&\quad+2\int_{0}^{T}\left|\frac{1}{1+\elldk(t)}-\frac{1}{1+\elld(t)}\right|^2(1-\elld(t)^2)g(t{-}\ell(t))^2\d t\\
			&\leq 2\int_{0}^{T}\left|\sqrt{1-\elldk(t)^2}g^k(t{-}\ellk(t))-\sqrt{1-\elld(t)^2}g(t{-}\ell(t))\right|^2\d t\\
			&\quad+2\int_{0}^{T}\left|\elldk(t)-\elld(t)\right|(1-\elld(t)^2)g(t{-}\ell(t))^2\d t.
		\end{align*}
		\endgroup
		By dominated convergence the last integral vanishes when $k\to+\infty$, so we conclude if we prove that $\sqrt{1-\elldk(\cdot)^2}g^k(\cdot-\ellk(\cdot))\to\sqrt{1-\elld(\cdot)^2}g(\cdot-\ell(\cdot))$ in $L^2(0,T)$. To this aim we continue to estimate:
		\begin{align*}
		&\quad\,\int_{0}^{T}\left|\sqrt{1-\elldk(t)^2}g^k(t{-}\ellk(t))- \sqrt{1-\elld(t)^2}g(t{-}\ell(t))\right|^2\d t\\
		&\leq 2\int_{0}^{T}(1{-}\elldk(t)^2)\left|\big(g^k{-}g\big)(t{-}\ellk(t))\right|^2\d t+2\int_{0}^{T}\left|\sqrt{1{-}\elldk(t)^2}g(t{-}\ellk(t)){-}\sqrt{1{-}\elld(t)^2}g(t{-}\ell(t))\right|^2\!\!\!\!\d t.
		\end{align*}
		By \eqref{convlnu}, \eqref{convdata} and exploiting Proposition~\ref{uniform} it is easy to see that $g^k(\cdot)\to g(\cdot)$ in $L^2(-\infty,0)$ and so reasoning as in the proof of Lemma~\ref{composition} we get both terms go to zero as $k\to+\infty$. Hence we conclude.
	\end{proof}
	Summarising, in this Section we have obtained the following result: if we assume \eqref{convlnu}, \eqref{convdata} and if for some $T<\min\left\{\frac{\ellz}{2},\frac{2}{N^2\ellz}\right\}$ we know that $\dot{\ellk}\to \dot\ell$ in $L^1(0,T)$ (and hence $\ellk$ uniformly converges to $\ell$ in $[0,T]$), then the sequence of auxiliary functions $\{\vk\}_{k\in\enne}$ converges to $v$ in the following ways:
	\begin{equation}\label{allconv}
		\begin{aligned}
			&\bullet\vk\to v\mbox{ uniformly in }[0,T]\times[0,+\infty);\\
			&\bullet\vk\to v\mbox{ in }H^1((0,T)\times(0,+\infty));\\
			&\bullet\vk\to v\mbox{ in }C^0([0,T];H^1(0,+\infty))\mbox{ and in } C^1([0,T];L^2(0,+\infty));\\
			&\bullet\vkx(\cdot,0)\to v_x(\cdot,0)\mbox{ and }\sqrt{1-\elldk(\cdot)^2}\vkx(\cdot,\ellk(\cdot))\to \sqrt{1-\elld(\cdot)^2}v_x(\cdot,\ell(\cdot))\mbox{ in }L^2(0,T).
		\end{aligned}
	\end{equation}
\begin{rmk}\label{auxiliar}
	We recall that by the formula $u^k(t,x)=e^{-\nuk t/2}\vk(t,x)$ we deduce that all the convergences in \eqref{allconv} still remains true replacing $v^k$ and $v$ by the real solutions of the coupled problem $u^k$ and $u$ respectively.
\end{rmk}

	\section{The continuous dependence result}\label{sec3}
	The goal of this Section is proving that under assumptions \eqref{conv} there exists a small time $\overline T>0$ such that $\elldk\to\elld$ in $L^1(0,\overline T)$. In this case, by what we proved in Section \ref{sec2}, we will deduce as a byproduct that all the convergences in \eqref{allconv} hold true in $[0,\overline T]$. This will lead us to the main Theorem~of the paper, namely Theorem~\ref{finalthm}, in which we extend the result to arbitrary large time.
	
	To this aim, as in \cite{DMLazNar16} and \cite{RivNar}, we introduce the functions $\lambdak$ and $\lambda$ as the inverse of $\varphik$ and $\varphi$, respectively. By \eqref{repformula}, \eqref{Gamma} and by using the classical formula for the derivative of inverse functions we deduce that for $T<\frac{\ellz}{2}$ we can write:
	\begin{equation}\label{lambda}
		\lambdak(y)=\frac 12 \int_{-\ellzk}^{y}\left(1+\max\left\{\Theta_{\vk,\lambdak}^k(s),1\right\}\right)\d s,\quad\mbox{for every }y\in[-\ellzk,\varphik(T)],
	\end{equation}
	where for a.e. $y\in [-\ellzk,\varphik(T)]$ we considered the function:
	\begin{equation}\label{Lambdak}
		\Theta_{\vk,\lambdak}^k(y)=\frac{\left[\vzdk(-y)-\vuk(-y)-\frac{(\nuk)^2}{4}\int_{0}^{\lambdak(y)}\vk(\tau,\tau{-}y)\d\tau\right]^2}{2e^{\nuk\lambdak(y)}\kappak(\lambdak(y){-}y)}.
	\end{equation}
	Obviously the same formulas without apexes $k$ hold true also for $\lambda$.\\ 
	Furthermore let us define the set (see Figure~\ref{FigQs}): 
	\begin{equation*}
		Q^k:=\left\{(t,x)\in \erre^2\mid t\in[0,T]\mbox{ and } x\in[t-(\varphi(T)\wedge\varphik(T)),t+(\ellz\wedge\ellzk)] \right\},
	\end{equation*}
	and let us introduce the distance:
	\begin{equation}\label{distance}
		\d\left((v^k,\lambdak),(v,\lambda)\right):=\max\Big\{\Vert v^k-v\Vert_{L^2(Q^k)}, \max\limits_{y\in D^k_\varphi(0,T)}|\lambdak(y)-\lambda(y)|\Big\}.
	\end{equation}
	\begin{rmk}
		This distance is the analogue in our context of the one used in \cite{RivNar} to show that a certain operator (the right-hand side of representation formulas for $\vk$ and $\lambdak$, see \eqref{duhamv} and \eqref{lambda}) is a contraction in a suitable space. This will help us to reach our goal.
	\end{rmk}
	First of all let us prove that $D^k_\varphi(0,T)=[-(\ellz\wedge\ellzk),\varphi(T)\wedge\varphi^k(T)]$ is definitively nondegenerate.
	\begin{lemma}\label{nonempty} 
		Assume \eqref{conv} and let $T$ be as in Proposition~\ref{bdd}. Then there exists $K\in\enne$ such that for every $k\geq K$ the set $D^k_\varphi(0,T)$ is a nondegenerate closed interval.		
	\end{lemma}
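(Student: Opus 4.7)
The set $D^k_\varphi(0,T) = [-(\ellz \wedge \ellzk), \varphi(T) \wedge \varphi^k(T)]$ is always a closed interval, so the task reduces to showing $(\ellz \wedge \ellzk) + (\varphi(T) \wedge \varphi^k(T)) > 0$ for $k$ sufficiently large. Using the elementary identity $\min(a,b)+\min(c,d)=\min(a+c,a+d,b+c,b+d)$, this amounts to checking the simultaneous positivity, for $k$ large, of each of the four sums $\ellz+\varphi(T)$, $\ellz+\varphi^k(T)$, $\ellzk+\varphi(T)$, $\ellzk+\varphi^k(T)$.

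From \eqref{elleb}, $0\leq\dot\ell<1$ a.e.\ on $[0,T]$ and $\dot\ell\in L^\infty$, so $1-\dot\ell$ is strictly positive on a set of full measure in $[0,T]$ and $\int_0^T(1-\dot\ell)\d t>0$ strictly, yielding $\ell(T)<\ellz+T$ and hence $\ellz+\varphi(T)=:2\delta_0>0$. The identical argument applied to $\ellk$ shows $\ellzk+\varphi^k(T)>0$ for every $k$. Then \eqref{convlnu} takes care of the cross term $\ellzk+\varphi(T)=2\delta_0+(\ellzk-\ellz)>\delta_0$ as soon as $|\ellzk-\ellz|<\delta_0$, which holds for $k$ large.

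The delicate term is $\ellz+\varphi^k(T)=(\ellzk+\varphi^k(T))-(\ellzk-\ellz)$: although $\ellzk+\varphi^k(T)$ is strictly positive for each $k$, one needs a \emph{uniform} lower bound $I^k:=\ellzk+\varphi^k(T)=T-(\ellk(T)-\ellzk)\geq \delta_1>0$ in order to absorb the (possibly positive) correction $\ellzk-\ellz\to 0$. To produce such $\delta_1$, I would use the representation \eqref{lambda}, which yields the identity $2T=I^k+\int_{-\ellzk}^{\varphi^k(T)}\max(\Theta^k_{\vk,\lambdak},1)\d y$, reducing the task to a uniform upper bound on this integral strictly below $2T$. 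The integrand is controlled via the uniform lower bound on $\kappak$ coming from \eqref{convkappa} in the denominator of $\Theta^k_{\vk,\lambdak}$, coupled with the energy--dissipation balance \eqref{edb}, which yields a uniform a priori bound on the total dissipation $\int_{\ellzk}^{\ellk(T)}\kappak\d x$ in terms of data that converge by \eqref{convdata}. The main obstacle is precisely ruling out the degenerate scenario $I^k\to 0$: pointwise, $\vzdk$ and $\vuk$ appearing in the numerator of $\Theta^k_{\vk,\lambdak}$ converge only in $L^2$, and may in principle concentrate along the characteristic $y=\ellk(t)-t$; the crux of the argument is to exploit the integrated bound from \eqref{edb} together with the Griffith identity $G_0^k(1-\elldk)=\kappak(1+\elldk)$ (valid on $\{\elldk>0\}$) to prevent this concentration from driving $I^k$ to zero.
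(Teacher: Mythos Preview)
Your setup is correct: the only nontrivial point is the uniform positivity of $I^k:=\ellzk+\varphi^k(T)$, and the identity
\[
2T \;=\; I^k + \int_{-\ellzk}^{\varphi^k(T)} \max\{\Theta^k_{\vk,\lambdak}(s),1\}\d s
\]
you derive from \eqref{lambda} is exactly the right tool. This is also the pivot of the paper's argument.

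However, the route you propose for the last step --- invoking the energy--dissipation balance \eqref{edb} and the Griffith relation on $\{\elldk>0\}$ to rule out ``concentration'' of $\vzdk,\vuk$ --- is both unnecessary and incomplete as written. The concern you raise is in fact a non-issue: strong $L^2$ convergence already prevents $L^2$-mass from concentrating on sets of vanishing measure. Concretely, if $I^k\to 0$ along a subsequence, then the interval $E^k:=[-\varphi^k(T),\ellzk]$ has $|E^k|=I^k\to 0$, and for any sequence $f^k\to f$ in $L^2$ one has
\[
\int_{E^k}|f^k|^2 \;\le\; 2\int_{E^k}|f^k-f|^2 + 2\int_{E^k}|f|^2 \;\le\; 2\|f^k-f\|_{L^2}^2 + 2\int_{E^k}|f|^2 \;\longrightarrow\; 0.
\]
Applying this to $\vzdk$ and $\vuk$ (using \eqref{convdata}), together with the uniform bound on $\vk$ from Proposition~\ref{bdd} and the uniform lower bound on $\kappak$ from \eqref{convkappa}, one gets $\Theta^k_{\vk,\lambdak}(s)\le C\big(|\vzdk(-s)|^2+|\vuk(-s)|^2+1\big)$ and hence
\[
\int_{-\ellzk}^{\varphi^k(T)} \max\{\Theta^k_{\vk,\lambdak}(s),1\}\d s \;\le\; I^k + C\int_{E^k}\big(|\vzdk|^2+|\vuk|^2+1\big) \;\longrightarrow\; 0,
\]
contradicting $2T>0$. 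This is precisely the paper's argument, phrased as a contradiction on $\lambdak(\varphik(T))=T$. No appeal to \eqref{edb} or to the Griffith identity is needed; the elementary equi-integrability above closes the gap you identified.
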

\begin{proof}
	We argue by contradiction. Let us assume that there exists a subsequence (not relabelled) such that $D^k_\varphi(0,T)$ is empty or it is a singleton for every $k\in\enne$. Since $\ellzk\to\ellz$ and since $\varphi(T)>-\ell_0$ we can exclude the case $\varphi(T)\le-\ellzk<\varphik(T)$ for every $k$. This means that for every $k\in\enne$ we have $-\ellzk<\varphik(T)\leq-\ellz$ .\par
	\textit{CLAIM.} We claim that in this case $\lim\limits_{k\to +\infty}\max\limits_{y\in [-\ellzk,\varphik(T)]}|\lambdak(y)|=0$.\par
	If the claim is true we conclude; indeed by definition $\lambdak(\varphik(T))=T$ and hence we get a contradiction.\\
	To prove the claim we fix $y\in [-\ellzk,\varphik(T)]$ and we estimate:
	\begin{align*}
		\lambdak(y)&\leq\frac 12 \int_{-\ellzk}^{\varphik(T)}\left(1+\max\left\{\Theta_{\vk,\lambdak}^k(s),1\right\}\right)\d s \leq \int_{-\ellzk}^{\varphik(T)}\left(1+\frac 12 \Theta_{\vk,\lambdak}^k(s)\right)\d s\\
		&=\varphik(T)+\ellzk+\frac 14 \int_{-\ellzk}^{\varphik(T)}\frac{\left[\vzdk(-s)-\vuk(-s)-\frac{(\nuk)^2}{4}\int_{0}^{\lambdak(s)}\vk(\tau,\tau{-}s)\d\tau\right]^2}{e^{\nuk\lambdak(s)}\kappak(\lambdak(s){-}s)}\d s.
	\end{align*}
	Since $-\ellzk<\varphik(T)\leq -\ellz$, by \eqref{convlnu} we deduce that $\varphik(T)+\ellzk\to 0$ as $k\to+\infty$. Then we estimate the integral in the last line exploiting Proposition~\ref{bdd} and hypothesis \eqref{convkappa}:
	\begin{align*}
		&\quad\,\int_{-\ellzk}^{\varphik(T)}\frac{\left[\vzdk(-s)-\vuk(-s)-\frac{(\nuk)^2}{4}\int_{0}^{\lambdak(s)}\vk(\tau,\tau{-}s)\d\tau\right]^2}{e^{\nuk\lambdak(s)}\kappak(\lambdak(s){-}s)}\d s\\
		&\leq C \int_{-\ellzk}^{\varphik(T)}\left(\vzdk(-s)^2+\vuk(-s)^2+N^4M^2T^2\right)\d s=C \int_{-\varphik(T)}^{\ellzk}\left(\vzdk(s)^2+\vuk(s)^2+1\right)\d s.
	\end{align*}
	By hypothesis \eqref{convdata} and since $\varphik(T)+\ellzk\to 0$ we conclude.
\end{proof}
To make next Proposition clearer let us introduce for $y<0$ the functions $j^k(y):=|\dot{v}^k_0(-y)|+|v_1^k(-y)|+\chi_{[0,2\ellz]}(-y)$ and notice that by \eqref{convdata} the sequence $\{j^k\}_{k\in\enne}$ is equibounded in $L^2(-\infty,0)$. Here $\chi_{[0,2\ellz]}$ stands for the characteristic function of $[0,2\ell_0]$; the choice of such an interval is simply related to the fact that definitively $0<\ellzk<2\ellz$, since $\ellzk\to\ellz$ as $k\to +\infty$, and thus $D^k_\varphi(0,T)\subseteq[-2\ell_0,0]$ if the time $T$ is small enough. Moreover, to simplify the expression of $\Theta_{\vk,\lambdak}^k$ in \eqref{Lambdak}, we also define the functions $\displaystyle\rho^k(y):=\dot v_0^k({-}y)-v_1^k({-}y)-\frac{(\nuk)^2}{4}\int_{0}^{\lambdak(y)}\!\!\!\!\!v^k(\tau,\tau{-}y)\d\tau$ and using Proposition~\ref{bdd} we observe that 
\begin{equation}\label{inequality}
	|\rho^k(y)|\le C j^k(y),\quad\text{for a.e. }y\in D^k_\varphi(0,T),
\end{equation}
if the time $T$ is sufficiently small. In the same way we define the functions $j$ and $\rho$. Finally we introduce the nonnegative quantity:
\begin{equation}\label{etak}
	\eta^k:=\Vert j\Vert^2_{L^2(D^k_\varphi(0,T))}+\Vert j^k\Vert_{L^2(D^k_\varphi(0,T))}+\Vert j\Vert_{L^2(D^k_\varphi(0,T))}.
\end{equation}

\begin{prop}\label{estimatelprop}
	Assume \eqref{conv}, let $T$ be as in Proposition~\ref{bdd} and let $K$ be given by Lemma \ref{nonempty}. Then there exists a constant $C_1\geq 0$ independent of $k$ and an infinitesimal sequence $\{\eps^k\}_{k\in\enne}$ such that for every $k\geq K$ the following estimate  holds true:
	\begin{equation}\label{estimatel}
		\max\limits_{y\in D^k_\varphi(0,T)}|\lambdak(y)-\lambda(y)|\leq \eps^k+C_1\eta^k \d\left((v^k,\lambdak),(v,\lambda)\right).
	\end{equation}
\end{prop}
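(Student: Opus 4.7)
The plan is to subtract the representation formula \eqref{lambda} from its analogue for $\lambda$ and bound each resulting piece. For $y\in D^k_\varphi(0,T)$ I split
\[
\lambdak(y)-\lambda(y)=\frac{1}{2}\int_{-\ellzk}^{-(\ellz\wedge\ellzk)}\!\!\!(1+\max\{\Theta^k_{\vk,\lambdak},1\})\,\d s-\frac{1}{2}\int_{-\ellz}^{-(\ellz\wedge\ellzk)}\!\!\!(1+\max\{\Theta_{v,\lambda},1\})\,\d s+\frac{1}{2}\int_{-(\ellz\wedge\ellzk)}^{y}\!\!\!(\max\{\Theta^k,1\}-\max\{\Theta,1\})\,\d s.
\]
The two boundary integrals are over intervals of length $|\ellzk-\ellz|\to 0$ and can be bounded as in the CLAIM inside the proof of Lemma~\ref{nonempty}, using Proposition~\ref{bdd} and \eqref{convdata}; hence they contribute only to $\eps^k$. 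On the common interval the elementary inequality $|\max\{a,1\}-\max\{b,1\}|\le|a-b|$ reduces matters to estimating $\int_{D^k_\varphi(0,T)}|\Theta^k(s)-\Theta(s)|\,\d s$.

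To control this integral I would decompose
\[
\Theta^k-\Theta=\frac{(\rho^k)^2-\rho^2}{2e^{\nuk\lambdak}\kappak(\lambdak-y)}+\rho^2\left(\frac{1}{2e^{\nuk\lambdak}\kappak(\lambdak-y)}-\frac{1}{2e^{\nu\lambda}\kappa(\lambda-y)}\right),
\]
noting that, thanks to \eqref{convkappa} and the positivity of $\kappa$, both denominators are uniformly bounded below. For the first piece I would write $(\rho^k)^2-\rho^2=(\rho^k-\rho)(\rho^k+\rho)$ and exploit \eqref{inequality}, which gives $|\rho^k+\rho|\le C(j^k+j)$; Cauchy-Schwarz then yields
\[
\int_{D^k_\varphi(0,T)}|(\rho^k)^2-\rho^2|\,\d s\le C\|\rho^k-\rho\|_{L^2(D^k_\varphi(0,T))}\bigl(\|j^k\|_{L^2(D^k_\varphi(0,T))}+\|j\|_{L^2(D^k_\varphi(0,T))}\bigr).
\]
Rewriting $\rho^k-\rho$ as a sum of four terms, the contributions of $(\vzdk-\vzd)(-y)$, $(\vuk-\vu)(-y)$ and $(\nuk)^2-\nu^2$ are readily bounded by $\eps^k$ using \eqref{conv} and Proposition~\ref{bdd}; the piece $\frac{(\nuk)^2}{4}\int_{\lambda(y)}^{\lambdak(y)}\!\!v\,\d\tau$ is bounded pointwise by $C|\lambdak(y)-\lambda(y)|\le C\,\d((\vk,\lambdak),(v,\lambda))$.

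The crucial term is $\int_0^{\lambdak(y)}(\vk-v)(\tau,\tau-y)\,\d\tau$: Cauchy-Schwarz gives
\[
\int_{D^k_\varphi(0,T)}\left|\int_0^{\lambdak(y)}(\vk-v)(\tau,\tau-y)\,\d\tau\right|^2\d y\le T\iint_{\{\tau\in[0,\lambdak(y)],\,y\in D^k_\varphi(0,T)\}}|\vk-v|^2(\tau,\tau-y)\,\d\tau\,\d y,
\]
and the change of variables $(\tau,y)\mapsto(\tau,\sigma=\tau-y)$ sends the domain of integration into $Q^k$, by the very definition of $Q^k$. Hence this contribution is bounded by $T\|\vk-v\|^2_{L^2(Q^k)}\le T\,\d((\vk,\lambdak),(v,\lambda))^2$. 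Collecting all terms gives an estimate of the form $\eps^k+C\eta^k\,\d((\vk,\lambdak),(v,\lambda))$ for the numerator piece. For the denominator piece one expands $e^{\nuk\lambdak}\kappak(\lambdak-y)-e^{\nu\lambda}\kappa(\lambda-y)$ into three summands, each controlled respectively by $\|\kappak-\kappa\|_{C^0}$ (from \eqref{convkappa}), by the local Lipschitz regularity of $\kappa$ times $|\lambdak-\lambda|$, and by $|\nuk-\nu|+|\lambdak-\lambda|$; multiplying by $\int\rho^2\,\d y\le C\|j\|^2_{L^2(D^k_\varphi(0,T))}$ produces once again a contribution of the required form.

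The main obstacle is the change-of-variables argument in the third paragraph: one has to check carefully that as $y$ varies in $D^k_\varphi(0,T)$ and $\tau$ in $[0,\lambdak(y)]\subseteq[0,T]$ the pair $(\tau,\tau-y)$ indeed sweeps a subset of $Q^k$. The set $Q^k$ is engineered precisely so that this identification works, and once the inclusion is verified the desired $L^2(Q^k)$-norm appearing in the distance \eqref{distance} emerges naturally on the right-hand side, producing the required $\eta^k\,\d((\vk,\lambdak),(v,\lambda))$ structure.
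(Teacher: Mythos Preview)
Your proposal is correct and follows essentially the same route as the paper: both proofs subtract the representation formulas \eqref{lambda}, absorb the boundary integral over $[-\ellzk,-\ellz]$ into $\eps^k$ via the argument of Lemma~\ref{nonempty}, decompose $\Theta^k-\Theta$ into a numerator piece $(\rho^k)^2-\rho^2=(\rho^k-\rho)(\rho^k+\rho)$ bounded with \eqref{inequality} and a denominator piece handled by \eqref{convkappa} and the Lipschitz regularity of $\kappa$, and finally use the change of variables $(\tau,y)\mapsto(\tau,\tau-y)$ to land in $Q^k$ and produce the $\|v^k-v\|_{L^2(Q^k)}$ factor of the distance. The only cosmetic difference is that the paper keeps the factor $|j^k|+|j|$ inside the $L^1$ integral and applies Cauchy--Schwarz term by term, whereas you first pull out $\|\rho^k-\rho\|_{L^2(D^k_\varphi(0,T))}$ globally; both organizations yield exactly the three summands in $\eta^k$.
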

	\begin{proof}
		We assume $\ellz<\ellzk$, being the other cases even easier, and we estimate by means of \eqref{lambda} and \eqref{Lambdak}:
		\begin{equation}\label{important}
			\begin{aligned}
			&\max\limits_{y\in D^k_\varphi(0,T)}|\lambdak(y)-\lambda(y)|\\
			&\leq\int_{-\ellzk}^{-\ellz}\lambdadk(s)\d s+\frac 14\int_{D^k_\varphi(0,T)}\left|\frac{\rho^k(s)^2}{e^{\nuk\lambdak(s)}\kappak(\lambdak(s){-}s)}-\frac{\rho(s)^2}{e^{\nu\lambda(s)}\kappa(\lambda(s){-}s)}\right|\d s.
			\end{aligned}
		\end{equation}		
		The first term goes to zero as $k\to+\infty$ reasoning as in the proof of Lemma~\ref{nonempty}. For the second one, denoted by $I^k$, we estimate by using triangular inequality and exploiting assumption \eqref{convkappa} to get uniform bounds on $\kappak$:
		\begingroup
		\allowdisplaybreaks
		\begin{align*}
			I^k&\leq C\!\!\int_{D^k_\varphi(0,T)}\!\!\!\!\!\!\!\!\!\!\!\!\!\!\!\!e^{\nu\lambda(s)}\kappa(\lambda(s){-}s)\left|\rho^k(s)^2{-}\rho(s)^2\right|\!\!\d s+C\!\!\int_{D^k_\varphi(0,T)}\!\!\!\!\!\!\!\!\!\!\!\!\!\!\!\!\rho(s)^2\left|e^{\nuk\lambdak(s)}\kappak(\lambdak(s){-}s){-}e^{\nu\lambda(s)}\kappa(\lambda(s){-}s)\right|\!\!\d s\\
			&\leq C\int_{D^k_\varphi(0,T)}\left|\rho^k(s)^2-\rho(s)^2\right|\d s+C\int_{D^k_\varphi(0,T)}\rho(s)^2\left|e^{(\nuk-\nu)\lambdak(s)}-1\right|\d s\\
			&\quad+C\max\limits_{y\in [0,\ellz+T]}|\kappak(y)-\kappa(y)|\int_{D^k_\varphi(0,T)}\rho(s)^2\d s+C\max\limits_{y\in D^k_\varphi(0,T)}|\lambdak(y)-\lambda(y)|\int_{D^k_\varphi(0,T)}\rho(s)^2\d s .
		\end{align*}
		\endgroup
		By dominated convergence and by \eqref{convlnu} and \eqref{convkappa} the second and the third term go to zero as $k\to+\infty$, while for the first term we estimate by using the explicit expressions of $\rho^k$ and $\rho$ and recalling \eqref{inequality}:
		\begingroup
		\allowdisplaybreaks
		\begin{align*}
			&\quad\,\int_{D^k_\varphi(0,T)}\left|\rho^k(s)^2-\rho(s)^2\right|\d s\\
			&\leq \int_{D^k_\varphi(0,T)}\!\!\!\!\!\!\!\!|\vzdk({-}s)-\vzd({-}s)|\left(|\rho^k(s)|+|\rho(s)|\right)\d s+\int_{D^k_\varphi(0,T)}\!\!\!\!\!\!\!\!|\vuk({-}s)-\vu({-}s)|\left(|\rho^k(s)|+|\rho(s)|\right)\d s\\
			&\quad+\frac 14 \int_{D^k_\varphi(0,T)}(|\rho^k(s)|+|\rho(s)|)\left|(\nuk)^2\int_{0}^{\lambdak(s)}\vk(\tau,\tau{-}s)\d\tau-\nu^2\int_{0}^{\lambda(s)}v(\tau,\tau{-}s)\d\tau\right|\d s\\
			&\leq C \left(\Vert\vzdk-\vzd\Vert_{L^2(0,+\infty)}+\Vert\vuk-\vu\Vert_{L^2(0,+\infty)}\right)\left(\Vert j^k\Vert_{L^2(-\infty,0)}+\Vert j\Vert_{L^2(-\infty,0)}\right)\\
			&\quad+C\int_{D^k_\varphi(0,T)}(|j^k(s)|+|j(s)|)\left|(\nuk)^2\int_{0}^{\lambdak(s)}\vk(\tau,\tau{-}s)\d\tau-\nu^2\int_{0}^{\lambda(s)}v(\tau,\tau{-}s)\d\tau\right|\d s.
		\end{align*}
		\endgroup
		To deal with the last integral we first notice that for every $s\in D^k_\varphi(0,T)$ we have:
		\begingroup
		\allowdisplaybreaks
		\begin{align*}
			&\quad\,\left|(\nuk)^2\int_{0}^{\lambdak(s)}\vk(\tau,\tau{-}s)\d\tau-\nu^2\int_{0}^{\lambda(s)}v(\tau,\tau{-}s)\d\tau\right|\\
			&\leq|(\nuk)^2-\nu^2|\left|\int_{0}^{\lambdak(s)}\!\!\!\!v^k(\tau,\tau{-}s)\d\tau\right|+\nu^2\left|\int_{0}^{\lambdak(s)}\!\!\!\!(v^k-v)(\tau,\tau{-}s)\d\tau\right|+\nu^2\left|\int_{\lambda(s)}^{\lambdak(s)}\!\!\!\!v(\tau,\tau{-}s)\d\tau\right|\\
			&\leq C\left(|(\nuk)^2-\nu^2|+\left|\int_{0}^{T}(v^k-v)(\tau,\tau{-}s)\d\tau\right|+\max\limits_{y\in D^k_\varphi(0,T)}|\lambdak(y)-\lambda(y)|\right),
		\end{align*}
		\endgroup
		and so we deduce:
		\begin{align*}
			&\quad\max\limits_{y\in D^k_\varphi(0,T)}|\lambdak(y)-\lambda(y)|\leq \eps^k+I^k\\
			&\leq \eps^k+C\left(\Vert j\Vert^2_{L^2(D^k_\varphi(0,T))}+\Vert j^k\Vert_{L^2(D^k_\varphi(0,T))}+\Vert j\Vert_{L^2(D^k_\varphi(0,T))}\right) \d\left((v^k,\lambdak),(v,\lambda)\right)\\
			&= \eps^k+C\eta^k \d\left((v^k,\lambdak),(v,\lambda)\right),
		\end{align*}
		and we conclude.
	\end{proof}
	\begin{prop}\label{estimatevprop}
		Assume \eqref{conv}, let $T$ be as in Proposition~\ref{bdd} and let $K$ be given by Lemma \ref{nonempty}. Then there exists a constant $C_2\geq 0$ independent of $k$ and an infinitesimal sequence $\{\eps^k\}_{k\in\enne}$ such that for every $k\geq K$ the following estimate  holds true: 
		\begin{equation}\label{estimatev}
			\Vert v^k-v\Vert_{L^2(Q^k)}\leq \eps^k+C_2\sqrt{|D^k_\varphi(0,T)|}\d\left((v^k,\lambdak),(v,\lambda)\right).
		\end{equation}
	\end{prop}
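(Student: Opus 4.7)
The strategy parallels the proof of Proposition~\ref{estimatelprop}: apply the representation formula \eqref{duhamv} to both $\vk$ and $v$ to write
\begin{equation*}
\vk - v = (\Ak-A) + \frac{(\nuk)^2-\nu^2}{8}H + \frac{(\nuk)^2}{8}(\Hk-H),
\end{equation*}
take $L^2(Q^k)$-norms on both sides and apply the triangle inequality. The middle contribution, $\tfrac{|(\nuk)^2-\nu^2|}{8}\|H\|_{L^2(Q^k)}$, is infinitesimal by \eqref{convlnu} and the uniform boundedness of $H$ on $[0,T]\times[0,+\infty)$; it is absorbed into $\eps^k$.

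For $\|\Ak-A\|_{L^2(Q^k)}$, I would reuse the case-by-case analysis of Proposition~\ref{uniformA}, partitioning $Q^k$ through the sets $\Lambda^k_i$ and exploiting that $Q^k$ is a parallelogram along the characteristic direction, with $t-x\in D^k_\varphi(0,T)$ and $|Q^k|=T|D^k_\varphi(0,T)|$. On each piece, the pointwise bounds split into two types: contributions involving only the data differences $\vzk-\vz$, $\vuk-\vu$, $\zk-z$, $\ellzk-\ellz$, which once squared and integrated over $Q^k$ yield infinitesimal quantities and are absorbed in $\eps^k$; and residual $\Omega'_3$-contributions of the form $|\vz(-\omk(x{+}t))-\vz(-\omega(x{+}t))|$ together with $L^1$-integrals of $\vu$ between $-\omk$ and $-\omega$. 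The latter are controlled by Cauchy--Schwarz combined with the dominated convergence idea used in Lemmas~\ref{omega} and~\ref{composition}, after bounding $\max_{y\in D^k_\psi(0,T)}|\omk(y)-\omega(y)|$ by $C\max_{y\in D^k_\varphi(0,T)}|\lambdak(y)-\lambda(y)|$; such a comparison relies on the identity $-\omega(y)=y-2\psi^{-1}(y)$, on the bi-Lipschitz character of $\psi$ (which makes $\psi^{-1}$ Lipschitz with constant $1$), and on the relation $\ell(\lambda(y))=\lambda(y)-y$ linking $\lambdak-\lambda$ to $\ell^k-\ell$.

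For the third piece I would decompose
\begin{equation*}
(\Hk-H)(t,x)=\iint_{R^k(t,x)}(\vk-v)\d\sigma\d\tau+\left(\iint_{R^k(t,x)}v\d\sigma\d\tau-\iint_{R(t,x)}v\d\sigma\d\tau\right).
\end{equation*}
The first summand is estimated pointwise via Cauchy--Schwarz by $|R^k(t,x)|^{1/2}\|\vk-v\|_{L^2(R^k(t,x))}$; squaring, integrating over $Q^k$ and invoking Fubini together with $|R^k(t,x)|\le CT^2$ and $|Q^k|=T|D^k_\varphi(0,T)|$, one obtains the principal $C\sqrt{|D^k_\varphi(0,T)|}\|\vk-v\|_{L^2(Q^k)}$-type term (using that $\vk$, $v$ vanish outside $\overline{\Omega^k}$, $\overline{\Omega}$ to keep the integration domain effectively inside $Q^k$). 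The parenthesized difference is supported on $R^k(t,x)\triangle R(t,x)$, whose area is bounded by $CT\max_{y}|\omk(y)-\omega(y)|$ as in Lemma~\ref{diffconv}; combining with Proposition~\ref{bdd} and the $\omk\leftrightarrow\lambdak$ comparison above, this is also absorbed into $C\sqrt{|D^k_\varphi(0,T)|}\,\d((\vk,\lambdak),(v,\lambda))$.

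The main obstacle is the careful bookkeeping needed to separate the $\eps^k$-contributions from the $\d(\cdot,\cdot)$-contributions in each of the many case-by-case estimates, and especially the passage from $|\omk-\omega|$ on $D^k_\psi(0,T)$ to $|\lambdak-\lambda|$ on $D^k_\varphi(0,T)$, which requires exploiting the bi-Lipschitz structure of $\psi$ together with the characterisation of the debonding front via $\ell\circ\lambda(y)=\lambda(y)-y$. Once all contributions are collected and the constants are absorbed into a single $C_2$, one obtains the claimed inequality \eqref{estimatev}.
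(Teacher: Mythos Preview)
Your overall strategy coincides with the paper's: start from the representation formula, separate the three contributions $\Ak-A$, $\tfrac{(\nuk)^2-\nu^2}{8}H$, $\tfrac{(\nuk)^2}{8}(\Hk-H)$, and in each piece isolate an infinitesimal part from a part controlled by $\d((\vk,\lambdak),(v,\lambda))$. There are, however, two points where your sketch diverges from the paper and would need tightening.

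First, the paper does \emph{not} recycle the $\Lambda^k_i$ partition. It introduces a new decomposition $Q^k=\bigcup_{i=1}^{7}Q^k_i$ (Figure~\ref{FigQs}) tailored to the parallelogram $Q^k$. Since $T<\ell_0/2$ forces $\varphi(T),\varphik(T)<0$, one has $Q^k\subseteq\{t<x\}$, so $\Omega'_2$ is never met and the $z^k-z$ terms you list do not occur. The relevant pieces are: $Q^k_1$ (formulas for $A^k$ and $A$ coincide), small transition strips $Q^k_2,Q^k_3$ (vanishing measure, absorbed in $\eps^k$), $Q^k_4\subseteq(\Omega'^k_3)_T\cap(\Omega'_3)_T$, the slivers $Q^k_5,Q^k_6$ between the two graphs, and $Q^k_7$ (trivial). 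On $Q^k_4,Q^k_5,Q^k_6$ the paper does not rederive the $\omega^k\leftrightarrow\lambdak$ comparison; it simply quotes from \cite{RivNar}, Proposition~4.5, the pointwise bounds $|\omega^k(x{+}t)-\omega(x{+}t)|\le 2\max_{D^k_\varphi}|\lambdak-\lambda|$ on $Q^k_4$ (and the analogous ones on $Q^k_5,Q^k_6$) together with $|Q^k_5\cup Q^k_6|\le |D^k_\varphi(0,T)|\max_{D^k_\varphi}|\lambdak-\lambda|$. These yield directly $\|\Ak-A\|^2_{L^2(Q^k)}\le \eps^k+C|D^k_\varphi(0,T)|\,\d(\cdot,\cdot)^2$. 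Your derivation via $-\omega(y)=y-2\psi^{-1}(y)$ and the $1$-Lipschitz bound for $\psi^{-1}$ can be made to give the same inequalities, but note that the argument must be purely quantitative: at this stage of Section~\ref{sec3} no convergence of $\ellk$ is available, so the ``dominated convergence idea'' from Lemmas~\ref{omega}--\ref{composition} cannot be invoked to send anything to zero; every $\omega^k$-dependent residual must land in the $\d(\cdot,\cdot)$ term, not in $\eps^k$.

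Second, for $\Hk-H$ your justification ``$\vk,v$ vanish outside $\overline{\Omega^k},\overline{\Omega}$ so $R^k(t,x)$ is effectively inside $Q^k$'' is not correct as stated: when $\ell_0<\ell_0^k$ the set $R^k(t,x)$ can protrude into $\{\sigma-\tau\in(\ell_0,\ell_0^k]\}$, where $v\equiv 0$ but $v^k$ need not vanish. The paper sidesteps this by treating those points as part of the transition strips $Q^k_2,Q^k_3$ (vanishing measure, uniform bound from Proposition~\ref{bdd}), which is the clean way to absorb that sliver into $\eps^k$. With this correction the rest of your $\Hk-H$ estimate matches the paper's: on $Q^k_1$ one gets $C|D^k_\varphi(0,T)|\|\vk-v\|^2_{L^2(Q^k)}$, and on $Q^k_4,Q^k_5,Q^k_6$ the symmetric-difference term is controlled via $|R^k\Delta R|\le C|\omega^k-\omega|\le C\max_{D^k_\varphi}|\lambdak-\lambda|$.
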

	\begin{proof}
		\begin{figure}
			\centering
			\includegraphics[scale=.8]{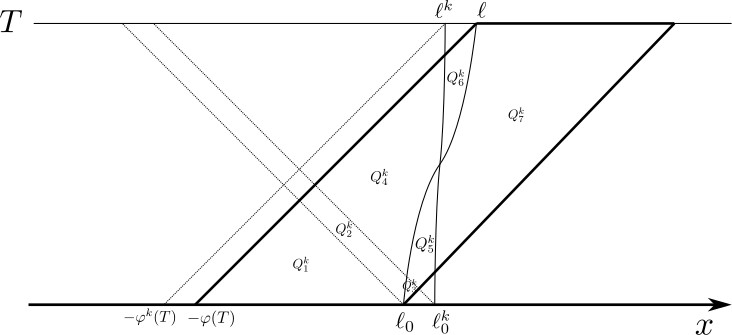}
			\caption{The partition of the set $Q^k$ via the sets $Q^k_i$, for $i=1,\dots,7$, in the case $\ellz<\ellzk$ and $\varphi(T)<\varphik(T)$.}\label{FigQs}
		\end{figure}
		We use again formula \eqref{duhamv} and we estimate:
		\begin{equation}\label{firstest}
			\begin{aligned}
			\Vert v^k-v\Vert_{L^2(Q^k)}&\leq\Vert \Ak-A\Vert_{L^2(Q^k)}+\frac{(\nuk)^2}{8}\Vert \Hk-H\Vert_{L^2(Q^k)}+\frac{|(\nuk)^2-\nu^2|}{8}\Vert H\Vert_{L^2(Q^k)}\\
			&\leq \eps^k+\Vert \Ak-A\Vert_{L^2(Q^k)}+\frac{N^2}{8}\Vert \Hk-H\Vert_{L^2(Q^k)}.
			\end{aligned}
		\end{equation}		
		Then we split $Q^k$ into seven parts, denoted by $Q^k_i$ for $i=1,\dots,7$, as in Figure \ref{FigQs}, so that: 
		\begin{equation}\label{splitA}
			\Vert \Ak-A\Vert^2_{L^2(Q^k)}\!=\!\!\iint_{Q_1^k\cup Q_2^k\cup Q^k_4}\!\!\!\!\!\!\!\!\!\!\!\!\!\!\!\!\!\!\!\!|\Ak(t,x)-A(t,x)|^2\!\d x\d t+\iint_{Q_3^k\cup Q_5^k}\!\!\!\!\!\!\!\!\!\!\!\!\Ak(t,x)^2\!\d x\d t+\iint_{Q_6^k}\!\!\!\!A(t,x)^2\!\d x\d t,
		\end{equation}
		and we estimate all the terms.\par 
		The integrals over $Q^k_1$, $Q^k_2$, $Q^k_3$ go easily to zero as $k\to+\infty$: indeed in $Q^k_1$ we use \eqref{convdata}, while for the integrals over $Q_2^k$ and $Q_3^k$ we exploit the equiboundedness of the sequence $\{\Ak\}_{k\in\enne}$ in $C^0([0,T]\times[0,+\infty)$ (see Proposition~\ref{bdd}) and the fact that $Q_2^k\cup Q^k_3$ converges in measure to the empty set. To estimate the remaining terms we reason as in \cite{RivNar}, Proposition~4.5. In that work the validity of the following estimates is proved:
		\begin{subequations}
			\begin{equation}\label{supa}
			|\omega^k(x{+}t)-\omega(x{+}t)|\le 2\max\limits_{y\in D^k_\varphi(0,T)}|\lambdak(y)-\lambda(y)|,\quad\text{if }(t,x)\in Q^k_4,
			\end{equation}
			\begin{equation}\label{supb}
			|(t{-}x)-\omega^k(x{+}t)|\le 2\max\limits_{y\in D^k_\varphi(0,T)}|\lambdak(y)-\lambda(y)|,\quad\text{if }(t,x)\in Q^k_5,
			\end{equation}
			\begin{equation}\label{supc}
			|(t{-}x)-\omega(x{+}t)|\le 2\max\limits_{y\in D^k_\varphi(0,T)}|\lambdak(y)-\lambda(y)|,\quad\text{if }(t,x)\in Q^k_6.
			\end{equation}
		\end{subequations}
	Moreover they also show that:
	\begin{equation}\label{cube}
		|Q^k_5\cup Q^k_6|\leq |D^k_\varphi(0,T)|\max\limits_{y\in D^k_\varphi(0,T)}|\lambdak(y)-\lambda(y)|.
	\end{equation}
	Exploiting \eqref{supb}, \eqref{supc}, \eqref{cube} and reasoning as in the proof of Proposition~4.5 in \cite{RivNar} one can deduce that: 
	\begin{equation*}
		\iint_{Q_5^k}\Ak(t,x)^2\d x\d t+\iint_{Q_6^k}A(t,x)^2\d x\d t\leq C|D^k_\varphi(0,T)|\max\limits_{y\in D^k_\varphi(0,T)}|\lambdak(y)-\lambda(y)|^2.
	\end{equation*}
	To estimate the integral over $Q^k_4$ we first of all notice that for $(t,x)\in Q^k_4$ we have:
	\begingroup
	\allowdisplaybreaks
	\begin{align*}
		|\Ak(t,x)-A(t,x)|^2&=\frac 14\left|\int_{x{-}t}^{-\omega^k(x{+}t)}(\vuk(s)-\vzdk(s))\d s-\int_{x{-}t}^{-\omega(x{+}t)}(\vu(s)-\vzd(s))\d s\right|^2\\
		&\leq C\left(\Vert\vuk-\vu\Vert^2_{L^2(0,+\infty)}+\Vert\vzdk-\vzd\Vert^2_{L^2(0,+\infty)}\right) +\frac 12 \left|\int_{-\omega(x{+}t)}^{-\omega^k(x{+}t)}\!\!\!\!\!\!\!\!\!\!\!\!\!\!\!\!(\vu(s)-\vzd(s))\d s\right|^2\\
		&\leq \eps^k+\frac 12 |\omega^k(x{+}t)-\omega(x{+}t)|\left|\int_{-\omega(x{+}t)}^{-\omega^k(x{+}t)}(\vu(s)-\vzd(s))^2\d s\right|.
	\end{align*}
	\endgroup
	Using \eqref{supa} we then deduce that for $(t,x)\in Q^k_4$ the following estimate holds true:
	\begin{equation*}
		|\Ak(t,x)-A(t,x)|^2\leq \eps^k+\max\limits_{y\in D^k_\varphi(0,T)}|\lambdak(y)-\lambda(y)|\left|\int_{-\omega(x{+}t)}^{-\omega^k(x{+}t)}(\vu(s)-\vzd(s))^2\d s\right|.
	\end{equation*}
	From this inequality, reasoning as in the proof of Proposition~4.5 in \cite{RivNar}, we conclude that: 
	\begin{equation*}
		\iint_{Q^k_4}|\Ak(t,x)-A(t,x)|^2\d x\d t\leq \eps^k+C|D^k_\varphi(0,T)|\max\limits_{y\in D^k_\varphi(0,T)}|\lambdak(y)-\lambda(y)|^2.
	\end{equation*}
	Putting all the previous estimates together we deduce:
	\begin{equation}\label{secondest}
	\begin{aligned}
	\Vert \Ak-A\Vert^2_{L^2(Q^k)}&\leq \eps^k+C|D^k_\varphi(0,T)|\max\limits_{y\in D^k_\varphi(0,T)}|\lambdak(y)-\lambda(y)|^2\\
	&\leq \eps^k+C|D^k_\varphi(0,T)|\d\left((v^k,\lambdak),(v,\lambda)\right)^2.
	\end{aligned}
	\end{equation}
	Now we estimate $\Vert \Hk-H\Vert_{L^2(Q^k)}$. As in \eqref{splitA} we split its square into six integrals  and we estimate all of them. With the same argument used before we deduce the integral over $Q^k_2\cup Q^k_3$ goes to zero as $k\to+\infty$, while the integral over $Q^k_1$ is trivially bounded by $C|D^k_\varphi(0,T)|\Vert \vk-v\Vert^2_{L^2(Q^k)}$. More work is needed to treat the other three integrals. Exploiting Proposition~\ref{bdd} we estimate the integrals over $Q^k_5$ and $Q^k_6$ together:
	\begingroup
	\allowdisplaybreaks
	\begin{align*}
		&\quad\,\iint_{Q_5^k}\Hk(t,x)^2\d x\d t+\iint_{Q_6^k}H(t,x)^2\d x\d t\\
		&\leq C\left(\iint_{Q_5^k}|R^k(t,x)|^2\d x\d t+\iint_{Q_6^k}|R(t,x)|^2\d x\d t\right)\\
		&\leq C\left(\iint_{Q_5^k}|(t{-}x)-\omega^k(x{+}t)|^2\d x\d t+\iint_{Q_6^k}|(t{-}x)-\omega(x{+}t)|^2\d x\d t\right).
	\end{align*} 
	\endgroup
	So, using \eqref{supb} and \eqref{supc} we deduce:
	\begin{equation*}
	\iint_{Q_5^k}\Hk(t,x)^2\d x\d t+\iint_{Q_6^k}H(t,x)^2\d x\d t\leq C|D^k_\varphi(0,T)|\max\limits_{y\in D^k_\varphi(0,T)}|\lambdak(y)-\lambda(y)|^2.
	\end{equation*} 
	For the integral over $Q^k_4$ we use \eqref{supa} and we reason as follows:
	\begingroup
	\allowdisplaybreaks
	\begin{align*}
		&\quad\,\iint_{Q_4^k}|\Hk(t,x)-H(t,x)|^2\d x\d t\\
		&\leq \iint_{Q_4^k}\left(\iint_{R^k(t,x)}|\vk(\tau,\sigma)-v(\tau,\sigma)|\d\sigma\d\tau+\iint_{R^k(t,x)\Delta R(t,x)}\!\!\!\!\!\!\!\!\!\!\!\!|v(\tau,\sigma)|\d\sigma\d\tau\right)^2\d x\d t\\
		&\leq C\iint_{Q_4^k}\left(|R^k(t,x)|\Vert\vk-v\Vert^2_{L^2(Q^k)}+|R^k(t,x)\Delta R(t,x)|^2\right)\d x\d t\\
		&\leq C\left(|D^k_\varphi(0,T)|\Vert\vk-v\Vert^2_{L^2(Q^k)}+\iint_{Q_4^k}|\omega^k(x{+}t)-\omega(x{+}t)|^2\d x\d t\right)\\
		&\leq C|D^k_\varphi(0,T)|\left(\Vert\vk-v\Vert^2_{L^2(Q^k)}+\max\limits_{y\in D^k_\varphi(0,T)}|\lambdak(y)-\lambda(y)|^2\right).
	\end{align*}
	\endgroup
	Putting together the previous estimates we conclude that: 
	\begin{equation}\label{thirdest}
	\begin{aligned}
	\Vert \Hk-H\Vert^2_{L^2(Q^k)}&\leq \eps^k+C|D^k_\varphi(0,T)|\left(\Vert\vk-v\Vert^2_{L^2(Q^k)}+\max\limits_{y\in D^k_\varphi(0,T)}|\lambdak(y)-\lambda(y)|^2\right)\\
	&\leq \eps^k+C|D^k_\varphi(0,T)|\d\left((v^k,\lambdak),(v,\lambda)\right)^2,
	\end{aligned}
	\end{equation}
	and so by \eqref{firstest}, \eqref{secondest} and \eqref{thirdest} the Proposition~is proved.
	\end{proof}\noindent
 	Putting together \eqref{estimatel} and \eqref{estimatev} we deduce that there exists a constant $\overline C\geq 0$ independent of $k$ such that for every $k$ large enough it holds:
 	\begin{equation}\label{estimated}
 	\d\left((v^k,\lambdak),(v,\lambda)\right)\leq \eps^k+\overline C\max\left\{\eta^k, |D^k_\varphi(0,T)| \right\}\d\left((v^k,\lambdak),(v,\lambda)\right).
 	\end{equation}
 	By \eqref{estimated} we are able to improve Lemma~\ref{nonempty}:
 	\begin{lemma}\label{newlemma}
 			Assume \eqref{conv} and let $T$ be as in Proposition~\ref{bdd}. Then there exist $\delta>0$ and $\widetilde{K}\in\enne$ such that for every $k\geq \widetilde{K}$ the nondegenerate closed interval $J^k_\delta=[-(\ellzk\wedge\ellz),-\ellz+\delta]$ is contained in $D^k_\varphi(0,T)$.
 \end{lemma}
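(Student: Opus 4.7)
The plan is to argue by contradiction: I would suppose the claim fails and, passing to a subsequence (not relabelled), assume that the right endpoint $\varphi(T)\wedge\varphik(T)$ of $D^k_\varphi(0,T)$ converges to $-\ellz$ as $k\to+\infty$. Since $\dot\varphi>0$ a.e. on $[0,+\infty)$ and $\varphi(0)=-\ellz$, the fixed value $\varphi(T)+\ellz$ is strictly positive, so necessarily $\varphik(T)\to-\ellz$, and for $k$ large the right endpoint of $D^k_\varphi(0,T)$ equals $\varphik(T)$. Combined with $\ellzk\to\ellz$, this forces $|D^k_\varphi(0,T)|\to 0$.

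The next step is to verify $\eta^k\to 0$. From \eqref{convdata} it follows that $j^k\to j$ in $L^2(-\infty,0)$, hence
\begin{equation*}
\Vert j^k\Vert_{L^2(D^k_\varphi(0,T))}\le\Vert j^k-j\Vert_{L^2(-\infty,0)}+\Vert j\Vert_{L^2(D^k_\varphi(0,T))},
\end{equation*}
where both summands vanish, the first by $L^2$-convergence and the second by absolute continuity of the integral (since $|D^k_\varphi(0,T)|\to 0$ and $j\in L^2(-\infty,0)$). The same reasoning controls the remaining two summands in \eqref{etak}. Therefore $\max\{\eta^k,|D^k_\varphi(0,T)|\}\to 0$, and for $k$ large enough the factor $\overline C\max\{\eta^k,|D^k_\varphi(0,T)|\}$ is at most $1/2$; absorbing the last term of \eqref{estimated} into the left-hand side then gives $\d((\vk,\lambdak),(v,\lambda))\le 2\eps^k$, so in particular $\max\limits_{y\in D^k_\varphi(0,T)}|\lambdak(y)-\lambda(y)|\to 0$.

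To close the argument, I use that $\lambda=\varphi^{-1}$ is continuous with $\lambda(-\ellz)=0$: since $D^k_\varphi(0,T)$ collapses to the singleton $\{-\ellz\}$, this gives $\max\limits_{y\in D^k_\varphi(0,T)}\lambda(y)\to 0$. Combining with the previous step yields $\lambdak(\varphik(T))\to 0$, contradicting the defining identity $\lambdak(\varphik(T))=T>0$. I expect the main obstacle to be verifying that $\eta^k$ vanishes together with $|D^k_\varphi(0,T)|$, since this is precisely what allows the contraction-type inequality \eqref{estimated} to close the argument in the degenerate regime.
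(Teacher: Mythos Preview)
Your argument is correct and follows essentially the same contradiction scheme as the paper: assume along a subsequence that $\varphi^k(T)\to-\ellz$, deduce $|D^k_\varphi(0,T)|\to 0$ and $\eta^k\to 0$, feed this into \eqref{estimated} to obtain $\d((v^k,\lambdak),(v,\lambda))\to 0$, and then contradict $\lambdak(\varphik(T))=T$ via continuity of $\lambda$ at $-\ellz$. The only differences are cosmetic --- you spell out the $\eta^k\to 0$ step and split the final contradiction into two sub-steps, whereas the paper evaluates $|\lambdak(\varphik(T))-\lambda(\varphik(T))|$ directly.
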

\begin{proof}
	Assume by contradiction that there exists a subsequence (not relabelled) such that $D^k_\varphi(0,T)$ goes to the empty set when $k\to +\infty$, namely $\lim\limits_{k\to +\infty}\varphik(T)=-\ellz$. By \eqref{etak} we in particular deduce that $\eta^k\to 0$ as $k\to +\infty$.\par
	By \eqref{estimated} we thus infer $\lim\limits_{k\to +\infty}\d\left((v^k,\lambdak),(v,\lambda)\right)=0$, which in particular implies:
	\begin{equation*}
		\lim\limits_{k\to +\infty}\max\limits_{y\in [-(\ellzk\wedge\ellz),\varphik(T)]}|\lambdak(y)-\lambda(y)|=0.
	\end{equation*}
	This is absurd, indeed:
	\begin{equation*}
		\lim\limits_{k\to +\infty}|\lambdak(\varphik(T))-\lambda(\varphik(T))|=\lim\limits_{k\to +\infty}|T-\lambda(\varphik(T))|=|T-\lambda(-\ellz)|=T>0,
	\end{equation*}
	and we conclude.
\end{proof}
From this Lemma, repeating the proofs of Propositions~\ref{estimatelprop} and \ref{estimatevprop} we deduce that \eqref{estimated} still holds true replacing $D^k_\varphi(0,T) $ by $J_\delta^k$, replacing $\eta^k$ by $\eta^k_\delta:=\Vert j\Vert^2_{L^2(J^k_\delta)}+\Vert j^k\Vert_{L^2(J^k_\delta)}+\Vert j\Vert_{L^2(J^k_\delta)}$ and replacing $Q^k$ by $Q^k_\delta:=\left\{(t,x)\in \erre^2\mid t\in[0,T]\mbox{ and } x\in[t+\ellz-\delta,t+(\ellz\wedge\ellzk)] \right\}$. This means that, choosing $\delta$ small enough, for every $k$ large enough we have:
 	\begin{equation}\label{distdelta}
 	\d_\delta\left((v^k,\lambdak),(v,\lambda)\right)\leq \eps^k+\frac 12\d_\delta\left((v^k,\lambdak),(v,\lambda)\right),
 	\end{equation}
 	where the new distance $\d_\delta$ is simply as in \eqref{distance} replacing $D^k_\varphi(0,T) $ by $J_\delta^k$ and $Q^k$ by $Q^k_\delta$.
 	By \eqref{distdelta} we finally deduce that:
 	\begin{equation}\label{limddelta}
 		\lim\limits_{k\to +\infty}\d_\delta\left((v^k,\lambdak),(v,\lambda)\right)=0.
 	\end{equation}
 	Furthermore by \eqref{limddelta} we get:
 	\begin{equation}\label{w11est}
 		\lim\limits_{k\to +\infty}\int_{-(\ellzk\wedge\ellz)}^{-\ellz+\delta} |\lambdadk(y)-\lambdad(y)|\d y=0.
 	\end{equation}
 	To justify the validity of \eqref{w11est} we reason as follows: in the estimate \eqref{important} at the beginning of the proof of Proposition~\ref{estimatelprop} we can replace $\max\limits_{y\in J^k_\delta}|\lambdak(y)-\lambda(y)|$ by $\displaystyle \int_{-(\ellzk\wedge\ellz)}^{-\ellz+\delta} |\lambdadk(y)-\lambdad(y)|\d y$, obtaining that: 
 	\begin{equation*}
 		\int_{-(\ellzk\wedge\ellz)}^{-\ellz+\delta} |\lambdadk(y)-\lambdad(y)|\d y\leq \eps^k+C_1\eta^k_\delta\d_\delta\left((v^k,\lambdak),(v,\lambda)\right),
 	\end{equation*}
 	and so by \eqref{limddelta} we conclude the argument. This leads to the following Corollary:
 	\begin{cor}\label{time}
 		Assume \eqref{conv}. Then there exists a small time $\overline T>0$ such that $\elldk\to\elld$ in $L^1(0,\overline T)$.
 	\end{cor}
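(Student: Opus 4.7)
The strategy is to transfer the $W^{1,1}$-type convergence of $\lambda^k \to \lambda$ on $J^k_\delta$ (encoded by \eqref{limddelta} and \eqref{w11est}) to the desired $L^1$-convergence of $\dot\ell^k$ on a small time interval $[0, \overline T]$. This transfer exploits the inverse-function relationship $\lambda^k = (\varphi^k)^{-1}$ together with $\ell^k(t) = t - \varphi^k(t)$.

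First I would fix $\overline T > 0$ small enough that $\lambda(-\ellz + \delta) > \overline T$, which is possible by continuity of $\lambda$ and the identity $\lambda(-\ellz) = 0$. Combined with the uniform convergence $\lambda^k \to \lambda$ on $J^k_\delta$, already controlled through \eqref{limddelta}, this ensures that for all $k$ large the map $\lambda^k$ sends $J^k_\delta$ onto an interval containing $[0, \overline T]$. Consequently the change of variable $y = \varphi^k(t)$, $\mathrm{d} t = \dot\lambda^k(y)\, \mathrm{d} y$, is well-defined as a bijection from $[0, \overline T]$ onto $[-\ellzk,\varphi^k(\overline T)] \subseteq J^k_\delta$.

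Using $\dot\ell^k = 1 - \dot\varphi^k$ together with the identity $\dot\varphi^k(\lambda^k(y)) = 1/\dot\lambda^k(y)$, the change of variable yields
\begin{align*}
\int_0^{\overline T} |\dot\ell^k(t) - \dot\ell(t)|\, \d t &= \int_{-\ellzk}^{\varphi^k(\overline T)} |1 - \dot\varphi(\lambda^k(y))\, \dot\lambda^k(y)|\, \d y \\
&\leq \int_{J^k_\delta} |\dot\lambda^k(y) - \dot\lambda(y)|\, \d y + \int_{J^k_\delta} |\dot\varphi(\lambda(y)) - \dot\varphi(\lambda^k(y))|\, \dot\lambda(y)\, \d y,
\end{align*}
where the triangle inequality is applied after inserting the a.e.\ identity $\dot\varphi(\lambda(y))\dot\lambda(y) = 1$, and the pointwise bound $\dot\varphi \leq 1$ is used to drop a prefactor. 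The first integral on the right vanishes in the limit by \eqref{w11est}. For the second, I would change variables $s = \lambda(y)$ and run a density argument in the spirit of Lemma~\ref{composition}: approximate $\dot\varphi \in L^\infty \cap L^1$ by a continuous compactly supported $g_\eps$ with $\Vert \dot\varphi - g_\eps\Vert_{L^1}\le \eps$, exploit the uniform convergence $\lambda^k \circ \varphi \to \mathrm{id}$ (via Lemma~\ref{unifinv} applied to $\varphi^k \to \varphi$), and conclude via dominated convergence together with the arbitrariness of $\eps$.

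The main obstacle is the composition of the merely a.e.-defined $\dot\varphi$ with the perturbed sequence $\lambda^k$ appearing in the second integral, which rules out a direct pointwise-convergence argument. The density reduction to continuous functions, already central to Lemmas~\ref{omega} and \ref{composition}, is precisely the device that circumvents this difficulty, allowing the $W^{1,1}$-convergence of the inverses $\lambda^k$ to be pushed forward to $\dot\ell^k \to \dot\ell$ in $L^1(0,\overline T)$.
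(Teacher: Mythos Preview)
Your plan is close to the paper's but carried out in the dual variable: you change to $y=\varphi^k(t)$ first and then run the density argument on $\dot\varphi$, whereas the paper stays in the $t$--variable and approximates $\dot\lambda$. This dualisation, however, creates a gap in the step you gloss over as ``in the spirit of Lemma~\ref{composition}''. After the change $s=\lambda(y)$ your second integral becomes $\int |\dot\varphi(s)-\dot\varphi(h^k(s))|\,\d s$ with $h^k=\lambda^k\circ\varphi$, and the three--term density split leaves the tail term
\[
\int \bigl|(g_\eps-\dot\varphi)\bigl(h^k(s)\bigr)\bigr|\,\d s.
\]
To bound this by $C\eps$ uniformly in $k$ you would change variables $r=h^k(s)$; the Jacobian of the inverse is $\dot\lambda(\varphi^k(r))\,\dot\varphi^k(r)$, and while $\dot\varphi^k\le 1$, the factor $\dot\lambda$ is \emph{not} bounded (only $\dot\lambda\in L^1$). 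Lemma~\ref{composition} works precisely because the analogous Jacobian there satisfies $\dot\omega^k\le 1$; that bound is absent here, so the argument as sketched does not close.

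The paper avoids this by never multiplying out: it writes $|\dot\ell^k(s)-\dot\ell(s)|=\bigl|1/\dot\lambda^k((\lambda^k)^{-1}(s))-1/\dot\lambda(\lambda^{-1}(s))\bigr|$ and keeps both denominators in the integrand. Since $\dot\lambda,\dot\lambda^k\ge 1$, either factor can be dropped at will, and after the density split (approximating $\dot\lambda$ rather than $\dot\varphi$) each tail term retains exactly the factor that becomes the Jacobian under the matching change of variable, yielding a clean bound $\le\eps$. Your route can be rescued---for instance by writing $\d s=(h^k)'(s)\,\d s+(1-(h^k)'(s))\,\d s$ in the tail term, using the change of variable on the first piece and the bound $\|g_\eps-\dot\varphi\|_\infty\le 2$ together with $\int|1-(h^k)'|\,\d s=\int|\dot\lambda^k-\dot\lambda|\,\d y\to 0$ on the second---but this extra ingredient (the $W^{1,1}$, not merely uniform, convergence of $h^k$ combined with $\dot\varphi\in L^\infty$) is missing from your sketch. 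A minor additional point: when $\ell_0^k>\ell_0$ the image $[-\ell_0^k,\varphi^k(\overline T)]$ extends below $J^k_\delta$, so the inclusion you claim fails; the paper handles this by splitting off a short initial interval $[0,m^k]$ whose contribution is bounded by $2m^k\to 0$.
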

 	\begin{proof}
 		Let us take any $\overline T\in\big(0,\lambda(-\ellz+\delta)\big)$, where $\delta$ is given by Lemma~\ref{newlemma} and such that \eqref{w11est} holds true, and for the sake of clarity let us consider the value $m^k:=\lambdak\big({-}(\ellzk\wedge \ellz)\big)\vee\lambda\big({-}(\ellzk\wedge \ellz)\big)$. Then we have:
 		\begin{align*}
 			\Vert\elldk-\elld\Vert_{L^1(0,\overline T)}&=\int_{0}^{m^k}|\elldk(s)-\elld(s)|\d s+\int_{m^k}^{\overline T}|\elldk(s)-\elld(s)|\d s\\
 			&\leq 2m^k+\int_{m^k}^{\overline T}\left|\frac{1}{\lambdadk(\lambdak^{-1}(s))}-\frac{1}{\lambdad(\lambda^{-1}(s))}\right|\d s.
 		\end{align*}
 		By uniform convergence of $\lambdak$ to $\lambda$ and by \eqref{convlnu} the first term goes to zero as $k\to+\infty$, while for the second one, denoted by $I^k$, we estimate:
 		\begin{align*}
 			I^k&\leq \int_{m^k}^{\overline T}\left|\frac{\lambdadk(\lambdak^{-1}(s))-\lambdad(\lambdak^{-1}(s))}{\lambdadk(\lambdak^{-1}(s))}\right|\d s+\int_{m^k}^{\overline T}\left|\frac{\lambdad(\lambdak^{-1}(s))-\lambdad(\lambda^{-1}(s))}{\lambdadk(\lambdak^{-1}(s))\lambdad(\lambda^{-1}(s))}\right|\d s\\
 			&\leq \int_{-(\ellzk\wedge\ellz)}^{-\ellz+\delta}\left|\lambdadk(y)-\lambdad(y)\right|\d y+\int_{m^k}^{\overline T}\left|\frac{\lambdad(\lambdak^{-1}(s))-\lambdad(\lambda^{-1}(s))}{\lambdadk(\lambdak^{-1}(s))\lambdad(\lambda^{-1}(s))}\right|\d s.
 		\end{align*}
 		By \eqref{w11est} the first term goes to zero as $k\to +\infty$; for the second one, denoted by $II^k$, we reason as follows: we fix $\eps>0$ and we take $f_\eps\in C^0([-\ell_0,-\ellz+\delta])$ such that $\Vert\lambdad-f_\eps\Vert_{L^1(-\ellz,-\ellz+\delta)}\leq \eps$. Then we estimate:
 		\begingroup
 		\allowdisplaybreaks
 		\begin{align*}
 			II^k&\leq \int_{m^k}^{\overline T}\left|\frac{\lambdad(\lambdak^{-1}(s))-f_\eps(\lambdak^{-1}(s))}{\lambdadk(\lambdak^{-1}(s))}\right|\d s+\int_{m^k}^{\overline T}|f_\eps(\lambdak^{-1}(s))-f_\eps(\lambda^{-1}(s))|\d s\\
 			&\quad+\int_{m^k}^{\overline T}\left|\frac{f_\eps(\lambda^{-1}(s))-\lambdad(\lambda^{-1}(s))}{\lambdad(\lambda^{-1}(s))}\right|\d s\\
 			&\leq2\int_{-\ellz}^{-\ellz+\delta}\left|\lambdad(y)-f_\eps(y)\right|\d y+\int_{m^k}^{\overline T}|f_\eps(\lambdak^{-1}(s))-f_\eps(\lambda^{-1}(s))|\d s\\\
 			&\leq 2\eps+\int_{m^k}^{\overline T}|f_\eps(\lambdak^{-1}(s))-f_\eps(\lambda^{-1}(s))|\d s.
 		\end{align*}
 		\endgroup
 		By Lemma~\ref{unifinv} and dominated convergence this last integral vanishes as $k\to+\infty$, hence by the arbitrariness of $\eps$ we conclude.
 	\end{proof}
	We are now in a position to state and prove the main result of the paper:
	\begin{thm}\label{finalthm}
		Assume \eqref{conv}. Then the sequence of pairs $\{(u^k,\ell^k)\}_{k\in\enne}$ converges to the solution of the limit problem $(u,\ell)$ in the following sense: for every $T>0$
		\begin{equation}\label{thesis}
		\begin{aligned}
		&\bullet\elldk\to\elld \mbox{ in } L^1(0,T), \mbox{ and thus }\ellk\to\ell\mbox{ uniformly in }[0,T];\\
		&\bullet u^k\to u\mbox{ uniformly in }[0,T]\times[0,+\infty);\\
		&\bullet u^k\to u\mbox{ in }H^1((0,T)\times(0,+\infty));\\
		&\bullet u^k\to u\mbox{ in }C^0([0,T];H^1(0,+\infty))\mbox{ and in } C^1([0,T];L^2(0,+\infty));\\
		&\bullet u^k_x(\cdot,0)\to u_x(\cdot,0)\mbox{ and }\sqrt{1-\elldk(\cdot)^2}u^k_x(\cdot,\ellk(\cdot))\to \sqrt{1-\elld(\cdot)^2}u_x(\cdot,\ell(\cdot))\mbox{ in }L^2(0,T).
		\end{aligned}
		\end{equation}		
	\end{thm}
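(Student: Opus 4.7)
The plan is to bootstrap from the small-time convergence already established in Corollary~\ref{time} to arbitrary time $T>0$ via a finite iteration. The first step is to combine Corollary~\ref{time} with the results of Section~\ref{sec2}: once we know $\elldk\to\elld$ in $L^1(0,\overline T)$, Propositions~\ref{uniform}, \ref{sobolev}, \ref{Cone} and \ref{boundary} immediately give all the convergences listed in \eqref{allconv} for $\vk$ on $[0,\overline T]$, and Remark~\ref{auxiliar} then transfers them to $u^k$ via the identity $u^k(t,x)=e^{-\nuk t/2}\vk(t,x)$ together with \eqref{convlnu}. This yields the full statement of the Theorem on the small interval $[0,\overline T]$.

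To reach an arbitrary $T$, I would iterate the argument by treating the state of the system at time $\overline T$ as new initial data. Concretely, I would set $t_0:=0$ and define recursively $t_{i+1}:=t_i+\overline T_i$, where $\overline T_i>0$ is the small time provided by Corollary~\ref{time} applied to the problem shifted to the initial time $t_i$. The shifted initial data are $\ell^k(t_i)\to\ell(t_i)$, $u^k(t_i,\cdot)\to u(t_i,\cdot)$ in $H^1(0,+\infty)$, $u^k_t(t_i,\cdot)\to u_t(t_i,\cdot)$ in $L^2(0,+\infty)$, which follow from the convergences \eqref{thesis} already established on $[0,t_i]$ by the inductive hypothesis; the boundary datum $w^k$ and the toughness $\kappa^k$ remain convergent in the senses of \eqref{convdata} and \eqref{convkappa} (on shifted intervals), so the hypotheses \eqref{conv} are inherited at each step. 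Applying Corollary~\ref{time} at step $i$ gives $\elldk\to\elld$ in $L^1(t_i,t_{i+1})$ and hence, again via Section~\ref{sec2} and Remark~\ref{auxiliar}, all the convergences \eqref{thesis} on $[t_i,t_{i+1}]$; concatenating them with those on $[0,t_i]$ produces \eqref{thesis} on $[0,t_{i+1}]$.

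The main obstacle is showing that this iteration reaches $T$ in finitely many steps, i.e.\ that the step sizes $\overline T_i$ admit a uniform positive lower bound on any compact interval. Tracing through the proofs, $\overline T_i$ depends on the analogue of the bound $\min\{\ellz/2,\,2/(N^2\ellz)\}$ appearing in Proposition~\ref{bdd}, on the smallness parameter $\delta$ produced in Lemma~\ref{newlemma}, and on uniform equiboundedness of the natural norms of the new initial data and of $\kappak$ on suitable compact sets. Since $\ellk(t_i)\to\ell(t_i)>0$, $\nuk\to\nu$, and the $H^1\times L^2\times \widetilde H^1\times \widetilde C^{0,1}$ convergences of the data give equiboundedness in the relevant norms on any compact subinterval of $[0,T]$, each of these quantities stays uniformly controlled as long as $t_i$ remains in $[0,T]$. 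This yields a uniform lower bound $\overline T_i\ge \tau_T>0$ depending only on $T$, so the iteration terminates after at most $\lceil T/\tau_T\rceil$ steps.

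Finally, the last two convergences in \eqref{thesis} (the boundary ones in $L^2$) are obtained on each subinterval $[t_i,t_{i+1}]$ via Proposition~\ref{boundary} applied to $\vk$, and transferred to $u^k$ by differentiating $u^k=e^{-\nuk t/2}\vk$ in $x$ (only a multiplicative time factor appears, which converges uniformly on bounded intervals thanks to \eqref{convlnu}); concatenation yields convergence in $L^2(0,T)$. Putting all subintervals together and using the compatibility of the convergences at the junction points $t_i$ produces the full statement \eqref{thesis} for the given $T$.
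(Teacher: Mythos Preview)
Your plan follows the same overall architecture as the paper: obtain all the convergences on a short interval by combining Corollary~\ref{time} with the results of Section~\ref{sec2} and Remark~\ref{auxiliar}, then extend to arbitrary $T$ by restarting the problem with the state at the endpoint as new initial data. The difference lies in how the extension is carried out. The paper defines
\[
T^*:=\sup\{\overline T>0\mid (\vk,\ellk)\to(v,\ell)\text{ in the sense of \eqref{thesis} on }[0,\overline T]\},
\]
shows that the supremum is actually attained (using only $|\elldk|<1$ to pass $L^1$-convergence of $\elldk$ up to $T^*$, and then the propositions of Section~\ref{sec2}), and then applies the short-time argument once more starting from $T^*$ to reach a contradiction. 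In particular, the paper never needs a uniform lower bound on the step sizes.

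Your proposal instead asserts such a uniform lower bound $\overline T_i\ge\tau_T>0$. This is the weak point. Tracing the origin of $\overline T$ back through Corollary~\ref{time}, one sees it depends on the parameter $\delta$ produced by Lemma~\ref{newlemma}, and the subsequent requirement that in \eqref{distdelta} one has $\overline C\max\{\eta^k_\delta,|J^k_\delta|\}\le\frac12$. The quantity $\eta^k_\delta$ involves $\|j\|_{L^2(J^k_\delta)}$ and $\|j^k\|_{L^2(J^k_\delta)}$, i.e.\ the $L^2$-mass of $\dot v_0,\,v_1$ (and their $k$-analogues) on an interval of length $\sim\delta$ adjacent to the new initial debonding front. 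Equiboundedness of the data in $H^1\times L^2$ on $[0,T]$ gives only a bound on the total $L^2$-norm, not a uniform modulus of absolute continuity across the different restart times $t_i$; so the claim that ``each of these quantities stays uniformly controlled'' does not follow from what you wrote. Moreover, $\delta$ in Lemma~\ref{newlemma} is obtained by contradiction and is not expressed in terms of uniform quantities. As stated, your step-size bound is an unproven assertion, and it is precisely the point the paper's supremum argument is designed to avoid. Replacing your finite-iteration step by the $T^*$ argument (show $T^*$ is a maximum, then restart once) closes the gap with no additional estimates.
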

	\begin{proof}
		As already remarked previously it is enough to prove that \eqref{thesis} holds true for the sequence of auxiliary functions $\vk(t,x)=e^{\nuk t/2}u^k(t,x)$. By Corollary \ref{time} and by the results presented in Section \ref{sec2} we know there exists a small time $\overline T>0$ such that all the convergences in \eqref{thesis} hold true in $[0,\overline T]$ for the sequence of pairs $\{(\vk,\ellk)\}_{k\in\enne}$. So we can consider:
		\begin{equation*}
			T^*:=\sup\{\overline T>0\mid (\vk,\ellk)\to(v,\ell)\mbox{ in the sense of }\eqref{thesis}\mbox{ in }[0,\overline T]\}.
		\end{equation*}
		If $T^*=+\infty$ we conclude. So let us argue by contradiction assuming that $T^*$ is finite. This means there exists an increasing sequence of times $\{T^j\}_{j\in\enne}$ converging to $T^*$ and for which $(\vk,\ellk)\to(v,\ell)$ in the sense of \eqref{thesis} in $[0,T^j]$ for every $j\in\enne$. Since $\elldk\to\elld$ in $L^1(0,T^j)$ for every $j\in\enne$ and $\elldk(t)<1$ and $\elld(t)<1$ for a.e. $t>0$ it follows that $\elldk\to\elld$ in $L^1(0,T^*)$ and hence $\ellk$ uniformly converges to $\ell$ in $[0,T^*]$ by \eqref{convlnu}. Moreover, reasoning as in Section \ref{sec2} we also get that $\vk\to v$ in the sense of \eqref{thesis} in the whole time interval $[0,T^*]$, and hence $T^*$ is a maximum. Now we can repeat the proofs of Propositions~\ref{estimatelprop} and \ref{estimatevprop} starting from time $T^*$(notice that by \eqref{thesis} the convergence hypothesis \eqref{convdata} is fulfilled by $u^k(T^*,\cdot)$ and $u^k_t(T^*,\cdot)$, while \eqref{convlnu} is replaced by $\ellk(T^*)\to\ell(T^*)$) deducing the existence of a time $\hat{T}>T^*$ for which \eqref{thesis} holds true. This is absurd being $T^*$ the supremum, so we conclude.
	\end{proof}
	\begin{rmk}
		Since $\elldk(t)<1$ for a.e. $t\in[0,+\infty)$, by \eqref{thesis} we actually deduce that for every $p\geq 1$ it holds $\elldk\to\elld$ in $L^p(0,T)$ for every $T>0$. However this convergence cannot be improved to the case $p=+\infty$. Indeed let us consider $\ellzk=\ellz=1$, $\nuk=\nu=2$, $w^k\equiv w\equiv 0$ in $[0,+\infty)$, $\kappak\equiv\kappa\equiv 1/2$ in $[\ellz,+\infty)$, $u_0^k\equiv u_0\equiv u_1\equiv 0$ and $u_1^k(x)=3\chi_{[1-1/k,1]} (x)$ in $[0,1]$, so that $u_1^k\to 0$ in $L^2(0,1)$ but not in $L^\infty(0,1)$. Under these assumptions we have $(v,\ell)\equiv(0,1)$, so by Theorem~\ref{finalthm} we know that $\vk\to 0$ uniformly in $[0,T]\times[0,+\infty)$ for every $T>0$. This means that for every $k$ large enough there exists a small time $T_k>0$ such that for a.e. $t\in(0,T_k)$ we have:
		\begingroup
		\allowdisplaybreaks
		\begin{align*}
			\elldk(t)&=\max\left\{0,\frac{\left[u_1^k(\ellk(t){-}t))+\int_{0}^{t}\vk(\tau,\tau{-}t{+}\ellk(t))\d\tau\right]^2-e^{2t}}{\left[u_1^k(\ellk(t){-}t))+\int_{0}^{t}\vk(\tau,\tau{-}t{+}\ellk(t))\d\tau\right]^2+e^{2t}} \right\}\\
			&=\max\left\{0,\frac{\left[3+\int_{0}^{t}\vk(\tau,\tau{-}t{+}\ellk(t))\d\tau\right]^2-e^{2t}}{\left[3+\int_{0}^{t}\vk(\tau,\tau{-}t{+}\ellk(t))\d\tau\right]^2+e^{2t}} \right\}\\
			&\geq\frac{[3-1]^2-e}{[3+1]^2+e}=\frac{4-e}{16+e}>0,
		\end{align*}
		\endgroup
		and so $\elldk$ does not converge to $\elld\equiv 0$ in $L^\infty(0,T)$ for any $T>0$.
	\end{rmk}
	\begin{rmk}[\textbf{Presence of a forcing term}]
		If in the debonding model we take into account the presence of an external force $f$, then the equation the vertical displacement $u$ has to satisfy becomes:
		\begin{equation*}
			u_{tt}(t,x)-u_{xx}(t,x)+\nu u_t(t,x)=f(t,x), \quad t > 0 \,,\, 0<x<\ell(t),
		\end{equation*}
		while the energy-dissipation balance reads as:
		\begin{equation*}
			\mc E(t)+\mc A(t)+\int_{\ell_0}^{\ell(t)} \kappa(x) \d x=\mc E(0)+\mc W(t)+\mc F(t),\quad\quad\text{for every }t\in[0,+\infty),
		\end{equation*}
		where $\mc F(t):=\displaystyle \int_{0}^{t}\int_{0}^{\ell(\tau)}f(\tau,\sigma)u_t(\tau,\sigma)\d \sigma\d\tau$. In \cite{RivNar}, Remark~4.12, the authors proved that if the forcing term satisfies:
		\begin{equation}\label{forcing}
			f\in L^\infty_{\textnormal{loc}}((0,+\infty)^2)\quad\text{such that}\quad f\in L^\infty((0,T)^2)\quad\text{for every }T>0,
		\end{equation} 
		then Theorem~\ref{exuniq} still holds true, namely the coupled problem admits a unique solution $(u,\ell)$.\par 
		If now we consider, besides all the assumptions given in Subsection \ref{hypotheses}, a sequence of functions $\{f^k\}_{k\in\enne}$ satisfying \eqref{forcing} and we assume that:
		\begin{equation}\label{convf}
			f^k\to f\quad\mbox{in } L^\infty((0,T)^2), \quad \mbox{for every }T>0,
		\end{equation}
		then we can repeat all the proofs of the paper, obtaining even in this case the continuous dependence result \eqref{thesis} stated in Theorem~\ref{finalthm}. Indeed in this case the representation formula for the auxiliary function $\vk$, fixed $T<\frac\ellz 2$, reads as:
		\begin{equation}\label{new1}
			\vk(t,x)=\Ak(t,x)+\frac{(\nuk)^2}{8}\Hk(t,x)+\frac 12\iint_{R^k(t,x)}g^k(\tau,\sigma)\d\sigma\d\tau, \quad\mbox{ for every }(t,x)\in \overline{\Omega^k_T},
		\end{equation}	
		where $g^k(t,x):=e^{\nuk t/2}f^k(t,x)$. As a byproduct we obtain that for a.e. $y\in [-\ellzk,\varphik(T)]$ the function $\Theta_{\vk,\lambdak}^k$ introduced in \eqref{Lambdak} becomes:
		\begin{equation}\label{new2}
		\Theta_{\vk,\lambdak}^k(y)=\frac{\left[\vzdk(-y)-\vuk(-y)-\frac{(\nuk)^2}{4}\int_{0}^{\lambdak(y)}\vk(\tau,\tau{-}y)\d\tau-\int_{0}^{\lambdak(y)}g^k(\tau,\tau{-}y)\d\tau\right]^2}{2e^{\nuk\lambdak(y)}\kappak(\lambdak(y){-}y)}.
		\end{equation} 
		Using \eqref{new1}, \eqref{new2} and exploiting \eqref{convf} one can perform again the proofs of Sections \ref{sec2} and \ref{sec3}, concluding that Theorem~\ref{finalthm} still holds true even in this case.
	\end{rmk}

	\bigskip
	
	\noindent\textbf{Acknowledgements.}
	The author wishes to thank Prof. Gianni Dal Maso for many helpful discussions on the topic. The author is a member of the Gruppo Nazionale per l'Analisi Matematica, la Probabilit\`a e le loro Applicazioni (GNAMPA) of the Istituto Nazionale di Alta Matematica (INdAM).

	\bigskip
	
	\bibliographystyle{siam}

	{\small
		\vspace{15pt} (Filippo Riva) SISSA, \textsc{Via Bonomea, 265, 34136, Trieste, Italy}
		\par 
		\textit{e-mail address}: \textsf{firiva@sissa.it}
		\par
	}

\end{document}